\renewcommand\@biblabel[1]{}
\numberwithin{equation}{section}
\newcommand{\beq}{\begin{equation}}
\newcommand{\eeq}{\end{equation}}
\newcommand{\beqs}{\begin{eqnarray*}}
\newcommand{\eeqs}{\end{eqnarray*}}
\newcommand{\beqn}{\begin{eqnarray}}
\newcommand{\eeqn}{\end{eqnarray}}
\newcommand{\beqa}{\begin{array}}
\newcommand{\eeqa}{\end{array}}
\DeclareMathOperator{\Ric}{Ric}
\def\lra{\longrightarrow}
\def\bc{\begin{center}}
\def\ec{\end{center}}
\def\RR{{\mathbb R}}
\def\begeq{\begin{equation}}
\def\endeq{\end{equation}}
\def\and{\quad{\rm and}\quad}
\let\lra=\longrightarrow
\def\mapright\#1{\,\smash{\mathop{\lra}\limits^{\#1}}\,}
\newtheorem{prop}{Proposition}[section]
\newtheorem{theo}[prop]{Theorem}
\newtheorem{lem}[prop]{Lemma}
\newtheorem{cor}[prop]{Corollary}
\newtheorem{defi}[prop]{Definition}
\begin{document}

\date{}

\author {Yuxing $\text{DENG}^*$ }

\thanks {* Supported by National Key R$\&$D Program of China 2022YFA1007600.}
\subjclass[2020]{Primary: 53E20; Secondary: 57R18,
 58J05}
\keywords { Ricci flow, Ricci soliton, orbifold}

\address{ Yuxing Deng\\School of Mathematics and Statistics, Beijing Institute of Technology,
Beijing, 100081, China\\
6120180026@bit.edu.cn}



\title[ Steady gradient Ricci solitons on orbifolds]{ Rigidity of positively curved Steady gradient Ricci solitons on orbifolds}

\section*{\ }

\begin{abstract} 
In this paper, we study gradient Ricci soitons on smooth orbifolds. We prove that the scalar curvature of a complete shrinking or steady gradient Ricci soliton on an orbifold is nonnegative. We also show that a complete $\kappa$-noncollapsed steady gradient Ricci soliton on a Riemannian orbifold with positive curvature operator, compact singularity and linear curvature decay must be a finite quotient of the Bryant soliton. Finally, we show that a complete steady gradient Ricci soliton on a Riemannian orbifold with positive sectional curvature must be a finite quotient of the Bryant soliton if it is asymptotically quotient cylindrical. 
\end{abstract}

\maketitle

\section{Introduction}
Steady gradient Ricci solitons are important singularity models of Type II singular solutions of the Ricci flow \cite{H2,Ham95}. A singularity model of a compact Ricci flow is always a $\kappa$-solution.  A well-known conjecture of Perelman says, 3-dimensional noncompact $\kappa$-solution must be a Ricci flow generated by the 3-dimensional Bryant soliton, which is a steady Ricci soliton (see \cite{Pe1}). This conjecture was solved by Simon Brendle (see \cite{Br1,Br3}), which is also the last missing piece of  the classification of 3-dimensional singularity models\cite{Br4}. Based on the rotationally symmetry of 3-dimensional $\kappa$-noncollapsed steady gradient Ricci solitons, Brendle proved the rotationally symmetry of 3-dimensional compact $\kappa$-solutions\cite{Br5}. Bamler and Kleiner also proved the symmetry independently\cite{BK}. Later on, Brendle, Daskalopoulos and Sesum even classfied the 3-dimensional compact $\kappa$-solutions\cite{BDS}. There are also many generalizations of these works in higher dimensions, see \cite{BDNS,BN} and references therein. 

In higher dimensions, the blow up limits of the Ricci flow can be ancient flows on orbifolds.  Appleton's work \cite{Appleton2}
shows  that flat $\mathbb{R}^4/\mathbb{Z}_2$ or
the $\mathbb{Z}_2$-quotient of the Bryant soliton may arise as the singularity models in dimension $4$. These two examples are both gradient Ricci solitons on orbifolds.  By Bamler's recent works \cite{Bam1,Bam2,Bam3}, one may expect singularity models of dimension $4$ should be ancient solutions of the Ricci flow on orbifolds with isolated singularities. So, it is interesting to study Ricci solitons on orbifolds in higher dimensions.

Recall orbifolds and Riemannian orbifolds are built on local models, see definition \ref{def-local model}, definition \ref{orbifold} and definition \ref{def-Riem orbifold}. One can also define smooth functions on orbifolds, see Definition \ref{smooth functions on orbifolds}. Then, we can define the gradient Ricci solitons on orbifold by the local model.

\begin{defi}\label{def-solitons on orbifolds}
Suppose $\mathcal{M}$ is a Riemannian orbifold equipped with metric $g$  and $f$ is a smooth function on $M$. Let $(U_{\alpha}, \hat{U}_{\alpha}, \pi_{\alpha}, \Gamma_{\alpha},\hat{g}_{\alpha})$ be an orbifold chart and there exists a smooth function $\hat{f}_{\alpha}$ defined on $\hat{U}_{\alpha}$ such that $\hat{f}_{\alpha} = f \circ \pi_{\alpha}$ by definition \ref{smooth functions on orbifolds}. Then, $(U_{\alpha},g,f)$ is called a \textbf{gradient Ricci soliton} in the chart $U_{\alpha}$ if 
\begin{align}
\hat{\rm Ric}_{\hat{g}_{\alpha}}+\frac{\lambda}{2}\hat{g}_{\alpha}={\rm Hess}_{\hat{g}_{\alpha}} \hat{f}_{\alpha}~\mbox{on}~ \hat{U}_{\alpha}.
\end{align}
$(\mathcal{M},g,f)$ is called  a \textbf{gradient Ricci soliton} if it is a gradient Ricci soliton in every chart $U_{\alpha}$.  The Ricci soliton is called shrinking, steady or expanding if $\lambda<,=,>0$ respectively.
\end{defi}

In \cite{zhang}, Zhang gave the lower bound estimate of the scalar curvature of a complete gradient Ricci soliton on smooth manifolds. In this paper, we generalize this estimate on orbifolds.

\begin{theo}\label{theo-R has lower bound}
  Suppose $(\mathcal{M},g,f)$  is a complete gradient Ricci soliton on an $n$-dimensional Riemannian orbifold.   If $\lambda\le0$, then $R(x)\ge 0$ for all $x\in M$. If $\lambda>0$, then $R\ge -C$ for some positive constant $C$. 
\end{theo}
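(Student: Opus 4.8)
The plan is to reduce matters to the argument of \cite{zhang} by working in the local uniformizing covers of $\mathcal M$. In a chart $(U_\alpha,\hat U_\alpha,\pi_\alpha,\Gamma_\alpha,\hat g_\alpha)$ the pair $(\hat g_\alpha,\hat f_\alpha)$ is, by the very definition of a gradient Ricci soliton on an orbifold, a genuine smooth gradient Ricci soliton on the manifold $\hat U_\alpha$, so the classical soliton identities all hold upstairs: tracing the soliton equation gives $\Delta\hat f_\alpha=\hat R+\tfrac{n\lambda}{2}$, the contracted second Bianchi identity gives $\nabla\hat R=-2\hat\Ric(\nabla\hat f_\alpha)$, and combining these yields first $\nabla\bigl(\hat R+|\nabla\hat f_\alpha|^2-\lambda\hat f_\alpha\bigr)=0$ and then, after one more divergence and Cauchy--Schwarz, the elliptic differential inequality
\begin{equation}\label{eq-plan-ell}
\Delta\hat R+\langle\nabla\hat f_\alpha,\nabla\hat R\rangle=-\lambda\hat R-2|\hat\Ric|^2\le-\lambda\hat R-\tfrac2n\hat R^2 .
\end{equation}
Every quantity here is $\Gamma_\alpha$-invariant, hence descends to the smooth orbifold functions $R$, $f$, $|\nabla f|^2$ on $\mathcal M$; since the charts are compatible and $\mathcal M$ is connected, the identities hold globally, and in particular $R+|\nabla f|^2-\lambda f\equiv C_0$ for one constant $C_0$. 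Abbreviating $\mathcal L u:=\Delta u+\langle\nabla f,\nabla u\rangle$ (the drift Laplacian $\Delta_{-f}$, self-adjoint for $e^{f}\,dV_g$), \eqref{eq-plan-ell} says $\mathcal L R\le-\lambda R-\tfrac2n R^2$ on $\mathcal M$.

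Next I would set $c:=0$ when $\lambda\le0$ and $c:=-\tfrac{n\lambda}{2}$ when $\lambda>0$, and examine the open set $\Omega:=\{R<c\}$. Writing the right side of \eqref{eq-plan-ell} as $-\tfrac2n R\bigl(R+\tfrac{n\lambda}{2}\bigr)$, one checks that $\mathcal L R<0$ throughout $\Omega$ (for $\lambda\le0$ since $-\lambda R\le0$ and $-\tfrac2n R^2<0$ on $\{R<0\}$; for $\lambda>0$ since $R<c<0$ makes both $R$ and $R+\tfrac{n\lambda}{2}$ negative), while $R=c$ on $\partial\Omega$. If $\mathcal M$ is compact this already concludes the proof: $R$ attains its minimum at some $x_0$, and lifting to a uniformizing chart about $x_0$---a smooth manifold, whether or not $x_0\in\mathrm{Sing}(\mathcal M)$---and evaluating at a preimage of $x_0$, where $\nabla R=0$ and $\Delta R\ge0$, gives $\mathcal L R(x_0)\ge0$, so $x_0\notin\Omega$ and $R\ge c$. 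In the noncompact case I would, following \cite{zhang}, use a weak (Omori--Yau type) maximum principle for $\mathcal L$ on the complete orbifold $\mathcal M$; that such a principle is available rests on the soliton structure, which makes the Bakry--Émery tensor $\Ric+\mathrm{Hess}(-f)=\Ric-\mathrm{Hess}f=-\tfrac\lambda2 g$ a constant multiple of $g$, hence bounded. It produces a sequence $x_k$ with $R(x_k)\to\inf_{\mathcal M}R$ and $\liminf_k\mathcal L R(x_k)\ge0$ whenever $\inf_{\mathcal M}R>-\infty$; since the right side of \eqref{eq-plan-ell} has a strictly negative limit along such a sequence when $-\infty<\inf_{\mathcal M}R<c$, this is impossible, and the quadratic term in \eqref{eq-plan-ell} (a Keller--Osserman type obstruction, also treated in \cite{zhang}) rules out $\inf_{\mathcal M}R=-\infty$. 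Therefore $R\ge c$; that is, $R\ge0$ when $\lambda\le0$ and $R\ge-\tfrac{n\lambda}{2}$ when $\lambda>0$, so one may take $C=\tfrac{n\lambda}{2}$.

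The computations of the first two paragraphs are local and algebraic and transplant to orbifolds verbatim, and the compact case needs nothing beyond lifting to a chart. The one place where the orbifold hypothesis genuinely has to be examined is the maximum principle at infinity on the possibly singular space $\mathcal M$, and I expect this to be the main---though not severe---obstacle. Concretely, one must verify that the Hessian and Laplacian comparison estimates feeding the Omori--Yau argument survive on $\mathcal M$; this is the case because $\mathrm{Sing}(\mathcal M)$ has codimension at least two, the distance function of the complete length space $\mathcal M$ is Lipschitz and smooth away from $\mathrm{Sing}(\mathcal M)$ and the cut locus, and all the relevant curvature and Laplacian inequalities are local statements valid on the smooth uniformizing covers; alternatively one may lift the data to the smooth complete orthonormal frame bundle of $\mathcal M$ and argue there. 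Everything else proceeds exactly as in \cite{zhang}.
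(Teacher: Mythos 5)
Your first two paragraphs are fine as far as they go: the identity $\Delta R+\langle\nabla f,\nabla R\rangle=-\lambda R-2|\Ric|^2\le-\lambda R-\tfrac2n R^2$ does hold in every uniformizing chart, descends to $\mathcal M$, and immediately gives the compact case with the constant $-\tfrac{n\lambda}{2}$, which matches the paper's bound. The genuine gap is the noncompact step, and it is exactly the step where the orbifold hypothesis bites. The paper does not use an Omori--Yau principle as a black box (nor does \cite{zhang}, whose Theorem 1.3 is proved by the localized cutoff argument $u=\psi(d(\cdot,p)/Ar_0)R$); the whole point of the paper's proof of Theorem \ref{theo-R has lower bound} is what to do when the minimum of the localized function sits at a singular point $x_1$ far from the basepoint. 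There the orbifold distance $d(\cdot,p)$ does not lift to the distance function of the local model, so one can neither differentiate $u$ upstairs nor apply the comparison estimate of Lemma \ref{lem-bound of laplacian d} in the chart. The paper resolves this by a surgery: it glues a tube around the minimizing geodesic from $p$ (with a small ball about $x_1$ removed) to the ball $\hat B(\hat x_1,\delta)$ in the uniformizing chart, producing a smooth, possibly incomplete soliton $(N,\bar g,\bar f)$ on which $\bar d(\cdot,p)\ge d(\pi(\cdot),p)$ with equality at $\hat x_1$, so the lifted function $\bar u$ still has an interior minimum there; it then applies Lemma \ref{lem-bound of laplacian d} (stated precisely for incomplete manifolds admitting a minimizing geodesic) and a Calabi-type basepoint shift when $\hat x_1$ lies in the cut locus of $p$ on $N$.

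Your proposal replaces all of this by the assertion that a weak/Omori--Yau maximum principle for $\mathcal L=\Delta+\langle\nabla f,\nabla\cdot\rangle$ is available on a complete Riemannian orbifold, and the justifications you offer do not hold up. The claim that $\mathrm{Sing}(\mathcal M)$ has codimension at least two is false for orbifolds: reflection strata have codimension one (e.g. $\mathbb R^2/\mathbb Z_2$ with $(x,y)\mapsto(-x,y)$), and Theorem \ref{theo-R has lower bound} carries no codimension hypothesis, so your argument would not even cover the stated generality. Lifting to the orthonormal frame bundle does not transport the soliton structure or the Bakry--\'Emery comparison geometry in any way you have made usable. Most importantly, the (almost-)minimum points produced by any such principle may themselves be singular, and density of $M_{reg}$ gives no second-order information at nearby regular points; handling precisely that configuration is the content of the paper's gluing construction, for which your plan supplies no substitute. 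So the strategy is right in spirit (a Zhang-type maximum principle applied to $R$), but the orbifold-specific core of the proof --- the barrier/comparison argument at a singular minimum away from the basepoint, including the cut-locus case --- is missing.
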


As an application of Theorem \ref{theo-R has lower bound}, we can introduce the corresponding Ricci flow on orbifolds. Suppose $(\mathcal{M},g,f)$  is a gradient Ricci soliton on an $n$-dimensional Riemannian orbifold. We denote the topological space of $\mathcal{M}$ by $M$.  Suppose $(\hat{U},\Gamma)$ is   a local model  around $p\in U$ along with $\hat{p}\in \hat{U}$ project to $p$. Let $\pi:\hat{U}\to U$ be the projection and $\hat{f}$ be the smooth function defined in . Let $\hat{\phi}_t$ be generated by $\hat{\nabla} \hat{f}$. By Lemma \ref{lem-regualar point stay regular} and Lemma \ref{lem-singular point stay in singular set}, if $|\nabla f|(p)\neq0$, then there is a constant $\varepsilon>0$ such that 
\begin{align}
\pi(\hat{\phi}_t(\hat{p}))\in M_{\mbox{reg}},~\forall~t\in(-\varepsilon,\varepsilon),~\forall~p\in M_{\mbox{reg}},
\end{align}
and
\begin{align}
\pi(\hat{\phi}_t(\hat{p}))\in M_{\mbox{sing}},~\forall~t\in(-\varepsilon,\varepsilon),~\forall~p\in M_{\mbox{sing}}.
\end{align}
Note that $\varepsilon$ may depend on the choice of point $p$.
Then, we can define $\phi_t(p)=\pi(\hat{\phi}_t(\hat{p}))$. See  Definition \ref{def-phi-local} and Definition \ref{def-phi} for the precise definition of $\phi_t$. When $(\mathcal{M},g,f)$ is complete, $\phi_t(p)$ exists for all $t\in(-\infty,+\infty)$ and for all $p\in M$. Moreover, we have

 \begin{theo}\label{theo-automorphism}
   If $(\mathcal{M},g,f)$  is a complete gradient Ricci soliton on an orbifold, then  $\phi_t$  is an automorphism of $\mathcal{M}$ for any $t\in\mathbb{R}$.
  \end{theo}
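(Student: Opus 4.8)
The plan is to reduce the statement to the orbifold charts, where $\phi_t$ is by construction covered by the genuine gradient flow $\hat\phi_t$ of the smooth vector field $\hat\nabla\hat f_\alpha$ on $\hat U_\alpha$ (Definitions \ref{def-phi-local} and \ref{def-phi}), and then to check three things: (a) $\hat\phi_t$, hence $\phi_t$, is defined for all $t\in\mathbb R$; (b) the chartwise flows patch to a global map $\phi_t\colon M\to M$; and (c) for each fixed $t$ this map is an orbifold diffeomorphism with inverse $\phi_{-t}$.

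Point (a) is where Theorem \ref{theo-R has lower bound} enters. In each chart the classical gradient Ricci soliton identity $R+|\hat\nabla\hat f_\alpha|^2-\lambda\hat f_\alpha\equiv\mathrm{const}$ holds, and since $R$, $|\nabla f|^2$ and $f$ descend to the connected orbifold $\mathcal M$, the constant is global. Together with $R\ge -C$ from Theorem \ref{theo-R has lower bound} and the standard at-most-quadratic bound $|f|(x)\le C_1 d_g(x,x_0)^2+C_2$ (whose proof via integration along minimizing geodesics carries over to complete orbifolds), this yields the linear estimate $|\nabla f|(x)\le a\,d_g(x,x_0)+b$; in the steady and shrinking cases $|\nabla f|$ is in fact bounded. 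A vector field of at most linear growth on a complete Riemannian orbifold has a complete flow --- Gr\"onwall's inequality applied to $r(t)=d_g(\hat\phi_t(\hat p),\hat x_0)$ confines an integral curve to a $d_g$-bounded, hence (by Hopf--Rinow for orbifolds) relatively compact set on each finite time interval, so it extends. Thus $\phi_t(p)$ is defined for all $t\in\mathbb R$ and all $p\in M$.

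For (b) and (c): since $\hat g_\alpha$ and $\hat f_\alpha=f\circ\pi_\alpha$ are $\Gamma_\alpha$-invariant, the vector field $\hat\nabla\hat f_\alpha$ is $\Gamma_\alpha$-invariant and its flow $\hat\phi_t$ commutes with $\Gamma_\alpha$; hence $\pi_\alpha(\hat\phi_t(\hat p))$ is independent of the lift $\hat p$ of $p$, and by uniqueness of ODE solutions the change-of-chart embeddings intertwine the local flows, so the chartwise definitions agree on overlaps and patch to $\phi_t\colon M\to M$ (first for small $|t|$, then for all $t$ via the group law $\phi_t=\phi_{t/N}\circ\cdots\circ\phi_{t/N}$ using (a)). For fixed $t$, $\hat\phi_t$ is a $\Gamma_\alpha$-equivariant diffeomorphism of $\hat U_\alpha$, so it carries $\mathrm{Stab}(\hat p)$ onto $\mathrm{Stab}(\hat\phi_t(\hat p))$ (indeed these coincide as subgroups of $\Gamma_\alpha$) and descends to a homeomorphism of $U_\alpha$ covered by the smooth equivariant map $\hat\phi_t$, i.e. an orbifold diffeomorphism; by Lemma \ref{lem-regualar point stay regular} and Lemma \ref{lem-singular point stay in singular set} it respects $M_{\mathrm{reg}}$ and $M_{\mathrm{sing}}$, and the local pieces are compatible by the patching above. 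Finally $\phi_0=\mathrm{id}$ and $\hat\phi_s\circ\hat\phi_t=\hat\phi_{s+t}$ give $\phi_s\circ\phi_t=\phi_{s+t}$, so $\phi_t$ is bijective with $\phi_t^{-1}=\phi_{-t}$ and $\phi_t\in\Aut(\mathcal M)$ for every $t$.

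The routine part is (b). The two steps needing care are (a) --- converting the curvature lower bound into completeness via the soliton identity, the quadratic bound on $f$, and a Hopf--Rinow statement on the orbifold --- and the verification in (c) that the equivariant lifts $\hat\phi_t$ genuinely assemble into an automorphism of the orbifold structure rather than merely a self-homeomorphism of $M$; I expect (c) to be the main conceptual obstacle.
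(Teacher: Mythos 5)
Your proposal is correct and follows the same overall skeleton as the paper (global existence from the curvature lower bound plus the soliton identity, chartwise definition and patching, then equivariant lifts descending to an automorphism with inverse $\phi_{-t}$), but it replaces the paper's key technical step by a cleaner one. Where you argue that $\hat f_\alpha=f\circ\pi_\alpha$ is $\Gamma_\alpha$-invariant and $\Gamma_\alpha$ acts by isometries, so that $\hat\nabla\hat f_\alpha$ is $\Gamma_\alpha$-invariant and its flow commutes with the group action, the paper instead proves the identity $\hat{\phi}_{t}(\hat{p})\cdot h=\hat{\phi}_{t}(\hat{p}\cdot h)$ through a case analysis (Lemma \ref{lem-commutative lemma} for orbits staying in $M_{\mathrm{reg}}$, Lemma \ref{lem-singular point stay in singular set}, and a limiting argument in Lemma \ref{lem-commutative lemma-singular} for orbits in $M_{\mathrm{sing}}$), and then deduces preservation of isotropy groups in Proposition \ref{prop-Gamma preserved}; your direct invariance argument yields all of this at once and is genuinely simpler, while the paper's route has the side benefit of isolating the regular/singular dynamics (Lemmas \ref{lem-regualar point stay regular}--\ref{lem-singular point stay in singular set}) that are reused elsewhere. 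Two points you label as routine do absorb real work in the paper and should not be skipped entirely: the independence of $\phi_t(p)$ from the chosen chain of charts and subdivision of $[0,t]$ is the content of Proposition \ref{prop-independent of charts} and the well-definedness theorem for Definition \ref{def-phi}, and your Gr\"onwall estimate on $d_g(\phi_t(p),x_0)$ presupposes that $t\mapsto\phi_t(p)$ is an admissible curve whose chartwise length controls orbifold distance (the paper's Lemma \ref{lem-admissable}, feeding into Lemma \ref{lem-completeness when R has lower bound}, which follows Chow's argument much as you do); with those supplied, your proof is complete, and your sign in the identity $R+|\nabla f|^2-\lambda f=\mathrm{const}$ is in fact the consistent one for the paper's normalization of the soliton equation.
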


By Theorem \ref{theo-automorphism} and Lemma \ref{lem-regualar point stay regular}, it is easy to check the following corollary.

\begin{cor}\label{cor-flow}
Let $(\mathcal{M},g,f)$  be a complete gradient Ricci soliton on an orbifold. Let $\phi_t$ be the map defined in Definition \ref{def-phi}   and $g(t)=\phi_{-t}^{\ast}g$. Then, in any orbifold chart $(U_{\alpha}, \hat{U}_{\alpha}, \pi_{\alpha}, \Gamma_{\alpha},\hat{g}_{\alpha}(t))$, the following holds
\begin{equation}
    \frac{\partial}{\partial t}\hat{g}_{\alpha}(t) = - 2 \hat{\rm Ric}_{\hat{g}_{\alpha}(t)}-\lambda \hat{g}_{\alpha}(t), ~\forall~t\in (-\infty,+\infty).
\end{equation}
Note that $(M_{\mbox{reg}},g,f)$ is an incomplete gradient Ricci soliton on manifold. On the regular locus $M_{\mbox{reg}}$, the  following holds
\begin{equation}
    \frac{\partial}{\partial t}g(t) = - 2 {\rm Ric}_{g(t)}-\lambda g(t), ~\forall~t\in (-\infty,+\infty).
\end{equation}

\end{cor}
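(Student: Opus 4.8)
The plan is to deduce the corollary from the classical construction of the self-similar Ricci flow generated by a gradient Ricci soliton, carried out on each smooth manifold $\hat{U}_{\alpha}$; the only extra ingredient is Theorem \ref{theo-automorphism}, which has already turned $\phi_t$ into a global automorphism for every $t\in\mathbb{R}$, so that $g(t)=\phi_{-t}^{\ast}g$ is a genuine Riemannian orbifold metric for all $t$, with chart representative $\hat{g}_{\alpha}(t)$. First I would record the local description of $\hat{g}_{\alpha}(t)$: since $\hat{f}_{\alpha}=f\circ\pi_{\alpha}$ is $\Gamma_{\alpha}$-invariant, the vector field $\hat{\nabla}\hat{f}_{\alpha}$ is $\Gamma_{\alpha}$-equivariant, so its local flow $\hat{\phi}_{\alpha,t}$ is $\Gamma_{\alpha}$-equivariant and descends to $\phi_t$ on $U_{\alpha}$, i.e.\ $\pi_{\alpha}\circ\hat{\phi}_{\alpha,t}=\phi_t\circ\pi_{\alpha}$ wherever both sides are defined --- this is exactly the construction behind Definitions \ref{def-phi-local} and \ref{def-phi}. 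Combined with $\hat{g}_{\alpha}=\pi_{\alpha}^{\ast}g$, this yields, near a fixed $\hat{q}\in\hat{U}_{\alpha}$ and for $t$ in a small interval about $0$, the identity $\hat{g}_{\alpha}(t)=\hat{\phi}_{\alpha,-t}^{\ast}\hat{g}_{\alpha}$.

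Next comes the computation at $t=0$. On the smooth manifold $\hat{U}_{\alpha}$ the soliton equation of Definition \ref{def-solitons on orbifolds} reads $2\hat{\rm Ric}_{\hat{g}_{\alpha}}+\lambda\hat{g}_{\alpha}=\mathcal{L}_{\hat{\nabla}\hat{f}_{\alpha}}\hat{g}_{\alpha}$. Since $t\mapsto\hat{\phi}_{\alpha,-t}$ is the flow of $-\hat{\nabla}\hat{f}_{\alpha}$, differentiating the identity $\hat{g}_{\alpha}(t)=\hat{\phi}_{\alpha,-t}^{\ast}\hat{g}_{\alpha}$ and using diffeomorphism invariance of the Ricci tensor gives, at $\hat{q}$,
\[
\left.\frac{\partial}{\partial t}\right|_{t=0}\hat{g}_{\alpha}(t)=-\mathcal{L}_{\hat{\nabla}\hat{f}_{\alpha}}\hat{g}_{\alpha}=-2\hat{\rm Ric}_{\hat{g}_{\alpha}(0)}-\lambda\hat{g}_{\alpha}(0).
\]

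To reach an arbitrary time $t_0$ I would use the group law $\phi_{-t}=\phi_{-t_0}\circ\phi_{-(t-t_0)}$, which gives $g(t)=\phi_{-(t-t_0)}^{\ast}g(t_0)$, i.e.\ $\phi_{-s}^{\ast}g(t_0)=g(t_0+s)$. The metric $g(t_0)$ is again a complete Riemannian orbifold metric, and $(\mathcal{M},g(t_0),f\circ\phi_{-t_0})$ is again a complete gradient Ricci soliton with the same $\lambda$: pulling the soliton equation on $\hat{U}_{\alpha}$ back by $\hat{\phi}_{\alpha,-t_0}$ and using $\hat{f}_{\alpha}\circ\hat{\phi}_{\alpha,-t_0}=(f\circ\phi_{-t_0})\circ\pi_{\alpha}$ produces the soliton equation for $\hat{g}_{\alpha}(t_0)$. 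Because the $\phi_s$ commute, the automorphism flow attached to this new soliton by Theorem \ref{theo-automorphism} is again $\phi_s$, so applying the $t=0$ computation to $(\mathcal{M},g(t_0),f\circ\phi_{-t_0})$ and setting $s=t-t_0$ gives $\frac{\partial}{\partial t}\hat{g}_{\alpha}(t)\big|_{t=t_0}=-2\hat{\rm Ric}_{\hat{g}_{\alpha}(t_0)}-\lambda\hat{g}_{\alpha}(t_0)$ for every $t_0\in\mathbb{R}$, which is the first displayed equation of the corollary.

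Finally, over $M_{\mbox{reg}}$ the orbifold charts are honest open embeddings and, by Lemma \ref{lem-regualar point stay regular}, $\phi_t$ restricts to a genuine one-parameter family of diffeomorphisms there; transporting the chart identity through $\pi_{\alpha}$ (equivalently, running the same argument directly on the smooth incomplete manifold $(M_{\mbox{reg}},g)$) gives the second displayed equation. I expect the only delicate point to be the chart bookkeeping in the first paragraph: a lifted trajectory need not stay inside a fixed chart, so $\hat{g}_{\alpha}(t)=\hat{\phi}_{\alpha,-t}^{\ast}\hat{g}_{\alpha}$ is only a short-time identity, which is precisely why I route the argument through the $t=0$ case and the group law rather than working at a general time directly. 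Everything else is the routine self-similarity computation, now fully licensed because Theorem \ref{theo-automorphism} has established that $\phi_t$ is an automorphism.
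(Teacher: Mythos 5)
Your proposal is correct and is exactly the routine self-similarity check the paper has in mind: the paper states Corollary \ref{cor-flow} as an easy consequence of Theorem \ref{theo-automorphism} and Lemma \ref{lem-regualar point stay regular} without writing out details, and your argument (equivariant local lifts $\hat{\phi}_{\alpha,t}$ descending to $\phi_t$, the Lie-derivative form of the soliton equation at $t=0$, then the group law of $\phi_t$ to reach arbitrary $t_0$, with the regular-locus case handled by the same computation on the smooth part) supplies precisely those details with the correct sign conventions for $g(t)=\phi_{-t}^{\ast}g$. Your handling of the chart bookkeeping (working only for short time in a fixed chart and propagating via the group law) is the right way to make the "easy check" rigorous.
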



Since we have introduced the Ricci flow on orbifolds for Ricci solitons, we may use the Ricci flow to study Ricci solitons on orbifolds. Brendle  shows that  steady Ricci solitons with positive sectional curvature  must be isometric to the Bryant soliton if it is asymptotically cylindrical \cite{Br2}.  Xiaohua Zhu and the author have also proved a rigidity result for positively curved $\kappa$-noncollpased steady Ricci solitons with linear curvature decay \cite{DZ5}. It is interesting to study whether these rults remains true on steady Ricci solions on orbifolds. Obviously, a $\mathbb{Z}_k$-quotient of the Bryant soliton satisfies our definition of gradient Ricci solitons. Such a Ricci soliton has only one singular point. This soliton also has positive sectional curvature and linear curvature decay. Moreover, it is asymptotically quotient cylindrical (See Definiton \ref{def-2}).

\begin{defi}\label{def-2}
We say that a steady gradient Ricci soliton $(M^n,g,f)$ \textbf{asymptotically (quotient) cylindrical} if for any sequence $\{p_i\}_{i\in \mathbb{N}}$ tending to infinity,
a subsequence of
$(M,R(p_i)g,p_i)$ converges to $(\mathbb{S}^{n-1}/\Gamma\times \mathbb{R},g_{\mathbb{S}^{n-1}/\Gamma}+ds^2,p_\infty)$ in the $C^\infty$ pointed Cheeger--Gromov sense, where $g_{\mathbb{S}^{n-1}/\Gamma}$ is a round metric. 
\end{defi}

In this paper, we will generalize Zhu and the author's work in \cite{DZ5} to steady gradient Ricci solitons on orbifolds that are  asymptotically quotient cylindrical. Actually, we will show that
\begin{theo}\label{main-1}
Suppose $(\mathcal{M},g,f)$  is a complete $\kappa$-noncollapsed steady gradient Ricci soliton on an $n$-dimensional Riemannian orbifold with positive curvature operator and compact singularity. If the scalar curvature has linear decay, then $(\mathcal{M},g)$ is isometric to $M_{Bry}/\Gamma$,\footnote{$M_{Bry}/\Gamma$ means a good orbifold here.} , which is a finite quotient of the $n$-dimensional Bryant soliton.
\end{theo}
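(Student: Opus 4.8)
\emph{Plan of proof.} The plan is to follow the two-step strategy of \cite{DZ5}, paying attention to the orbifold structure at both extremes. By Corollary \ref{cor-flow} the soliton carries an eternal Ricci flow $g(t)=\phi_{-t}^{\ast}g$, and since $M_{\mathrm{sing}}$ is compact there is a compact set $K\supset M_{\mathrm{sing}}$ with $\mathcal{M}\setminus K$ a smooth manifold. Normalizing the steady equation so that $R+|\nabla f|^{2}\equiv 1$, Theorem \ref{theo-R has lower bound} gives $R\ge 0$; combined with the positive curvature operator and the strong maximum principle this yields $R>0$ on the regular locus and $\Delta f=R>0$, so $f$ is proper with no interior maximum, $|\nabla f|\to 1$ at infinity, the level sets $\{f=c\}$ are compact smooth hypersurfaces for $c\gg 1$, and $\mathcal{M}$ has a single end diffeomorphic to $\Sigma\times[0,\infty)$ for some fixed closed manifold $\Sigma$. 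The two steps are: (I) show that $(\mathcal{M},g,f)$ is asymptotically quotient cylindrical in the sense of Definition \ref{def-2}, with cross-section a fixed round $\mathbb{S}^{n-1}/\Gamma$; and (II) symmetrize to identify $(\mathcal{M},g)$ with $M_{Bry}/\Gamma$.

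For step (I) I would follow \cite{DZ5}: given $p_{i}\to\infty$, consider the rescaled flows $(\mathcal{M},R(p_{i})g(R(p_{i})^{-1}t),p_{i})$. By $\kappa$-noncollapsing and the uniform bound on the nonnegative curvature operator, Hamilton's compactness theorem produces a subsequential limit, an ancient $\kappa$-solution with nonnegative curvature operator; since the curvature decays linearly the rescaled distance from $p_{i}$ to $K$ diverges, so the singularities are invisible in the limit and it is a \emph{smooth} ancient solution. The heart of the matter is to show this limit splits off a line: using the steady structure --- at infinity $\nabla f/|\nabla f|$ is almost parallel and $R$ is almost constant on the level sets --- one shows the limit is $N\times\mathbb{R}$ with $N$ a compact shrinking gradient Ricci soliton, and the positivity of the curvature operator of $\mathcal{M}$ forces $N$ to be a round $\mathbb{S}^{n-1}/\Gamma_{i}$ (by Böhm--Wilking, say). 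Finally the level set of $f$ through $p_{i}$ is diffeomorphic to the fixed end cross-section $\Sigma$ and converges to $\mathbb{S}^{n-1}/\Gamma_{i}$, which forces $\Sigma\cong\mathbb{S}^{n-1}/\Gamma_{i}$ for every $i$ and hence that the $\Gamma_{i}$ all agree with a single group $\Gamma$; this is exactly Definition \ref{def-2}.

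For step (II), once the end of $(\mathcal{M},g)$ converges, with a definite rate coming from the linear curvature decay, to $(\mathbb{S}^{n-1}/\Gamma)\times\mathbb{R}$, I would adapt Brendle's symmetrization \cite{Br2} to the quotient setting --- this is the same argument carried out for asymptotically quotient cylindrical steady solitons. On the $\Gamma$-cover of the end the cross-section is a genuine round sphere, so one constructs there the family of approximate rotational Killing fields, solves the associated linearized elliptic/parabolic correction equation (using $R>0$, the soliton identities, and the cylindrical asymptotics) to obtain honest Killing fields, and propagates them inward by unique continuation on the connected real-analytic regular locus. The resulting symmetry exhibits $g$, on the regular part, as a warped product $dr^{2}+\varphi(r)^{2}g_{\mathbb{S}^{n-1}/\Gamma}$; the steady soliton ODE then pins $\varphi$ down as the Bryant profile, $M_{\mathrm{sing}}$ must be the single fixed point of the rotation, and its local model is the corresponding quotient of the Bryant tip. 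Hence $(\mathcal{M},g)$ is the good orbifold $M_{Bry}/\Gamma$.

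The main obstacle I expect is step (I): proving that every blow-down limit splits off a line and has cross-section a \emph{round} space form $\mathbb{S}^{n-1}/\Gamma$ with $\Gamma$ independent of the sequence. The arguments of \cite{DZ5} must be reorganized so that this analysis takes place entirely on the smooth end --- in particular one has to check that in every blow-down the compact singular region recedes to infinity --- and, in step (II), that the symmetry produced near infinity extends across $K$ so that $M_{\mathrm{sing}}$ becomes precisely the fixed-point locus matching $M_{Bry}/\Gamma$.
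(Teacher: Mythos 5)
There is a genuine gap, and it sits exactly where you flag ``the main obstacle.'' The paper does not redo the blow-down analysis or Brendle's symmetrization on the orbifold at all. Its key step is the structure theorem (Theorem \ref{thm-structure of obifold solition}): since the singular set is compact, Lemma \ref{lem-existence of stable point} produces an equilibrium point of $f$, positive Ricci curvature makes it unique (Lemma \ref{lem-uniqueness of stable point}), hence $M_{\rm sing}$ is a single point and $\mathcal{M}$ is a good orbifold, $M=\hat{M}/\Gamma$ with $(\hat{M},\hat{g},\hat{f})$ a complete \emph{smooth} steady gradient Ricci soliton and $\Gamma\subset O(n)$ finite. The hypotheses are then lifted to the cover: $\hat{M}$ is $\kappa$-noncollapsed with positive curvature operator, and the linear curvature decay is transferred by converting $R\le C/r$ into $R\le C/f$ via Lemma \ref{lem-growth of f}, using $\hat{f}=f\circ\pi$, and invoking the Cao--Chen linear growth of $\hat{f}$ (which requires the equilibrium point). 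At that stage Theorem 1.2 of \cite{DZ5} applies verbatim to $\hat{M}$, giving the Bryant soliton and hence $(\mathcal{M},g)\cong M_{Bry}/\Gamma$. No asymptotic-cylinder statement for the orbifold itself is ever needed for this theorem.

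Your plan instead keeps the analysis on the orbifold (or on a cover of the end only), and this is where it fails as written. As the paper points out, the symmetry group of $\mathbb{S}^{n-1}/\Gamma$ is a proper subgroup of $O(n)$, so Brendle's family of approximate rotational Killing fields cannot simply be constructed on, or descended to, the quotient end; and if you build them on the $\Gamma$-cover of the end you are working on an incomplete annular piece rather than a complete soliton, so the global ingredients of \cite{Br2} (maximum principles over the whole soliton, control at the tip) are unavailable. Moreover, propagating the symmetry inward across the compact region containing $M_{\rm sing}$ and concluding that the singular set is a single point sitting at the rotational tip already presupposes what the structure theorem proves --- that the singular set is exactly the unique equilibrium point and that the whole orbifold unwraps to a smooth soliton. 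Your step (I) likewise asserts, rather than proves, the hardest parts of \cite{DZ5} (the line-splitting of blow-down limits, roundness of the cross-section, and independence of $\Gamma$ from the sequence), none of which is needed once one passes to the smooth cover. The missing idea is therefore the good-orbifold reduction; with it the theorem reduces to the known smooth case, and without it your step (II) does not go through as described.
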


We also generalize Brendle's work \cite{Br2} to Ricci solitons on orbifolds.

\begin{theo}\label{main-2}
Suppose $(\mathcal{M},g,f)$  is a complete steady gradient Ricci soliton on an $n$-dimensional Riemannian orbifold with nonnegative sectional curvature and positive Ricci curvature. If $(\mathcal{M},g,f)$ is asymptotically quotient cylindrical, then $(\mathcal{M},g)$ is isometric to $M_{Bry}/\Gamma$, i.e., a finite quotient of the $n$-dimensional Bryant soliton.
\end{theo}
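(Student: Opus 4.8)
The plan is to reduce the statement to Brendle's rigidity theorem for steady solitons on smooth manifolds \cite{Br2} by passing to the orbifold universal cover of $\mathcal M$; the overall scheme parallels the proof of Theorem \ref{main-1}, except that the asymptotic geometry is now assumed rather than derived. Two points need genuine work: showing that $\mathcal M$ is a \emph{good} orbifold --- in fact a global quotient $\mathbb R^n/\Gamma$ --- so that its orbifold universal cover is a smooth manifold, and showing that the asymptotically quotient cylindrical structure on $\mathcal M$ lifts to an honest asymptotically cylindrical structure upstairs.

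For the first point I would argue as follows. The steady identity $R+|\nabla f|^2\equiv c$, together with Theorem \ref{theo-R has lower bound} (which gives $R\ge 0$) and Definition \ref{def-2}, forces $R\to 0$ at infinity, so after normalizing $c=1$ we have $|\nabla f|\to 1$ there. Since $\lambda=0$, in every chart the soliton equation reads $\mathrm{Hess}\,\hat f_\alpha=\widehat{\mathrm{Ric}}_{\hat g_\alpha}$, so the hypothesis $\mathrm{Ric}>0$ makes $f$ strictly convex; combined with $|\nabla f|\to 1$ this makes $f$ a proper convex exhaustion with a single critical point $p_0$, its minimum. Flowing outward from $p_0$ along $\nabla f$ --- using completeness of the flow, Theorem \ref{theo-automorphism} --- identifies $\mathcal M\setminus\{p_0\}$ with $\Sigma\times(0,\infty)$ for a level orbifold $\Sigma$, which by the $C^\infty$ Cheeger--Gromov convergence in Definition \ref{def-2} is orbifold-diffeomorphic to $\mathbb S^{n-1}/\Gamma$ for some finite $\Gamma\subset O(n)$. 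Coning off $\Sigma$ at $p_0$ then exhibits $\mathcal M$ as orbifold-diffeomorphic to $\mathbb R^n/\Gamma$; in particular $\mathcal M$ is good.

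Granting this, let $\hat{\mathcal M}$ be the orbifold universal cover. By the above it is diffeomorphic to $\mathbb R^n$, hence a smooth manifold, with deck group $\Gamma$ and $\mathcal M=\hat{\mathcal M}/\Gamma$; the pulled-back data $(\hat{\mathcal M},\hat g,\hat f)$ is a complete steady gradient Ricci soliton on a manifold with positive Ricci curvature, nonnegative sectional curvature, and an end diffeomorphic to $\mathbb S^{n-1}\times(0,\infty)$. For any $p_i\to\infty$ in $\hat{\mathcal M}$ the covering map is a local isometry near $p_i$, so $(\hat{\mathcal M},R(p_i)\hat g,p_i)$ has the same local geometry as the rescalings downstairs, which by hypothesis converge to the round cylinder $(\mathbb S^{n-1}/\Gamma)\times\mathbb R$; since the injectivity radius of $R(p_i)\hat g$ at $p_i$ is bounded below by (essentially) that of the round $\mathbb S^{n-1}$ rather than of its $\Gamma$-quotient, passing to universal covers of the Cheeger--Gromov limits yields $(\hat{\mathcal M},R(p_i)\hat g,p_i)\to(\mathbb S^{n-1}\times\mathbb R,\,g_{\mathbb S^{n-1}}+ds^2,\,p_\infty)$ in the $C^\infty$ sense. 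Thus $\hat{\mathcal M}$ is asymptotically cylindrical in the sense of \cite{Br2}, so $(\hat{\mathcal M},\hat g)\cong M_{Bry}$ by Brendle's theorem, and since the deck group embeds into $\mathrm{Isom}(M_{Bry})=O(n)$ as a finite subgroup, $(\mathcal M,g)=(\hat{\mathcal M},\hat g)/\Gamma$ is isometric to $M_{Bry}/\Gamma$.

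The hard step is the first one --- proving that $\mathcal M$ is genuinely $\mathbb R^n/\Gamma$, not merely locally modelled on such quotients --- and this is exactly where $\mathrm{Ric}>0$ enters essentially, through the strict convexity of $f$ and the resulting control on the global topology. Secondary care is needed to match the Cheeger--Gromov limits across the covering $\hat{\mathcal M}\to\mathcal M$ (keeping injectivity radii controlled so that the limit upstairs is the universal cover of the limit downstairs), and one should also confirm that the curvature hypotheses of Theorem \ref{main-2} are precisely those under which \cite{Br2} applies. An alternative that avoids the cover entirely is to run Brendle's construction directly on $\mathcal M$: the rotation fields of the asymptotic quotient cylinder are approximately Killing on the end with quantitative decay, and one extends them to honest Killing fields on all of $\mathcal M$ by solving the associated linear drift-elliptic equation --- the existence, uniqueness, and unique-continuation inputs all remain valid in the orbifold category, since the drift Laplacian is genuinely elliptic in every orbifold chart, so the singular set, compact or not, is harmless.
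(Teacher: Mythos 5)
Your overall strategy (pass to a smooth finite cover and quote Brendle) is the paper's strategy, and your ``goodness'' step is essentially a reconstruction of Theorem \ref{thm-structure of obifold solition} (which you could simply invoke, after supplying the equilibrium point via Lemma \ref{lem-existence of stable point} and $M_{\mbox{sing}}=\{p_0\}$ via Lemma \ref{lem-uniqueness of stable point}; your chain ``$R\to0$ at infinity $\Rightarrow$ proper convex exhaustion with one critical point'' is itself only asserted, since $R\to 0$ does not follow immediately from Definition \ref{def-2}). The genuine gap, however, is at the end: what Definition \ref{def-2} provides is only a \emph{static} blow-down limit $(M,R(p_i)g,p_i)\to (\mathbb S^{n-1}/\Gamma)\times\mathbb R$, whereas Brendle's theorem in \cite{Br2} requires the full asymptotically cylindrical package: two-sided linear scalar curvature decay $c_1/r\le R\le c_2/r$ and convergence of the \emph{parabolically} rescaled flows to the family of shrinking round cylinders with exact $\mathbb S^{n-1}$ cross-section. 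You never establish linear curvature decay, linear growth of $f$, $\kappa$-noncollapsing, or the time-dependent convergence upstairs; this is precisely where the bulk of the paper's proof lives: the lower bound $(\Delta R+2|\Ric|^2)/R^2\ge\epsilon$ outside a compact set (obtained from the cylindrical limit), Lemma \ref{lem-decay} and Lemma \ref{lem-growth of f} (via \cite{CDM22}), the noncollapsing argument, Theorem 5.4 of \cite{DZ5} to dimension-reduce the cover, positivity of the curvature operator at infinity as in \cite{BCDMZ}, and Ni's Type-I theorem \cite{Ni} to show the cross-sectional ancient solution is round. Your closing remark that ``one should confirm the hypotheses of \cite{Br2}'' is exactly the unfinished core of the argument, not a routine check.

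The lifting step is also not correct as justified, and the paper deliberately avoids it. For a sequence $\hat p_i\to\infty$ in the cover, the Cheeger--Gromov limit of $(\hat M,R(p_i)\hat g,\hat p_i)$ is not automatically the universal cover of the limit downstairs: one must rule out that it is an intermediate quotient $(\mathbb S^{n-1}/H)\times\mathbb R$, which requires controlling the deck action along the sequence (rescaled displacements bounded away from $0$ and from $\infty$, equivalently bounded rescaled diameter of the lifted cross-sections at the curvature scale) together with simple connectivity of $\hat M$; your injectivity-radius remark does not do this --- the injectivity radius upstairs is a priori only bounded below by the one downstairs, not by that of the round sphere, and ``passing to universal covers of the limits'' is precisely the claim that needs proof. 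The diameter control you would need at this point is essentially the dimension-reduction structure you skipped, which is why the paper re-derives the asymptotics upstairs via \cite{DZ5} and \cite{Ni} instead of lifting limits across the cover. The alternative you sketch (extending approximate rotation fields to Killing fields directly on the orbifold) is a separate project, not a proof. As written, the proposal reduces the theorem to \cite{Br2} without verifying its hypotheses, so it is incomplete at the decisive step.
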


When $n=4$, the assumption that $(\mathcal{M},g,f)$ is asymptotically quotient cylindrical can be replaced by assuming the tangent flow is $\mathbb{R}\times\mathbb{S}^3/\Gamma$(see \cite{BCDMZ}). Note that the symmetry group of $\mathbb{S}^{n-1}/\Gamma$ is a proper subgroup of $O(n)$ when $\Gamma$ is non-trivial. So, it is difficult to modify Brendle's work directly. A key observation is that the orbifolds in Theorem \ref{main-1} and Theorem \ref{main-2} must be good orbifolds by Theorem \ref{thm-structure of obifold solition}.

The paper is organized as follows. In Section \ref{section-basics}, we collect some basic properties of orbifolds and fix the notions. In Section \ref{section-existence of Ricci flow}, we define the map $\phi_t$ and show it is an automorphism of $\mathcal{M}$. Theorem \ref{theo-R has lower bound} and Corollary \ref{cor-flow} are also proved in Section \ref{section-existence of Ricci flow}. In Section \ref{section-rigidity of soliton}, we prove Theorem \ref{main-1} and Theorem \ref{main-2}.



\section{Basics of orbifolds}\label{section-basics}

We first review the definition of orbifolds.  Orbifolds are built on local models.
\begin{defi}\label{def-local model}
A \textbf{local model} is a pair $(\hat{U},\Gamma)$ where $\hat{U}$ a connected open subset of a Euclidean space
and $\Gamma$ is a finite group that acts smoothly and effectively on $\hat{U}$. A \textbf{soomth map} between local models $(\hat{U}_1,\Gamma_1)$ and $(\hat{U}_2,\Gamma_2)$ is given by $(\hat{f},\rho)$ so that $\hat{f}$ is $\rho$-equivalent, i.e., $\hat{f}(xg_1)=\hat{f}(x)\rho(g_1)$, where $\hat{f}:\hat{U}_{1}\to\hat{U}_2$ is a smooth map and $\rho:\Gamma_1\to \Gamma_{2}$ is a homomorphism. The smooth map is an \textbf{embedding} if $\hat{f}$ is an embedding \footnote{  When the smooth map given by $(\hat{f},\rho)$ is an embedding, $\rho$ is an injective homomorphism. See the paragraph before Definition 2.1 in \cite{KL}}.
\end{defi}

Then, orbifolds are defined as follows.
\begin{defi}\label{orbifold}
    An orbifold $\mathcal{O}$ is an maximal equivalence  class of orbifold atlases, where two atlases are equivalent if
they are both included in a third atlas. An $n$-dimensional orbifold atlas $\{(U_{\alpha},\hat{U}_{\alpha},\pi_{\alpha},\Gamma_{\alpha})\}_{\alpha\in I}$ consists of the following ,
\begin{enumerate}
    \item [(1)] A Hausdorff paracompact topological space $|\mathcal{O}|$;
     \item [(2)] An open covering $\{U_{\alpha}\}$ of $|\mathcal{O}|$;
     \item [(3)] Local models $\{(\hat{U}_{\alpha},\Gamma_{\alpha })\}$ of $|\mathcal{O}|$ with each $\hat{U}_{\alpha}$ an open subset of $\mathbb{R}^n$;
     \item [(4)] Let $\hat{\pi}:\hat{U}_{\alpha}\to \hat{U}_{\alpha}/\Gamma_{\alpha}$ be the quotient map. There exists a homomorphism $\phi_{\alpha}:\hat{U}_{\alpha}/\Gamma_{\alpha}\to U_{\alpha}$  such that $\pi_{\alpha}=\phi_{\alpha}\circ\hat{\pi}_{\alpha}$,i.e.,the following diagram commutes:
\begin{displaymath}
  \xymatrix{
  \hat{U}_{\alpha} \ar[dr]^{\pi_{\alpha}}\ar[d]_{\hat{\pi}_{\alpha}}\\
 \hat{U}_{\alpha}/\Gamma_{\alpha}\ar[r]^{\phi_{\alpha}}&U_{\alpha}
  }
  \end{displaymath}  
     \item [(5)] If $p\in  U_{\alpha}\cap U_{\beta}$, then there is a local chart $(U_{\gamma},\hat{U}_{\gamma},\pi_{\gamma},\Gamma_{\gamma})$ with $p\in U_{\gamma}\subset U_{\alpha}\cap U_{\beta}$ and embeddings \footnote{See Definition \ref{def-local model} for the definition of an embedding between local models.}$\hat{\phi}_{\gamma\alpha}:(\hat{U}_{\gamma},\Gamma_{\gamma})\to(\hat{U}_{\alpha},\Gamma_{\alpha})$ and $\hat{\phi}_{\gamma\beta}:(\hat{U}_{\gamma},\Gamma_{\gamma})\to(\hat{U}_{\beta},\Gamma_{\beta})$ such that the following diagram commutes:

     $$\begin{CD}
          \hat{U}_{\alpha}@<\hat{\phi}_{\gamma\alpha}<< \hat{U}_{\gamma}@>\hat{\phi}_{\gamma\beta}>>\hat{U}_{\beta}\\
         @VV\pi_{\alpha} V@VV\pi_{\gamma}V@VV\pi_{\beta}V\\
         U_{\alpha}@<\supset<< U_{\gamma}@>\subset>>U_{\beta}
     \end{CD}$$

     \end{enumerate}
\end{defi}

We can also define smooth functions on orbifolds.
\begin{defi}\label{smooth functions on orbifolds}
   If $\mathcal{O}^n$ is an orbifold, then a function $f:\mathcal{O}\to\RR$ is called \textbf{smooth} if for every $x \in \mathcal{O}^n$ there exists an orbifold chart $(U_{\alpha}, \hat{U}_{\alpha}, \pi_{\alpha}, \Gamma_{\alpha})$ for $\mathcal{O}^n$ with $x\in U_{\alpha}$ and there exists a smooth function $\hat{f}: \widetilde{U}_{\alpha} \to \RR$ such that $\hat{f} = f \circ \pi_{\alpha}$, i.e.the following diagram commutes:
\begin{displaymath}
  \xymatrix{
 \hat{U}_{\alpha} \ar[dr]^{\hat{f}}\ar[d]_{\pi_{\alpha}}\\
 U_{\alpha}\ar[r]^{f}&\mathbb{R}
  }
  \end{displaymath}
\end{defi}

Let $\mathcal{O}$ be an orbifold and $O$ be its topological space. Given a point $p\in O$ and a local model $(\hat{U},\Gamma)$ around $p$. Suppose $\hat{p}\in \hat{U}$ project to $p$. We can define a group $\Gamma_{p}=\{g\in \Gamma| \hat{p}g=\hat{p}\}$. The isomorphism class of $\Gamma_p$ is independent of the choice of the local model $(\hat{U},\Gamma_p)$ (See Proposition 23 in \cite{Borz}). One can also find a neighbourhood $U_p\subset O$ of $p$ and  a local model $(\hat{U}_p,\Gamma_p)$ such that $U_p=\hat{U}_p/\Gamma_p$. The regular part $O_{\mbox{reg} }$ of $O$ consists of all the points with $\Gamma_p=\{e\}$. Obviously, $O_{\mbox{reg} }$  is  a smooth manifold and is an open subset of $O$. 

Now, we  consider the singular part of $ O$. Given a local model $(\hat{U},\Gamma)$ around $p$ and assume $\hat{p}\in \hat{U}$ project to $p$. Let $H\subset \Gamma$ be a subgroup of $\Gamma$. Let $U_{H}=\{x\in \hat{U}| \Gamma_x=H\}$ and $U_{H}^{\prime}=\{x\in \hat{U}| H\subset \Gamma_x\}$. $U_{H}$ is called the \textbf{stratum} of $\hat{U}$ associated with $H$. By Proposition 32 and Remark 33 of \cite{Borz}, $U_{H}$ is a totally geodesic submanifold of $ \hat{U}$ and  $U_{H}^{\prime}$ is a closed totally geodesic submanifold of $ \hat{U}$. Hence, we can see that the singular set $O_{\mbox{sing} }$ of $O$ is locally the image of the union of a finite number of closed submanifolds of $ \hat{U}$. Therefore, $O_{\mbox{reg} }$ is an open and dense submanifold of $O$.

 Riemannian orbifolds are defined as follows.
\begin{defi}\label{def-Riem orbifold}
    A Riemannian orbifold $\mathcal{O}$ is an orbifold euqiped with a Riemannian metric $g$. $g$ is given by an atlas for $\mathcal{O}$ along with a collection of Riemannian metrics $g_{\alpha}$ on $\hat{U}_{\alpha}$ such that
    \begin{enumerate}
    \item [(1)] $\Gamma_{\alpha}$ acts isometrically on $\hat{U}_{\alpha}$;
     \item [(2)] Each $\phi_{\alpha}$ from (4) of Definition \ref{orbifold} is an isometry. 
     \item [(3)] The embeddings $\hat{\phi}_{\gamma\alpha}:(\hat{U}_{\gamma},\Gamma_{\gamma})\to(\hat{U}_{\alpha},\Gamma_{\alpha})$ and $\hat{\phi}_{\gamma\beta}:(\hat{U}_{\gamma},\Gamma_{\gamma})\to(\hat{U}_{\beta},\Gamma_{\beta})$ from (5) of Definition \ref{orbifold} are isometric.
     \end{enumerate}
\end{defi}

Now, let $\mathcal{O}$ be a Riemannian orbifold with metric $g$ and $O$ be its topological space.  To define the completeness of the Riemannian orbifold, we need to introduce the admissible curve.

\begin{defi}
Let $\mathcal{O}$ be an orbifold. For any $p\in O$ with a chart $(U_{p},\Gamma_{p})$ aroud $p$.  A curve $\gamma:[0,1]\to U_{p}$ in the chart is \textbf{admissible} if $[0,1]$ can be decomposed into countable number of subintervals $[t_i,t_{i+1}]$ such that each $\gamma|_{(t_i,t_{i+1})}$ is contained in a single  stratum $H\subset \Gamma_{p}$. A curve $\gamma:[0,1]\to O$ is \textbf{admissible} if it is admissible in every chart $U_p$ such that $\gamma\cap U_{p}\neq \emptyset$.
\end{defi}

For a admissible curve $\gamma$ on $O$, we can give a well-defined length of $\gamma$. We first decompose the curve into many small pieces such that each piece lies in a single stratum and they can be lifted in each chart. Then, we can compute the length of each piece in each chart. By adding the length of each piece, we may define the length of a adimssible curve. See Proposition 36, Proposition 37 and Theorem 38 for the details of the definition of the length in \cite{Borz}. For any $p,q\in O$, we define the distance $d(p,q)$ between $p$ and $q$ by 
\begin{align}
d(x,y)=\inf\{L(\gamma)|\gamma~\mbox{is a admissible curve from}~p~\mbox{to}~q\}.
\end{align}
Then, $(O,d)$ is a length space.
\begin{defi}
    $(\mathcal{O},g)$ is a \textbf{complete} Riemannian orbifold if $(O,d)$ is a complete length space.
\end{defi}

When $(\mathcal{O},g)$ is complete, any two points can be joined by a minimal geodesic i.e. a segment(see Theorem 9 in \cite{Borz}). The following theorem due to \cite{Borz} describes the segments on a Riemannian orbifold.

\begin{theo}[Theorem 3 and Remark 4 in \cite{Borz}]\label{theo-segment}
Let $\mathcal{O}$ be an orbifold and $O$ be its topological space. Suppose $\gamma:[0,1]\to O$ is a segment with $\gamma(0)=p$ and $\gamma(1)=q$. Then, either $\gamma\subset O_{sing}$ or $\gamma\cap O_{sing}\subset \{p\}\cup\{q\}$. 
\end{theo}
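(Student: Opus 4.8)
The plan is to deduce the dichotomy from a connectedness argument applied to the ``singular time set'' $S:=\gamma^{-1}(O_{sing})\subset[0,1]$. Since $O_{reg}$ is open, $O_{sing}$ is closed, hence $S$ is closed in $[0,1]$. I would first observe that the assertion is equivalent to: $S\cap(0,1)$ is either empty or all of $(0,1)$. Indeed, if $S\cap(0,1)=\emptyset$ then $\gamma\cap O_{sing}\subset\{p\}\cup\{q\}$, while if $S\cap(0,1)=(0,1)$, closedness of $S$ forces $S=[0,1]$, i.e.\ $\gamma\subset O_{sing}$. Since $(0,1)$ is connected, it therefore suffices to prove that $S\cap(0,1)$ is open in $(0,1)$: whenever $t_0\in(0,1)$ and $x_0:=\gamma(t_0)\in O_{sing}$, I must find $\delta>0$ with $\gamma((t_0-\delta,t_0+\delta))\subset O_{sing}$.

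To establish this local statement I would pass to a local model. Fix a chart $U=\hat U/\Gamma$ about $x_0$ with $\Gamma=\Gamma_{x_0}\ne\{e\}$, projection $\hat\pi:\hat U\to U$, and a lift $\hat x_0\in\hat\pi^{-1}(x_0)$; by the choice of this chart $\hat x_0$ is fixed by all of $\Gamma$, so $\hat\pi^{-1}(x_0)=\{\hat x_0\}$. Choose $\delta>0$ small enough that $\gamma([t_0-\delta,t_0+\delta])\subset U$, that $\hat U$ is geodesically convex, and that minimising geodesics in $U$ between points of $\gamma([t_0-\delta,t_0+\delta])$ are unique. Using that $\hat\pi$ is a submetry, so $d_U(\hat\pi\hat a,\hat\pi\hat b)=\min_{g\in\Gamma}d_{\hat U}(\hat a,g\hat b)$, I would lift each of the segments $\gamma|_{[t_0-\delta,t_0]}$ and $\gamma|_{[t_0,t_0+\delta]}$ to a minimising geodesic of $\hat U$ (lift an endpoint, take the corresponding minimising geodesic of $\hat U$, and use uniqueness of segments in $U$ to see its projection is the given segment). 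Since $\hat x_0$ is the unique lift of $x_0$, these two geodesics glue, at time $t_0$, to a length-preserving lift $\hat\gamma:[t_0-\delta,t_0+\delta]\to\hat U$ of $\gamma|_{[t_0-\delta,t_0+\delta]}$ with $\hat\gamma(t_0)=\hat x_0$. Normalising $\gamma$ to unit speed, $L(\hat\gamma)=2\delta$, whereas $d_{\hat U}(\hat\gamma(t_0-\delta),\hat\gamma(t_0+\delta))\ge d_U(\gamma(t_0-\delta),\gamma(t_0+\delta))=2\delta$; hence $\hat\gamma$ is itself minimising, so a smooth geodesic of $\hat U$.

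Next I would run a ``no corner'' argument to pin down the velocity $v:=\hat\gamma'(t_0)$. For any $g\in\Gamma$, the curve $g\cdot\hat\gamma$ is again a unit-speed lift of $\gamma$ with $(g\hat\gamma)(t_0)=\hat x_0$, so the concatenation of $\hat\gamma|_{[t_0-\delta,t_0]}$ with $(g\hat\gamma)|_{[t_0,t_0+\delta]}$ is a curve of length $2\delta$ from $\hat\gamma(t_0-\delta)$ to $g\hat\gamma(t_0+\delta)$; exactly as above it is length-minimising, hence a smooth geodesic, hence without corner at $\hat x_0$, which forces $g\cdot v=v$. Letting $g$ range over $\Gamma$ gives $v\in T_{\hat x_0}\mathrm{Fix}(\Gamma)$. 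Since $\mathrm{Fix}(\Gamma)=\{\hat y\in\hat U:\hat y g=\hat y\ \forall\,g\in\Gamma\}$ is a totally geodesic submanifold of $\hat U$ (the set denoted $U'_\Gamma$ in the discussion above) and $\hat\gamma$ is a geodesic with $\hat\gamma(t_0)\in\mathrm{Fix}(\Gamma)$ and $\hat\gamma'(t_0)\in T\mathrm{Fix}(\Gamma)$, I conclude $\hat\gamma([t_0-\delta,t_0+\delta])\subset\mathrm{Fix}(\Gamma)$. Every point of $\mathrm{Fix}(\Gamma)$ has local isotropy containing $\Gamma\ne\{e\}$, so its $\hat\pi$-image lies in $O_{sing}$; therefore $\gamma((t_0-\delta,t_0+\delta))\subset O_{sing}$, establishing openness of $S\cap(0,1)$ and completing the proof.

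I expect the one genuinely delicate point to be the lifting step in the second paragraph. A priori $\gamma$ may meet $O_{sing}$ at interior times, where ordinary covering-space lifting fails, so one must exploit the metric structure --- the submetry property of $\hat\pi$ together with local uniqueness of short segments --- to produce a lift through $\hat x_0$ that is a genuine smooth geodesic of the Euclidean model $\hat U$. This is precisely the content supplied by \cite{Borz}; granting it, the connectedness reduction and the ``no corner'' argument are routine.
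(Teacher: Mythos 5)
First, note that the paper does not prove this statement at all: it is quoted verbatim from Borzellino's thesis (Theorem 3 and Remark 4 in \cite{Borz}), so there is no in-paper proof to compare with. Judged on its own, your architecture is the right one and is essentially the standard argument: reduce by a clopen/connectedness argument to the local claim at an interior singular time $t_0$, pass to the fundamental chart where $\hat\pi^{-1}(x_0)=\{\hat x_0\}$, lift the segment to a minimizing geodesic of $\hat U$ through $\hat x_0$, run the ``no corner'' comparison with $g\cdot\hat\gamma$ to get $dg_{\hat x_0}(v)=v$ for all $g\in\Gamma$, and use that $\mathrm{Fix}(\Gamma)$ is closed and totally geodesic (the set $U'_{\Gamma}$, cf.\ the paper's quotation of Prop.~32/Remark~33 of \cite{Borz}) to conclude the lift, hence $\gamma$ near $t_0$, lies in the singular locus.

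The genuine gap is the identification step in your second paragraph. You produce an upstairs minimizing geodesic from a lift of $\gamma(t_0-\delta)$ to $\hat x_0$ and then assert its projection equals $\gamma|_{[t_0-\delta,t_0]}$ by ``uniqueness of segments in $U$,'' claimed to be arrangeable by shrinking $\delta$. That uniqueness is false in general: in $\mathbb{R}^2/\mathbb{Z}_2$ (cone angle $\pi$), the points $\pi(r,0)$ and $\pi(0,r)$ are joined by two distinct segments for every $r>0$, so no smallness of $\delta$ gives uniqueness between arbitrary nearby points. What you actually need is the special case that every segment with endpoint $x_0$ is the projection of a radial geodesic ending at $\hat x_0$ (equivalently, segments to $x_0$ are unique); this is true, but proving it requires continuous path lifting for the finite isometric $\Gamma$-action together with length preservation of lifts --- i.e.\ exactly the nontrivial content you defer to \cite{Borz}, which is the very source the theorem is being quoted from. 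Without that lemma, your no-corner argument only shows that \emph{some} segment between $\gamma(t_0\pm\delta)$ through $x_0$ lies in $O_{sing}$, not that $\gamma$ itself does. So: correct strategy, correct auxiliary steps, but the crux (local lifting of the given segment through the singular point) is not supplied, and the substitute justification offered for it would fail as stated.
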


As a corrollary of the theorem, we conclude that $O_{reg}$ is convex and therefore is connected.

In this paper, we will deal with positively curved Ricci solitons. Orbifolds with positive curvature are defined as following.

\begin{defi}
    A Riemannian orbifold $\mathcal{O}$ equipped with metric $g$ has positive (or nonnegative) sectional curvature if for any $p\in O$, there is a chart $(U_{\alpha},\hat{U}_{\alpha},\pi_{\alpha},\Gamma_{\alpha},g_{\alpha})$ with $p\in U_{\alpha}$ such that $(\hat{U}_{\alpha},g_{\alpha})$ has positive ( or nonnegative) sectional curvature.
\end{defi}
    The positivity of Ricci curvature and scalar curvature can be  defined similarly.

\section{The existence of corresponding Ricci flow}\label{section-existence of Ricci flow}

We have given the definition of gradient Ricci Solitons on a Riemannian orbifold in Definition \ref{def-solitons on orbifolds}. Now, let's fix some notations that will be used later on. Let $(\mathcal{M},g,f)$ be a gradient Ricci soliton on a Riemannian orbifold. Let $M=|\mathcal{M}|$ be the topological space of $\mathcal{M}$. We denote the singular locus of $\mathcal{M}$ by $M_{sing}$ and the regular locus by $M_{reg}$. Note that $M_{reg}$ is a smooth manifold. By abuse of notation, we denote the induced metric on $M_{reg}$ by $g$ and the induced potential function on $M_{reg}$ by $f$. Then, $(M_{reg},g,f)$ is a incomplete gradient Ricci soliton on a smooth manifold.

\begin{lem}\label{lem-regualar point stay regular}
Suppose $(\mathcal{M},g,f)$  is a gradient Ricci soliton on an $n$-dimens-ional Riemannian orbifold. Suppose $(U,\hat{U},\pi,\Gamma)$ is   a chart  around $p\in U$ along with $\hat{p}\in \hat{U}$ project to $p$. Let $\hat{f}$ be the smooth function such that $\hat{f}=f\circ \pi$ on $\hat{U}$. Let $\hat{\phi}_t$ be generated by $\hat{\nabla} \hat{f}$ and $\phi_t$ be generated by $\nabla f$\footnote{We regard $\nabla f$ as the vector field on $M_{reg}$.}. If $|\nabla f|(p)\neq0$ and $p\in M_{\mbox{reg}}$, then there is a constant $\varepsilon>0$ so that 
\begin{align}
\pi(\hat{\phi}_t(\hat{p}))=\phi_t(p)\in M_{\mbox{reg}},~\forall~t\in(-\varepsilon,\varepsilon).\notag
\end{align}
\end{lem}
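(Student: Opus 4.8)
The plan is to argue that being regular is an open condition along the integral curve of $\nabla f$, so that starting at a regular point forces the curve to stay regular for a short time. First I would observe that the statement is really local near $p$: once $p\in M_{\mathrm{reg}}$ and $|\nabla f|(p)\neq 0$, pick the chart $(U,\hat U,\pi,\Gamma)$ and the lift $\hat p$. Since $p$ is regular, the isotropy group $\Gamma_{\hat p}$ is trivial, and because $\Gamma$ acts effectively the set of points of $\hat U$ with trivial isotropy is open; equivalently, the preimage $\pi^{-1}(M_{\mathrm{sing}}\cap U)$ is a closed subset of $\hat U$ (it is locally a finite union of the closed strata $U_H'$ for nontrivial $H\subset\Gamma$, by the discussion following Definition \ref{def-Riem orbifold} and Proposition 32 of \cite{Borz}). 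So $\hat p$ has a neighbourhood $\hat V\subset\hat U$ consisting entirely of regular points.

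The next step is to run the flow $\hat\phi_t$ of $\hat\nabla\hat f$ on $\hat U$. Since $\hat f$ is smooth on the open set $\hat U\subset\mathbb R^n$, standard ODE theory gives an $\varepsilon>0$ and a solution $\hat\phi_t(\hat p)$ defined for $t\in(-\varepsilon,\varepsilon)$ with $\hat\phi_0(\hat p)=\hat p$; shrinking $\varepsilon$ if necessary we may assume $\hat\phi_t(\hat p)\in\hat V$ for all $t\in(-\varepsilon,\varepsilon)$, hence $\hat\phi_t(\hat p)$ is a regular point of $\hat U$ and $\pi(\hat\phi_t(\hat p))\in M_{\mathrm{reg}}$. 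The only remaining point is the identification $\pi(\hat\phi_t(\hat p))=\phi_t(p)$, where $\phi_t$ is the flow of $\nabla f$ regarded as a vector field on the manifold $M_{\mathrm{reg}}$. This follows because $\pi:\hat V\to\pi(\hat V)\subset M_{\mathrm{reg}}$ is a diffeomorphism (both $\Gamma$ acts trivially on $\hat V$, up to shrinking, and $\pi$ is the orbifold chart map), and $\pi$ is an isometry by Definition \ref{def-Riem orbifold}(2), so it intertwines $\hat\nabla\hat f$ with $\nabla f$ and therefore carries the integral curve of $\hat\nabla\hat f$ through $\hat p$ to the integral curve of $\nabla f$ through $p$. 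By uniqueness of integral curves these coincide on $(-\varepsilon,\varepsilon)$.

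I expect the only subtlety — and the main thing to be careful about — is the claim that $\pi$ restricts to a diffeomorphism onto an open subset of $M_{\mathrm{reg}}$ near a regular point: one must use that $\Gamma_{\hat p}$ is trivial and that nearby points also have trivial isotropy to ensure $\pi$ is injective (not merely a local diffeomorphism) on a small enough $\hat V$, possibly after replacing $\hat V$ by $\bigcap_{g\in\Gamma\setminus\Gamma_{\hat p}}(\hat V\setminus \hat V g)$ or by a $\Gamma$-small neighbourhood in the sense of the local structure $U_p=\hat U_p/\Gamma_p$ recalled in Section \ref{section-basics}. Everything else is either the standard existence/uniqueness theorem for ODEs or a direct consequence of the fact that the chart maps are isometries, so $\nabla f$ pulls back to $\hat\nabla\hat f$. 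The hypothesis $|\nabla f|(p)\neq 0$ is not actually needed for this openness argument, but keeping it costs nothing and matches the companion Lemma \ref{lem-singular point stay in singular set}.
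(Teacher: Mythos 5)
Your proposal is correct and follows essentially the same route as the paper: both arguments choose a neighbourhood of $\hat p$ on which $\pi$ restricts to an isometry onto an open subset of $M_{\mathrm{reg}}$ (the paper simply asserts the existence of such a $U'\subset\hat U$, while you justify it via triviality of the isotropy and openness of the regular set), deduce $\pi_{\ast}(\hat\nabla\hat f)=\nabla f$ there, and conclude by matching the integral curves for small $|t|$. Your extra observations (uniqueness of integral curves, and that $|\nabla f|(p)\neq 0$ is not actually used) are consistent with the paper's argument.
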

\begin{proof}
Since $p\in M_{\mbox{reg}}$, we can find  $U^{\prime}\subset \hat{U}$ such that $\pi|_{U^{\prime}}:U^{\prime}\to \pi(U^{\prime})$ is an isometry. Then, $\pi(U^{\prime})\subseteq M_{\mbox{reg}}$. Since $\hat{f}=f\circ\pi$ and $\pi|_{U^{\prime}}$ is an isometry, we get $\pi_{\ast}(\hat{\nabla} \hat{f})=\nabla f$. Therefore, there exists a constant $\varepsilon>0$ so that 
\begin{align}
\pi(\hat{\phi}_t(\hat{p}))=\phi_t(p)\in \pi(U^{\prime}),~\forall~t\in(-\varepsilon,\varepsilon).\notag
\end{align}
We complete the proof.
\end{proof}

\begin{lem}\label{lem-commutative lemma}
Suppose $(\mathcal{M},g,f)$  is a gradient Ricci soliton on an $n$-dimens-ional Riemannian orbifold. Suppose $(U,\hat{U},\pi,\Gamma)$ is   a chart  around $p\in U$ along with $\hat{p}\in \hat{U}$ project to $p$. Let $\hat{f}$ be the smooth function such that $\hat{f}=f\circ \pi$ on $\hat{U}$. Let $\hat{\phi}_t$ be generated by $\hat{\nabla} \hat{f}$. If  $\pi(\hat{\phi}_t(\hat{p}))\in M_{reg}$ for all $t\in (a,b)$, then, for any $h\in \Gamma$, we have  
\begin{align}
\hat{\phi}_{t}(\hat{p})\cdot h=\hat{\phi}_{t}(\hat{p}\cdot h),~\forall~t\in (a,b).
\end{align}
\end{lem}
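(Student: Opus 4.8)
The plan is to exploit the fact that $\hat{f} = f \circ \pi$ is $\Gamma$-invariant on $\hat{U}$, so its gradient vector field $\hat\nabla\hat f$ is equivariant under the isometric $\Gamma$-action, and equivariant vector fields have equivariant flows. Concretely, for each fixed $h\in\Gamma$, write $R_h\colon\hat U\to\hat U$ for the isometry $x\mapsto x\cdot h$. First I would verify $R_h^{*}\hat f = \hat f$: indeed $\hat f(x\cdot h) = f(\pi(x\cdot h)) = f(\pi(x)) = \hat f(x)$ since $\pi$ factors through $\hat U/\Gamma$. Because $R_h$ is an isometry of $(\hat U,\hat g_\alpha)$ and $\hat f$ is $R_h$-invariant, the gradient is natural under $R_h$: $(R_h)_{*}(\hat\nabla\hat f) = \hat\nabla\hat f$, i.e. $\hat\nabla\hat f$ is $R_h$-related to itself.

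Next I would invoke the standard ODE uniqueness principle for flows of related vector fields: if a diffeomorphism $\Phi$ carries a vector field $X$ to itself ($\Phi_{*}X = X$), then $\Phi$ intertwines the flow, $\Phi\circ\hat\phi_t = \hat\phi_t\circ\Phi$, on the common domain of definition. Applying this with $\Phi = R_h$ and $X = \hat\nabla\hat f$ gives $R_h(\hat\phi_t(\hat p)) = \hat\phi_t(R_h(\hat p))$, which is exactly the claimed identity $\hat\phi_t(\hat p)\cdot h = \hat\phi_t(\hat p\cdot h)$. This argument is purely local on $\hat U$ and does not even really need the hypothesis that $\pi(\hat\phi_t(\hat p))\in M_{\mathrm{reg}}$; that hypothesis is presumably there only to guarantee the trajectory $\hat\phi_t(\hat p)$ stays inside a single chart $\hat U$ for $t\in(a,b)$ (so that the flow is well-defined on all of $(a,b)$ without leaving the domain), which is what one needs to apply the uniqueness statement on that whole time interval. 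I would phrase the proof so that the regularity hypothesis is used exactly to ensure $\hat\phi_t(\hat p)$ remains in the domain where $\hat\nabla\hat f$ is defined and the two curves $t\mapsto R_h(\hat\phi_t(\hat p))$ and $t\mapsto\hat\phi_t(R_h(\hat p))$ are both defined on $(a,b)$.

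To make the uniqueness step self-contained, I would set $\alpha(t) = R_h(\hat\phi_t(\hat p))$ and $\beta(t) = \hat\phi_t(R_h(\hat p)) = \hat\phi_t(\hat p\cdot h)$. Both satisfy $\alpha(0) = \beta(0) = \hat p\cdot h$. For $\beta$ we have $\beta'(t) = (\hat\nabla\hat f)(\beta(t))$ by definition of the flow. For $\alpha$, differentiate using the chain rule: $\alpha'(t) = (R_h)_{*}\big((\hat\nabla\hat f)(\hat\phi_t(\hat p))\big) = (\hat\nabla\hat f)(R_h(\hat\phi_t(\hat p))) = (\hat\nabla\hat f)(\alpha(t))$, where the middle equality is precisely $(R_h)_{*}\hat\nabla\hat f = \hat\nabla\hat f$. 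Thus $\alpha$ and $\beta$ solve the same autonomous ODE with the same initial condition, so by uniqueness of solutions they agree wherever both are defined, in particular for all $t\in(a,b)$.

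I do not anticipate a serious obstacle here; the only points requiring a little care are (i) checking that $(R_h)_{*}\hat\nabla\hat f = \hat\nabla\hat f$ genuinely follows from $R_h$ being an isometry fixing $\hat f$ — this is the elementary fact that gradients are natural under isometries, $d\hat f = \hat g(\hat\nabla\hat f,\cdot)$ together with $R_h^{*}\hat g = \hat g$ and $R_h^{*}d\hat f = d\hat f$ — and (ii) making sure the curve $\hat\phi_t(\hat p)$ and its image under $R_h$ stay within the chart $\hat U$ on the full interval $(a,b)$ so that the ODE-uniqueness argument applies over all of $(a,b)$ rather than just a small subinterval; this is exactly what the hypothesis $\pi(\hat\phi_t(\hat p))\in M_{\mathrm{reg}}$ for $t\in(a,b)$, combined with the previous lemma, is there to supply.
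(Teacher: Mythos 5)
Your proof is correct, but it follows a genuinely different route from the paper's. The paper splits into cases: if $\hat{\nabla}\hat{f}(\hat{p})=0$ it uses the $\Gamma$-invariance of $\hat f$ and the isometry of the action to see both sides are the constant curve; otherwise it projects both curves down to $M_{\mbox{reg}}$, uses Lemma \ref{lem-regualar point stay regular} (i.e.\ $\pi_{\ast}\hat{\nabla}\hat{f}=\nabla f$ near regular points) to identify both projections with the integral curve $\phi_t(p)$ of $\nabla f$ on the manifold $M_{\mbox{reg}}$, and then concludes by uniqueness of integral curves upstairs. Your argument instead stays entirely in $\hat U$: since $\hat f=f\circ\pi$ is $\Gamma$-invariant and $R_h$ is an isometry, $(R_h)_{\ast}\hat{\nabla}\hat{f}=\hat{\nabla}\hat{f}$, so $t\mapsto R_h(\hat\phi_t(\hat p))$ and $t\mapsto\hat\phi_t(\hat p\cdot h)$ solve the same ODE with the same initial value and coincide on $(a,b)$. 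This is cleaner and strictly more general: as you observe, the hypothesis $\pi(\hat\phi_t(\hat p))\in M_{\mbox{reg}}$ is not needed for the equivariance argument (only that the trajectory exists in $\hat U$ on $(a,b)$), so your proof simultaneously establishes Lemma \ref{lem-commutative lemma-singular}, which the paper has to treat separately by approximating a singular initial point by regular ones and passing to the limit. What the paper's route buys is that it only invokes the already-proved Lemma \ref{lem-regualar point stay regular} rather than the (elementary but unstated) naturality of gradients and equivariance of flows, but your version of that step is carefully justified, so there is no gap.
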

\begin{proof}
If $\hat{\nabla} \hat{f}(\hat{p})=0$, then $\hat{\phi}_{t}(\hat{p})=\hat{p}$ for any $t\in(-\infty,+\infty)$. For any $x\in \hat{U}$, we have 
\begin{align}
   \hat{f}(x\cdot h)=f(\pi(x\cdot h))=f(\pi(x))=\hat{f}(x) .
\end{align}
Note that $h$ acts isometrically on $(\hat{U},\hat{g})$. It follows that 
\begin{align}
    |\hat{\nabla} \hat{f}|(x\cdot h)=|\hat{\nabla} \hat{f}|(x),~\forall~x\in \hat{U}.
\end{align} 
Therefore, $|\hat{\nabla} \hat{f}|(\hat{p}\cdot h)=0$ and $\hat{\phi}_{t}(\hat{p}\cdot h)=\hat{p}\cdot h$ for any $t\in(-\infty,+\infty)$.

Hence, we get 
\begin{align}
\hat{\phi}_{t}(\hat{p})\cdot h=\hat{p}\cdot h=\hat{\phi}_{t}(\hat{p}\cdot h),~\forall~t\in (-\infty,+\infty).
\end{align}

Now, we assume that $\hat{\nabla} \hat{f}(\hat{p})\neq0$. We first assume that $p\in M_{reg}$. Let $\phi_t$ be generated by $\nabla f$. Then, $|\nabla f|(p)\neq 0$.
Since $h\in \Gamma$, we have $\pi(\hat{p})=\pi(\hat{p}\cdot h)=p$.  By Lemma \ref{lem-regualar point stay regular}, we have 
\begin{align}\label{eq-1}
\pi(\hat{\phi}_t(\hat{p}))=\phi_t(p),~\forall~t\in  (a,b)
\end{align}
and 
\begin{align}\label{eq-2}
\pi(\hat{\phi}_t(\hat{p}\cdot h))=\phi_t(p),~\forall~t\in  (a,b).
\end{align}
By (\ref{eq-1}) and the definition of $\pi$, we have 
\begin{align}\label{eq-3}
\pi(\hat{\phi}_t(\hat{p})\cdot h)=\phi_t(p),~\forall~t\in (a,b).
\end{align}
By (\ref{eq-2}) and (\ref{eq-3}), we see that $\hat{\phi}_t(\hat{p}\cdot h)$ and  $\hat{\phi}_t(\hat{p})\cdot h$ are both the integral curve of $\hat{\nabla}\hat{f}$ with initial point $\hat{q}\cdot h$. By the uniqueness of the integral curve, we get  $\hat{\phi}_t(\hat{p}\cdot h)=\hat{\phi}_t(\hat{p})\cdot h$ for $t\in(-\varepsilon_1,\varepsilon_1)$. 

We complete the proof.

\end{proof}

\begin{lem}\label{lem-singular point stay in singular set}
Under the assumption of Lemma \ref{lem-regualar point stay regular}, if $|\hat{\nabla} \hat{f}|(\hat{p})\neq0$ and $p\in M_{\mbox{sing}}$, then there is a constant $\varepsilon>0$ so that 
\begin{align}
\pi(\hat{\phi}_t(\hat{p}))\in M_{\mbox{sing}},~\forall~t\in(-\varepsilon,\varepsilon).
\end{align}
\end{lem}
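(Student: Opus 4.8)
The plan is to argue by contradiction using the companion statement Lemma~\ref{lem-regualar point stay regular}. Suppose $p\in M_{\mbox{sing}}$ and $|\hat{\nabla}\hat{f}|(\hat{p})\neq 0$, yet no such $\varepsilon$ exists. Then there is a sequence $t_i\to 0$ with $q_i:=\pi(\hat{\phi}_{t_i}(\hat{p}))\in M_{\mbox{reg}}$. Write $\hat{q}_i=\hat{\phi}_{t_i}(\hat{p})$. Since $p$ is fixed by the subgroup $\Gamma_p=\{h\in\Gamma:\hat{p}\cdot h=\hat{p}\}$, which is nontrivial as $p\in M_{\mbox{sing}}$, I would like to transport this action along the flow. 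The technical point is that $\hat{\phi}_t$ commutes with the $\Gamma$-action on $\hat{U}$ because $\hat{f}=f\circ\pi$ is $\Gamma$-invariant and $\Gamma$ acts isometrically, so $\hat{\nabla}\hat{f}$ is a $\Gamma$-invariant vector field; hence $\hat{\phi}_t(\hat{x}\cdot h)=\hat{\phi}_t(\hat{x})\cdot h$ for all $h\in\Gamma$ and all $t$ for which the flow is defined (this is the unconditional version of the identity proved in Lemma~\ref{lem-commutative lemma}; its proof there only needs uniqueness of integral curves for the $\Gamma$-invariant field $\hat{\nabla}\hat{f}$, with no regularity hypothesis, once one works directly upstairs on $\hat{U}$). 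Applying this with $h\in\Gamma_p$ gives $\hat{q}_i\cdot h=\hat{\phi}_{t_i}(\hat{p}\cdot h)=\hat{\phi}_{t_i}(\hat{p})=\hat{q}_i$, so $\Gamma_p\subset\Gamma_{\hat{q}_i}$, which forces $q_i\in M_{\mbox{sing}}$, a contradiction.

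To make this airtight I would first record the commutation identity as the key lemma: for every $h\in\Gamma$ and every $\hat{x}\in\hat{U}$, $\hat{\phi}_t(\hat{x}\cdot h)=\hat{\phi}_t(\hat{x})\cdot h$ on the common interval of existence. The proof is the standard naturality of flows under symmetries: $R_h:\hat{U}\to\hat{U}$, $\hat{x}\mapsto\hat{x}\cdot h$, is an isometry (Definition~\ref{def-Riem orbifold}(1)) and $\hat{f}\circ R_h=\hat{f}$ (since $\pi\circ R_h=\pi$), so $(R_h)_*\hat{\nabla}\hat{f}=\hat{\nabla}\hat{f}$; therefore $t\mapsto R_h(\hat{\phi}_t(\hat{x}))$ and $t\mapsto\hat{\phi}_t(R_h(\hat{x}))$ are both integral curves of $\hat{\nabla}\hat{f}$ through $\hat{x}\cdot h$, and uniqueness finishes it. This replaces the case-by-case argument in Lemma~\ref{lem-commutative lemma} with a single clean statement and removes the need for the hypothesis $\pi(\hat{\phi}_t(\hat{p}))\in M_{\mbox{reg}}$.

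Then the main argument proceeds as above. One subtlety worth isolating: I need $\Gamma_{\hat{q}}=\{e\}$ to be equivalent to $\pi(\hat{q})\in M_{\mbox{reg}}$, and more precisely that $\Gamma_p\subset\Gamma_{\hat q}$ whenever $\hat{q}$ is fixed by $\Gamma_p$; this is immediate from the definitions of $\Gamma_x$ and $M_{\mbox{sing}}$ recalled in Section~\ref{section-basics}. Another point: the flow $\hat{\phi}_t$ is a priori only locally defined near $\hat{p}$, but since we only use small $t$ this causes no trouble — choose the common domain of $\hat{\phi}_\cdot$ on a $\Gamma_p$-invariant neighborhood of $\hat{p}$ (possible because $\hat{\nabla}\hat{f}$ is $\Gamma$-invariant, so its local flow has a $\Gamma_p$-invariant domain).

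The step I expect to be the main obstacle is precisely the unconditional commutation identity: in Lemma~\ref{lem-commutative lemma} the authors prove it only under the running hypothesis that the projected orbit stays regular, which is circular for our purposes. The resolution is to observe that one does not need to pass through $M_{\mbox{reg}}$ at all — working entirely on the manifold $\hat{U}$ with the $\Gamma$-invariant vector field $\hat{\nabla}\hat{f}$, naturality of flows under the isometries $R_h$ gives the identity for free. Once that is in hand, the proof of this lemma is a two-line contradiction argument, and in fact the same observation lets one drop the regularity hypothesis from Lemma~\ref{lem-commutative lemma} as well.
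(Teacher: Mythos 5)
Your proof is correct, but it takes a genuinely different route from the paper's. The paper argues by contradiction inside the framework of its conditional commutation lemma: assuming $q=\pi(\hat{\phi}_{\varepsilon}(\hat{p}))$ is regular, it flows backwards from $\hat{q}$ to the first singular time $-\varepsilon_1$, picks $h\neq e$ fixing $\hat{\phi}_{-\varepsilon_1}(\hat{q})$, and uses Lemma \ref{lem-commutative lemma} on the regular stretch together with a limit as $t\to-\varepsilon_1$ to force $\hat{q}\cdot h=\hat{q}$, contradicting regularity of $q$. You instead prove the unconditional equivariance $\hat{\phi}_t(\hat{x}\cdot h)=\hat{\phi}_t(\hat{x})\cdot h$ directly on $\hat{U}$: since $\pi\circ R_h=\pi$ one has $\hat{f}\circ R_h=\hat{f}$, and since $R_h$ is an isometry (Definition \ref{def-Riem orbifold}(1)) the field $\hat{\nabla}\hat{f}$ is $R_h$-invariant, so uniqueness of integral curves gives the identity with no hypothesis on where the projected orbit lies; applying it with $h\in\Gamma_p\setminus\{e\}$ and $\hat{p}\cdot h=\hat{p}$ yields $\Gamma_p\subset\Gamma_{\hat{\phi}_t(\hat{p})}$, so the orbit stays singular. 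Your diagnosis that the conditional hypothesis in Lemma \ref{lem-commutative lemma} would be circular here, and your resolution of it, are accurate, and your argument in fact needs neither the contradiction/sequence framing nor the hypothesis $|\hat{\nabla}\hat{f}|(\hat{p})\neq0$, and it gives the conclusion on the whole interval of existence rather than a small $(-\varepsilon,\varepsilon)$ (the worry about a $\Gamma_p$-invariant flow domain is moot since you only flow from the single fixed point $\hat{p}$). What the paper's route buys is that it relies only on the already-established conditional lemma; what yours buys is one clean equivariance statement that would subsume Lemma \ref{lem-commutative lemma} and Lemma \ref{lem-commutative lemma-singular} and streamline Proposition \ref{prop-Gamma preserved}.
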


\begin{proof}
    We prove by contradiction. Suppose there exists a constant $\varepsilon$ so that $\pi(\hat{\phi}_{\varepsilon}(\hat{p}))\in M_{\mbox{reg}}$. We may assume that $\varepsilon>0$. Let $\hat{q}=\hat{\phi}_{\varepsilon}(\hat{p})$ and $q=\pi(\hat{q})$. Note that $q\in M_{\mbox{reg}}$. Since $|\hat{\nabla} \hat{f}|(\hat{p})\neq0$, we get that $|\hat{\nabla} \hat{f}|(\hat{\phi}_{\varepsilon}(\hat{p}))\neq0$. Therefore, $|\nabla f|(q)=|\hat{\nabla} \hat{f}|(\hat{\phi}_{\varepsilon}(\hat{p}))\neq0$.  By Lemma \ref{lem-regualar point stay regular} and the fact that $\pi(\hat{\phi}_{-\varepsilon}(\hat{q}))\in M_{\mbox{sing}}$,   there exists a constant $\varepsilon_1$ such that $0<\varepsilon_1\le\varepsilon$ and \begin{align}
        \phi_{-t}(q)\in M_{\mbox{reg}},~\forall~0\le t<\varepsilon_1.
    \end{align} 
    and 
    \begin{align}\label{eq-4}
        \pi(\hat{\phi}_{-\varepsilon_1}(\hat{q}))\in M_{\mbox{sing}}.
    \end{align} 
Since $p$ is a singular point, $\Gamma\neq \{e\}$. By (\ref{eq-4}), there exists $h\in \Gamma$ such that  $h\neq e$ and 
\begin{align}\label{eq-5}
  \hat{\phi}_{-\varepsilon_1}(\hat{q})\cdot h=\hat{\phi}_{-\varepsilon_1}(\hat{q}).  
\end{align} 
By Lemma \ref{lem-commutative lemma}, we get 
\begin{align}\label{eq-6}
\hat{\phi}_{-\varepsilon_1}(\hat{q}\cdot h)=\lim_{t\to-\varepsilon_1}\hat{\phi}_t(\hat{q}\cdot h)=\lim_{t\to-\varepsilon_1}\hat{\phi}_t(\hat{q})\cdot h=\hat{\phi}_{-\varepsilon_1}(\hat{q})\cdot h
\end{align}
By (\ref{eq-5}) and (\ref{eq-6}), we get $\hat{\phi}_{-\varepsilon_1}(\hat{q}\cdot h)=\hat{\phi}_{-\varepsilon_1}(\hat{q})$. Therefore, $\hat{q}\cdot h=\hat{q}$. It contradicts the fact that $q$ is a smooth point.
    
\end{proof}

An immediate corollary of the lemma is the following result.
\begin{cor}
Suppose $(\mathcal{M},g,f)$  is a gradient Ricci soliton on an $n$-dimensional Riemannian orbifold. Suppose $(U,\hat{U},\pi,\Gamma)$ is   a chart  around $p\in U$ along with $\hat{p}\in \hat{U}$ project to $p$. Let $\hat{f}$ be the smooth function such that $\hat{f}=f\circ \pi$ on $\hat{U}$. Let $\hat{\phi}_t$ be generated by $\hat{\nabla} \hat{f}$.  If $p$ is an isolated singular point, then $|\hat{\nabla}\hat{f}|(\hat{p})=0$.
\end{cor}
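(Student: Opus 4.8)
The plan is to deduce this directly from Lemma~\ref{lem-singular point stay in singular set} and Lemma~\ref{lem-regualar point stay regular} by a contradiction argument. Suppose $p$ is an isolated singular point but $|\hat\nabla\hat f|(\hat p)\neq 0$. Since $p$ is singular, $p\in M_{\mbox{sing}}$, so Lemma~\ref{lem-singular point stay in singular set} applies: there is $\varepsilon>0$ with $\pi(\hat\phi_t(\hat p))\in M_{\mbox{sing}}$ for all $t\in(-\varepsilon,\varepsilon)$. On the other hand, because $|\hat\nabla\hat f|(\hat p)\neq 0$, the integral curve $t\mapsto\hat\phi_t(\hat p)$ is a genuinely moving curve near $t=0$ (its velocity is nonzero), so $\pi(\hat\phi_t(\hat p))$ traces out a nonconstant curve; shrinking $\varepsilon$ if necessary we get a continuum of points, all lying in $M_{\mbox{sing}}$, accumulating at $p=\pi(\hat\phi_0(\hat p))$.

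The key step is then to observe that this contradicts the isolatedness of $p$. Concretely, I would argue that the map $t\mapsto\pi(\hat\phi_t(\hat p))$ is continuous and injective on a small interval $(-\varepsilon',\varepsilon')$ — injectivity coming from the fact that $|\hat\nabla\hat f|$ stays nonzero near $\hat p$ (by continuity) combined with the uniqueness of integral curves and the local structure of $\pi$, so that the image curve does not instantly return to $p$. Hence for small $t\neq 0$ we obtain points $\pi(\hat\phi_t(\hat p))\neq p$ in $M_{\mbox{sing}}$ arbitrarily close to $p$, contradicting the hypothesis that $p$ is an isolated point of $M_{\mbox{sing}}$. This gives $|\hat\nabla\hat f|(\hat p)=0$.

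The main obstacle I anticipate is making the "nonconstant curve" assertion fully rigorous at the level of the topological space $M$ rather than the chart $\hat U$: one must ensure that the motion of $\hat\phi_t(\hat p)$ in $\hat U$ actually descends to motion of $\pi(\hat\phi_t(\hat p))$ in $M$, i.e.\ that $\hat\phi_t(\hat p)$ does not move only along a $\Gamma$-orbit. But if $\hat\phi_t(\hat p)$ stayed in the $\Gamma$-orbit of $\hat p$ on an interval, then since that orbit is finite and the curve is continuous with $\hat\phi_0(\hat p)=\hat p$, we would have $\hat\phi_t(\hat p)=\hat p$ identically on that interval, forcing $\hat\nabla\hat f(\hat p)=0$ — again the desired conclusion. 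So in all cases we are done. Alternatively, and perhaps more cleanly, one can simply invoke Lemma~\ref{lem-singular point stay in singular set} as a black box for the contrapositive: if $|\hat\nabla\hat f|(\hat p)\neq 0$ and $p\in M_{\mbox{sing}}$, then a whole arc through $p$ lies in $M_{\mbox{sing}}$; since $M_{\mbox{sing}}$ is locally the image of a finite union of submanifolds of $\hat U$ (as recalled in Section~\ref{section-basics}), a $1$-dimensional arc can only sit inside $M_{\mbox{sing}}$ if $p$ lies on a positive-dimensional stratum, contradicting isolatedness. I would write up whichever of these is shortest once the precise statement of Lemma~\ref{lem-singular point stay in singular set} is fixed.
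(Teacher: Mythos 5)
Your proposal is correct and is essentially the paper's own argument: Lemma~\ref{lem-singular point stay in singular set} keeps $\pi(\hat{\phi}_t(\hat{p}))$ in $M_{\mbox{sing}}$ for small $t$, and the finiteness of the orbit $\Gamma\cdot\hat{p}$ together with continuity of the integral curve forces the lift to be constant, giving $\hat{\nabla}\hat{f}(\hat{p})=0$. The only difference is organizational: since $p$ is isolated, the image curve is automatically constant equal to $p$, so the paper goes directly to your ``obstacle'' case, while your nonconstant/injective branch never actually occurs (and the stratum-based alternative you sketch at the end would need that same constancy discussion to be airtight, since a constant arc lies in $M_{\mbox{sing}}$ without contradicting isolatedness).
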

\begin{proof}
If $|\hat{\nabla}\hat{f}|(\hat{p})\neq0$, then we can find $\varepsilon>0$ such that 
\begin{align}
\pi(\hat{\phi}_t(\hat{p}))\in M_{\mbox{sing}},~\forall~t\in(-\varepsilon,\varepsilon),\notag
\end{align}
where $\hat{\phi}_t$ is generated by $\hat{\nabla}\hat{f}$. Since $p$ is an isolated singular point, it follows that $\pi(\hat{\phi}_t(\hat{p}))=p$ for all $t\in (-\varepsilon,\varepsilon)$. Then, $\hat{\phi}_t(\hat{p})=\hat{p}\cdot h$ for all $t\in (-\varepsilon,\varepsilon)$ and some $h\in \Gamma$. Then, $|\hat{\nabla}\hat{f}|(\hat{p})=0$. It contradicts the assumption that $|\hat{\nabla}\hat{f}|(\hat{p})\neq0$. Hence, $|\hat{\nabla}\hat{f}|(\hat{p})=0$.
\end{proof}

\begin{lem}\label{lem-commutative lemma-singular}
Suppose $(\mathcal{M},g,f)$  is a gradient Ricci soliton on an $n$-dimens-ional Riemannian orbifold. Suppose $(U,\hat{U},\pi,\Gamma)$ is   a chart  around $p\in U$ along with $\hat{p}\in \hat{U}$ project to $p$. Let $\hat{f}$ be the smooth function such that $\hat{f}=f\circ \pi$ on $\hat{U}$. Let $\hat{\phi}_t$ be generated by $\hat{\nabla} \hat{f}$. If  $\pi(\hat{\phi}_t(\hat{p}))\in M_{sing}$ for all $t\in (a,b)$, then, for any $h\in \Gamma$, we have  
\begin{align}
\hat{\phi}_{t}(\hat{p})\cdot h=\hat{\phi}_{t}(\hat{p}\cdot h),~\forall~t\in (a,b).
\end{align}
\end{lem}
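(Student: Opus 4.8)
The plan is to follow the same strategy used in the proof of Lemma \ref{lem-commutative lemma}, but with the roles of the regular and singular loci exchanged, reducing everything to the uniqueness of integral curves of $\hat{\nabla}\hat f$ on $\hat U$. First I would dispose of the trivial case $\hat{\nabla}\hat f(\hat p)=0$ exactly as before: since $\hat f = f\circ\pi$ is $\Gamma$-invariant and each $h\in\Gamma$ acts isometrically on $(\hat U,\hat g)$, we get $|\hat{\nabla}\hat f|(\hat p\cdot h)=|\hat{\nabla}\hat f|(\hat p)=0$, so both $\hat\phi_t(\hat p)$ and $\hat\phi_t(\hat p\cdot h)$ are the constant curves $\hat p$ and $\hat p\cdot h$ respectively, and the identity $\hat\phi_t(\hat p)\cdot h = \hat p\cdot h = \hat\phi_t(\hat p\cdot h)$ holds for all $t$.

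Next, assume $\hat{\nabla}\hat f(\hat p)\neq 0$. The key point is that $\hat\phi_t(\hat p)\cdot h$ is itself an integral curve of $\hat{\nabla}\hat f$ with initial point $\hat p\cdot h$: this is because $h$ is an isometry and $\hat f$ is $h$-invariant, so $h_*(\hat{\nabla}\hat f) = \hat{\nabla}\hat f$, hence $h$ pushes integral curves of $\hat{\nabla}\hat f$ to integral curves of $\hat{\nabla}\hat f$. On the other hand, $\hat\phi_t(\hat p\cdot h)$ is by definition the integral curve of $\hat{\nabla}\hat f$ starting at $\hat p\cdot h$. By the uniqueness of integral curves of a smooth vector field on $\hat U$, the two curves agree on the whole interval $(a,b)$ (indeed wherever both are defined), which is exactly the claimed identity $\hat\phi_t(\hat p)\cdot h = \hat\phi_t(\hat p\cdot h)$.

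I would remark that the hypothesis $\pi(\hat\phi_t(\hat p))\in M_{\mathrm{sing}}$ for $t\in(a,b)$ is not actually needed for this argument — the uniqueness of integral curves on $\hat U$ gives the commutation identity regardless — but it is stated to parallel Lemma \ref{lem-commutative lemma} and is harmless. If one wants to genuinely use it (for instance to argue via Lemma \ref{lem-singular point stay in singular set} in the spirit of the regular-case proof, projecting down and invoking uniqueness downstairs), one can, but the direct lift-and-uniqueness argument on $\hat U$ is cleaner and avoids any case distinction on whether $\hat p$ itself is a fixed point of $h$.

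The main (and only real) subtlety is making sure the interval of agreement is correct: a priori $\hat\phi_t(\hat p)$ and $\hat\phi_t(\hat p\cdot h)$ are defined on possibly different maximal intervals, so I would phrase the conclusion as an identity of curves on the common domain and note that by $h$-equivariance these domains in fact coincide. No curvature or soliton structure beyond the isometric $\Gamma$-action and the $\Gamma$-invariance of $\hat f$ enters, so there is no serious obstacle; the proof is a short application of ODE uniqueness together with the equivariance already exploited in Lemma \ref{lem-commutative lemma}.
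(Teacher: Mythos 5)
Your argument is correct, but it proves the lemma by a genuinely different and more direct route than the paper. The paper's proof approximates $\hat p$ by points $\hat p_i$ with $\pi(\hat p_i)\in M_{\mbox{reg}}$ (possible since the regular locus is dense), applies the regular-locus commutation statement of Lemma \ref{lem-commutative lemma} to each $\hat p_i$ on a small uniform time interval (using Lemma \ref{lem-regualar point stay regular} and Lemma \ref{lem-singular point stay in singular set} to keep the projected curves in $M_{\mbox{reg}}$), and then passes to the limit $i\to\infty$ using continuous dependence of the flow on initial data; this is exactly why the hypothesis $\pi(\hat\phi_t(\hat p))\in M_{\mbox{sing}}$ appears, mirroring the regular case. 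You instead observe that each $h\in\Gamma$ is an isometry of $(\hat U,\hat g)$ (Definition \ref{def-Riem orbifold}) preserving $\hat f=f\circ\pi$, hence $h_*(\hat\nabla\hat f)=\hat\nabla\hat f$, so $h$ carries integral curves to integral curves and ODE uniqueness gives $\hat\phi_t(\hat p)\cdot h=\hat\phi_t(\hat p\cdot h)$ on the common (in fact, by equivariance, identical) maximal interval. Both ingredients you use are already invoked in the paper's proof of Lemma \ref{lem-commutative lemma}, so nothing new is assumed; your route is more elementary, avoids the approximation and continuity-in-initial-conditions step, makes the singular-locus hypothesis superfluous (as you correctly note), and in fact subsumes the regular-case Lemma \ref{lem-commutative lemma} as well, whereas the paper's route has the mild virtue of reusing the previously established lemmas verbatim. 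Your handling of the zero-gradient case and of the domains of definition is also correct, so there is no gap.
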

\begin{proof}
If $\hat{\nabla} \hat{f}(\hat{p})=0$, then the proof is the same as the proof of Lemma \ref{lem-commutative lemma}.

Now, we may assume that $p\in M_{sing}$ and $\hat{\nabla} \hat{f}(\hat{p})\neq0$. It suffices to show that for any $t_0\in (a,b)$, there exists a constant $\varepsilon_0$ such that
\begin{align}
\hat{\phi}_{t}(\hat{p})\cdot h=\hat{\phi}_{t}(\hat{p}\cdot h),~\forall~t\in (t_0-\varepsilon_0,t_0+\varepsilon_0).\notag
\end{align}
Choose $\hat{p}_i$ such that   $\pi({\hat{p}_i})\in M_{reg}$ and $\hat{p}_i\to \hat{p}$ as $i\to\infty$. When $i$ large, $\hat{\phi}_t(\hat{p}_i)$ exists for $t\in (-\varepsilon_0,\varepsilon_0)$ for some $\varepsilon_0>0$. Since $\pi({\hat{p}_i})\in M_{reg}$, it follows that $\pi(\hat{\phi}_t({\hat{p}_i}))\in M_{reg}$ for $t\in (-\varepsilon_0,\varepsilon_0)$ by Lemma \ref{lem-regualar point stay regular} and Lemma \ref{lem-singular point stay in singular set}. Then,  we have $\hat{\phi}_t(\hat{p}_i\cdot h)=\hat{\phi}_t(\hat{p}_i)\cdot h$ for $t\in(-\varepsilon_0,\varepsilon_0)$ by Lemma \ref{lem-commutative lemma}. By taking $i\to\infty$, we get $\hat{\phi}_t(\hat{p}\cdot h)=\hat{\phi}_t(\hat{p})\cdot h$ for $t\in(-\varepsilon_0,\varepsilon_0)$.  We complete the proof.

\end{proof}

Let  $\phi_t$ be generated by $\nabla f$. If $p\in M_{\mbox{reg}}$, then $\phi_t(p)\in M_{\mbox{reg}}$ as long as $\phi_t(p)$ exists by Lemma \ref{lem-regualar point stay regular} and Lemma \ref{lem-singular point stay in singular set}. If $\phi_t(p)$ exists for all $t\in(-\infty,+\infty)$ and for all $p\in M_{reg}$, then $\phi_t$ is a diffeomorphism from $M_{reg}$ to $M_{reg}$. We hope to extend this diffeomorphism to be a diffeomorphism from $\mathcal{M}$ to $\mathcal{M}$.
So, we define $\phi_t(p)$ for $p\in M_{\mbox{sing}}$ in the following. 

  \begin{defi}\label{def-phi-local}
  Let $(\mathcal{M},g,f)$  be a gradient Ricci soliton on an $n$-dimensional Riemannian orbifold. For any $p\in M$, suppose $(U,\hat{U},\pi,\Gamma)$ is   a chart  around $p\in U$ along with $\hat{p}\in \hat{U}$ project to $p$. Let $\hat{f}$ be the smooth function such that $\hat{f}=f\circ \pi$ on $\hat{U}$. Let $\hat{\phi}_t$ be generated by $\hat{\nabla} \hat{f}$.  Then,  we define $\phi_t(p)=\pi(\hat{\phi}_t(\hat{p}))$ in chart $(U,\hat{U},\pi,\Gamma)$. 
  \end{defi}

  \begin{prop}\label{prop-independent of charts}
    $\phi_t(p)$ in Definition \ref{def-phi-local} is independent of the choice of charts.
  \end{prop}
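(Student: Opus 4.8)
The plan is to show that two local definitions of $\phi_t(p)$, coming from two charts around $p$, agree. Let $(U_1,\hat U_1,\pi_1,\Gamma_1)$ and $(U_2,\hat U_2,\pi_2,\Gamma_2)$ be two charts around $p$, with $\hat p_1\in\hat U_1$, $\hat p_2\in\hat U_2$ projecting to $p$, and with lifted potentials $\hat f_1,\hat f_2$ and gradient flows $\hat\phi^1_t,\hat\phi^2_t$. First I would invoke the compatibility axiom (5) of Definition \ref{orbifold}: shrinking to a common subchart, there is a chart $(U_3,\hat U_3,\pi_3,\Gamma_3)$ with $p\in U_3\subset U_1\cap U_2$ and isometric embeddings $\hat\psi_1\colon(\hat U_3,\Gamma_3)\to(\hat U_1,\Gamma_1)$, $\hat\psi_2\colon(\hat U_3,\Gamma_3)\to(\hat U_2,\Gamma_2)$ with $\pi_i\circ\hat\psi_i=\pi_3$ near $p$. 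So it suffices to compare the chart $(U_3,\hat U_3,\pi_3,\Gamma_3)$ with each $(U_i,\hat U_i,\pi_i,\Gamma_i)$ through the embedding $\hat\psi_i$; i.e. I reduce to the case where one chart embeds isometrically into the other.

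So assume $\hat\psi\colon(\hat V,\Gamma')\hookrightarrow(\hat U,\Gamma)$ is an isometric embedding of local models with $\pi\circ\hat\psi=\pi'$, and $\hat\psi(\hat p')=\hat p_0$ for some lift $\hat p_0$ of $p$ in $\hat U$. Since $\hat\psi$ is an isometry onto its (open) image and $\hat f=f\circ\pi$, $\hat f'=f\circ\pi'=f\circ\pi\circ\hat\psi=\hat f\circ\hat\psi$, so $\hat\psi_*(\hat\nabla\hat f')=\hat\nabla\hat f$ along $\hat\psi(\hat V)$. By uniqueness of integral curves, $\hat\psi(\hat\phi'_t(\hat p'))=\hat\phi_t(\hat p_0)$ for all small $t$, hence $\pi'(\hat\phi'_t(\hat p'))=\pi(\hat\psi(\hat\phi'_t(\hat p')))=\pi(\hat\phi_t(\hat p_0))$, which is the desired equality of the two local definitions at the lift $\hat p_0$.

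The remaining subtlety is that the definition $\phi_t(p)=\pi(\hat\phi_t(\hat p))$ in a single chart should not depend on which lift $\hat p$ of $p$ is chosen. Two lifts of $p$ in $\hat U$ differ by an element $h\in\Gamma$, i.e. the second is $\hat p\cdot h$. Here I would use the equivariance Lemmas \ref{lem-commutative lemma} and \ref{lem-commutative lemma-singular}: on whichever of the three time-intervals is relevant (the trajectory stays in $M_{\mathrm{reg}}$, or stays in $M_{\mathrm{sing}}$, these being the only options by Lemmas \ref{lem-regualar point stay regular} and \ref{lem-singular point stay in singular set}), we have $\hat\phi_t(\hat p\cdot h)=\hat\phi_t(\hat p)\cdot h$, and applying $\pi$ (which is $\Gamma$-invariant) gives $\pi(\hat\phi_t(\hat p\cdot h))=\pi(\hat\phi_t(\hat p))$. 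One should also note that a chart around $p$ used to define $\phi_t(p)$ may be shrunk without changing $\hat\phi_t(\hat p)$ for $|t|$ small, since the integral curve of $\hat\nabla\hat f$ through $\hat p$ is determined locally; this justifies passing to the common subchart $U_3$ in the first step. Combining the two steps, $\phi_t(p)$ computed in $U_1$ equals $\phi_t(p)$ computed in $U_3$ equals $\phi_t(p)$ computed in $U_2$, for $|t|$ small, and the equality for all $t$ (in the complete case) follows by the usual continuation argument.

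The main obstacle is bookkeeping rather than depth: one must be careful that the equivariance lemmas are stated for trajectories that remain entirely in $M_{\mathrm{reg}}$ or entirely in $M_{\mathrm{sing}}$, so before quoting them one invokes Lemmas \ref{lem-regualar point stay regular} and \ref{lem-singular point stay in singular set} to know that, on a short enough time interval, the trajectory $\pi(\hat\phi_t(\hat p))$ indeed stays in one stratum type (regular or singular). With that dichotomy in hand, the lift-independence and the chart-compatibility both reduce to uniqueness of integral curves together with the $\Gamma$-equivariance already established.
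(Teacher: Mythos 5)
Your proof is correct and follows essentially the same route as the paper: pass to a common subchart via axiom (5) of Definition \ref{orbifold}, observe that the isometric embedding intertwines the lifted potentials $\hat f_\gamma=\hat f_\alpha\circ\hat\varphi_{\gamma\alpha}$ and hence their gradient flows by uniqueness of integral curves, and then project down. Your additional check that the local definition does not depend on the chosen lift $\hat p$ (via the equivariance Lemmas \ref{lem-commutative lemma} and \ref{lem-commutative lemma-singular}, after using Lemmas \ref{lem-regualar point stay regular} and \ref{lem-singular point stay in singular set} to fix the stratum type) is a point the paper leaves implicit, and you handle it correctly.
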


  \begin{proof}
  Suppose $(U_{\alpha},\hat{U}_{\alpha},\pi_{\alpha},\Gamma_{\alpha})$ and  $(U_{\beta},\hat{U}_{\beta},\pi_{\beta},\Gamma_{\beta})$ are two charts around $p$. By Definition \ref{orbifold}, there is a local chart $(U_{\gamma},\hat{U}_{\gamma},\pi_{\gamma},\Gamma_{\gamma})$ with $p\in U_{\gamma}\subset U_{\alpha}\cap U_{\beta}$ and embeddings $\hat{\phi}_{\gamma\alpha}:(\hat{U}_{\gamma},\Gamma_{\gamma})\to(\hat{U}_{\alpha},\Gamma_{\alpha})$ and $\hat{\phi}_{\gamma\beta}:(\hat{U}_{\gamma},\Gamma_{\gamma})\to(\hat{U}_{\beta},\Gamma_{\beta})$ such that the following diagram commutes:

     $$\begin{CD}
          \hat{U}_{\alpha}@<\hat{\varphi}_{\gamma\alpha}<< \hat{U}_{\gamma}@>\hat{\varphi}_{\gamma\beta}>>\hat{U}_{\beta}\\
         @VV\pi_{\alpha} V@VV\pi_{\gamma}V@VV\pi_{\beta}V\\
         U_{\alpha}@<\supset<< U_{\gamma}@>\subset>>U_{\beta}
     \end{CD}$$
  
   Then, we get $\pi_{\gamma}=\pi_{\alpha}\circ \hat{\varphi}_{\gamma\alpha}$ on $\hat{U}_{\gamma}$. It follows that 
   \begin{align}
\hat{f}_{\gamma}=f\circ\pi_{\gamma}=f\circ(\pi_{\alpha}\circ \hat{\varphi}_{\gamma\alpha})=(f\circ\pi_{\alpha})\circ \hat{\varphi}_{\gamma\alpha}=\hat{f}_{\alpha}\circ\hat{\varphi}_{\gamma\alpha},~\mbox{on}~\hat{U}_{\gamma}. 
\end{align}
Note that $\hat{\varphi}_{\gamma\alpha}$ is a Riemannian embedding. Then, we have
\begin{align}
    (\hat{\varphi}_{\gamma\alpha})_{\ast}(\hat{\nabla} \hat{f}_{\gamma})=\hat{\nabla} \hat{f}_{\alpha}.
\end{align}
Therefore,
\begin{align}
    \hat{\varphi}_{\gamma\alpha}\Big[(\phi_{\gamma})_t(\hat{p}_{\gamma})\Big]=(\phi_{\alpha})_t(\hat{p}_{\alpha})
\end{align}
It follows that 

\begin{align}\label{eq-alpha=gamma}
    \pi_{\alpha}((\phi_{\alpha})_t(\hat{p}_{\alpha}))=(\pi_{\alpha}\circ \hat{\varphi}_{\gamma\alpha})\Big[(\phi_{\gamma})_t(\hat{p}_{\gamma})\Big]=\pi_{\gamma}((\phi_{\gamma})_t(\hat{p}_{\gamma}))
\end{align}
Similarly, we have 
\begin{align}\label{eq-beta=gamma}
    \pi_{\beta}((\phi_{\beta})_t(\hat{p}_{\beta}))=\pi_{\gamma}((\phi_{\gamma})_t(\hat{p}_{\gamma})).
\end{align}
Combing (\ref{eq-alpha=gamma}) with (\ref{eq-beta=gamma}), we get 
\begin{align}
    \pi_{\alpha}((\phi_{\alpha})_t(\hat{p}_{\alpha}))=\pi_{\beta}((\phi_{\beta})_t(\hat{p}_{\beta})).
\end{align}
We complete the proof.
  \end{proof}

\begin{prop}\label{prop-Gamma preserved}
    Let $\phi_t(p)$ be the curve defined in Definition \ref{def-phi-local}. Then, there exists a constant $\varepsilon>0$ such that\footnote{See Page 5 for the definition of $\Gamma_{p}$.}
    \begin{align}\label{monotonicity of Gamma}
\Gamma_{\phi_t(p)}=\Gamma_{p},~\forall~t\in ~(a,b),
    \end{align}
    as long as $\phi_t(p)\in U$ for $t\in(a,b)$.
    As a result, in the fundermental chart  $(U,\hat{U},\pi,\Gamma_p)$ of $p$, we have
    \begin{align}
\pi^{-1}(\phi_t(p))=\hat{\phi}_t(\hat{p}),~\forall~t\in(a,b),
    \end{align}
    as long as  $\phi_t(p)\in U$ for $t\in(a,b)$.
\end{prop}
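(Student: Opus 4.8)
The plan is to prove the two assertions in tandem by a propagation argument along the flow $\hat\phi_t$. First I would set up notation: fix the local model $(\hat U,\Gamma_p)$ given on Page~5 with $U_p=\hat U/\Gamma_p$ and $\hat p\in\hat U$ projecting to $p$, and let $\hat\phi_t$ be generated by $\hat\nabla\hat f$. The key point is a dichotomy based on whether $p$ is regular or singular, together with the fact (Theorem~\ref{theo-segment}-style structure, and Lemma~\ref{lem-regualar point stay regular}, Lemma~\ref{lem-singular point stay in singular set}) that the flow line $\pi(\hat\phi_t(\hat p))$ stays in $M_{\mathrm{reg}}$ if $p\in M_{\mathrm{reg}}$ and stays in $M_{\mathrm{sing}}$ if $p\in M_{\mathrm{sing}}$, at least for small time, and hence on all of $(a,b)$ by a standard connectedness/open-closed argument.

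Second, I would prove the stabilizer identity. If $p\in M_{\mathrm{reg}}$ then $\Gamma_p=\{e\}$ and the whole trajectory lies in $M_{\mathrm{reg}}$ (by Lemma~\ref{lem-regualar point stay regular} extended to $(a,b)$), so $\Gamma_{\phi_t(p)}=\{e\}=\Gamma_p$ trivially. If $p\in M_{\mathrm{sing}}$, I claim $\Gamma_{\hat\phi_t(\hat p)}=\Gamma_{\hat p}=\Gamma_p$ for all $t\in(a,b)$. The inclusion $\Gamma_{\hat p}\subseteq\Gamma_{\hat\phi_t(\hat p)}$ follows from Lemma~\ref{lem-commutative lemma-singular}: for $h\in\Gamma_{\hat p}$ we have $\hat\phi_t(\hat p)\cdot h=\hat\phi_t(\hat p\cdot h)=\hat\phi_t(\hat p)$. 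For the reverse inclusion, suppose $h\in\Gamma_{\hat\phi_{t_0}(\hat p)}$ for some $t_0$; then $\hat\phi_{t_0}(\hat p\cdot h)=\hat\phi_{t_0}(\hat p)\cdot h=\hat\phi_{t_0}(\hat p)$ again by Lemma~\ref{lem-commutative lemma-singular}, and since $\hat\phi_{t_0}$ is a diffeomorphism of $\hat U$ (for $t_0$ in the existence interval) with inverse $\hat\phi_{-t_0}$, applying $\hat\phi_{-t_0}$ gives $\hat p\cdot h=\hat p$, i.e. $h\in\Gamma_{\hat p}$. Thus $\Gamma_{\hat\phi_t(\hat p)}=\Gamma_{\hat p}$ for every $t$, and since the isomorphism class (indeed, as a subgroup of $\Gamma_p$ acting on the fundamental chart, the group itself) of the point stabilizer equals $\Gamma_{\phi_t(p)}$, we conclude $\Gamma_{\phi_t(p)}=\Gamma_p$ for all $t\in(a,b)$.

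Third, for the ``as a result'' clause, work in the fundamental chart $(U,\hat U,\pi,\Gamma_p)$, where $\pi^{-1}(q)=\{\,x\cdot h : h\in\Gamma_p\,\}$ for the unique $\Gamma_p$-orbit over $q$. We already know $\hat\phi_t(\hat p)\in\pi^{-1}(\phi_t(p))$ by the very definition of $\phi_t$. Conversely, by Lemma~\ref{lem-commutative lemma} / Lemma~\ref{lem-commutative lemma-singular}, every element of $\pi^{-1}(\phi_t(p))$ is of the form $\hat\phi_t(\hat p)\cdot h=\hat\phi_t(\hat p\cdot h)$ for some $h\in\Gamma_p$, hence is the time-$t$ image under $\hat\phi_t$ of a point $\hat p\cdot h\in\pi^{-1}(p)$; so $\pi^{-1}(\phi_t(p))=\hat\phi_t(\pi^{-1}(p))$. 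Combined with $\Gamma_{\phi_t(p)}=\Gamma_p$, the orbit $\pi^{-1}(\phi_t(p))$ has the same cardinality $|\Gamma_p|/|\Gamma_p\cap\text{isotropy}|=|\Gamma_p|$ as $\pi^{-1}(\hat p)$'s image under $\hat\phi_t$, which forces $\pi^{-1}(\phi_t(p))=\hat\phi_t(\hat p)$ as subsets (both being the single $\Gamma_p$-orbit through $\hat\phi_t(\hat p)$, and the statement $\pi^{-1}(\phi_t(p))=\hat\phi_t(\hat p)$ should be read as this orbit, i.e. $\pi^{-1}(\phi_t(p))=\Gamma_p\cdot\hat\phi_t(\hat p)=\hat\phi_t(\Gamma_p\cdot\hat p)$).

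The main obstacle I anticipate is the passage from ``small $t$'' to the full interval $(a,b)$: one needs that the properties ``$\phi_t(p)\in M_{\mathrm{reg}}$'' (resp. ``$\in M_{\mathrm{sing}}$'') and the identity $\Gamma_{\phi_t(p)}=\Gamma_p$ persist along the whole trajectory inside $U$, which requires a connectedness argument showing the set of $t$ where they hold is open and closed in $(a,b)$. Openness comes from Lemmas~\ref{lem-regualar point stay regular}--\ref{lem-singular point stay in singular set} applied at each intermediate time (recentering the chart at $\phi_{t_1}(p)$), and closedness comes from the fact that $M_{\mathrm{sing}}$ is closed while, along a trajectory staying in $M_{\mathrm{sing}}$, the stabilizer cannot drop at a limit point by the diffeomorphism argument above; one must be slightly careful that the intermediate charts can be chosen compatibly so that the lifted flow $\hat\phi_t$ in the original chart still tracks $\phi_t(p)$, which is exactly where Proposition~\ref{prop-independent of charts} is used.
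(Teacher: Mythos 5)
Your proposal is essentially the paper's own argument: the stabilizer cannot grow along the flow because the equivariance identities of Lemma \ref{lem-commutative lemma} and Lemma \ref{lem-commutative lemma-singular} together with injectivity of $\hat\phi_{t_0}$ force $\hat p\cdot h=\hat p$, the opposite inclusion follows because every $h\in\Gamma_p$ fixes $\hat p$ and hence $\hat\phi_t(\hat p)$ (the paper phrases this via the time-reversed flow from $q=\phi_t(p)$, which is the same idea), and the second clause is exactly the observation $\hat\phi_t(\hat p)\cdot h=\hat\phi_t(\hat p\cdot h)=\hat\phi_t(\hat p)$ in the fundamental chart. One small correction: your orbit count $|\Gamma_p|/|\Gamma_p\cap\text{isotropy}|=|\Gamma_p|$ should be $=1$, since in the fundamental chart $\Gamma_p$ is the full isotropy of $\hat p$ (and, by what you proved, of $\hat\phi_t(\hat p)$), so $\pi^{-1}(\phi_t(p))$ is literally the singleton $\{\hat\phi_t(\hat p)\}$ rather than a nontrivial orbit, and no reinterpretation of the statement is needed.
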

\begin{proof}
If $|\hat{\nabla}\hat{f}|(\hat{p})=0$, then the result is trivial. So, we may suppose $|\hat{\nabla}\hat{f}|(\hat{p})\neq0$. We first claim that the following holds 
    \begin{align}
 \Gamma_{\phi_t(p)} \subset \Gamma_{p},~\forall~t\in ~(-\varepsilon,\varepsilon),
\end{align}
as long as $\phi_t(p)\in U$ for $t\in(-\varepsilon,\varepsilon)$.

Suppose not. Then,  there exist $t_0\in (-\varepsilon,\varepsilon)$  and $h\notin \Gamma_p$ such that 
\begin{align}
\hat{\phi}_{t_0}(\hat{p})\cdot h=\hat{\phi}_{t_0}(\hat{p}).
\end{align}
By Lemma \ref{lem-commutative lemma} and Lemma \ref{lem-commutative lemma-singular}, we have 
\begin{align}
\hat{\phi}_{t_0}(\hat{p})\cdot h=\hat{\phi}_{t_0}(\hat{p}\cdot h)
\end{align}
It follows that 
\begin{align}
   \hat{\phi}_{t_0}(\hat{p})= \hat{\phi}_{t_0}(\hat{p}\cdot h)
\end{align}
Hence, we get $\hat{p}=\hat{p}\cdot h$. It contradicts the fact that $h\notin \Gamma_p$.

Now, let $q=\phi_t(p)$ and $\tau=-t$. By the claim, we have 
 \begin{align}
\Gamma_p=\Gamma_{\phi_{\tau}(q)} \subset \Gamma_{q}=\Gamma_{\phi_t(p)},~\forall~t\in (-\varepsilon,\varepsilon).
\end{align}
Hence, we get 
\begin{align}
\Gamma_p=\Gamma_{\phi_t(p)},~\forall~t\in ~(-\varepsilon,\varepsilon),
\end{align}
as long as $\phi_t(p)\in U$ for $t\in(-\varepsilon,\varepsilon)$. Since we can choose $\varepsilon$ arbitrarily, it is easy to see that 
\begin{align}
\Gamma_p=\Gamma_{\phi_t(p)},~\forall~t\in (a,b),
\end{align}

Now, we suppose $(U,\hat{U},\pi,\Gamma_p)$ is a fundamental chart of $p$ and $\phi_t(p)\in U$ for all $t\in(a,b)$. For any fixed $t_0\in (a,b)$, we have 
\begin{align}
\Gamma_{\phi_{t_0}(p)}=\Gamma_p.
\end{align}
Therefore, for any $h\in \Gamma_p$, we have 
\begin{align}
    \hat{\phi}_t(\hat{p})\cdot h=\hat{\phi}_t(\hat{p}\cdot h)=\hat{\phi}_t(\hat{p}).
\end{align}

\end{proof}

In Definition \ref{def-phi-local}, we define $\phi_t(p)$ in one chart. Now, we define $\phi_t(p)$ on $M$. 

\begin{defi}\label{def-phi}
  Let $(\mathcal{M},g,f)$  be a gradient Ricci soliton on an $n$-dimensional Riemannian orbifold. For any $p\in M$, let $\phi_t(p)=\phi_{t_m}(\phi_{t_{m-1}}(\cdots\phi_{t_1}(p)\cdots))$ if the following hold:
  
  (1) $t=t_1+t_2+\cdots+t_m$ and $t\cdot t_{i}\ge 0$ for all $1\le i\le m$;

  (2) For $1\le i\le m$, there exists a sequence of fundamental charts $(U_i,\hat{U}_i,\pi_i,\Gamma_i)$ such that  $p_1=p$, $p_{i+1}=\phi_{t_{i}}(p_i)$  and $\phi_{s}(p_i)\in U_i$  when $s\in[0,t_i]$, where $\phi_{s}(p_i)\in U_i$ is defined in Definition \ref{def-phi-local} .
\end{defi}

We need to show Definition \ref{def-phi} is well-defined.

\begin{theo}
 Definition \ref{def-phi} is well-defined.
\end{theo}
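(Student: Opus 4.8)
The plan is to show that the composite $\phi_{t_m}\circ\cdots\circ\phi_{t_1}(p)$ does not depend on either the chosen decomposition $t=t_1+\cdots+t_m$ or on the intermediate fundamental charts. First I would reduce to a single local statement: whenever $q\in U$ for a fundamental chart $(U,\hat U,\pi,\Gamma_q)$ and $\phi_s(q)\in U$ for all $s\in[0,\sigma]$, the value $\phi_\sigma(q)$ computed inside this chart is the unique lift-and-project of the flow of $\hat\nabla\hat f$; this is precisely Proposition \ref{prop-independent of charts} (chart independence) together with Proposition \ref{prop-Gamma preserved}, which says that along the curve $\Gamma_{\phi_s(q)}=\Gamma_q$, so that $\hat\phi_s(\hat q)$ descends unambiguously and $\pi^{-1}(\phi_s(q))=\hat\phi_s(\hat q)$ as a full $\Gamma_q$-orbit. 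Thus within the portion of the trajectory that stays inside one fundamental chart, $\phi$ is an honest local flow: $\phi_{s_1+s_2}(q)=\phi_{s_2}(\phi_{s_1}(q))$ whenever all three orbit pieces remain in $U$, and $\phi_0=\mathrm{id}$.

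Next I would handle the concatenation. Given two admissible decomposition-plus-chart data $(t_i,U_i)_{i=1}^m$ and $(t_j',U_j')_{j=1}^{m'}$ for the same $t$ and same $p$, I would argue that both compute the same final point by a subdivision/overlap argument. Because the sign condition $t\cdot t_i\ge 0$ forces all steps to move monotonically in the same time direction, the two partitions of $[0,t]$ (say $t>0$; the case $t<0$ is symmetric) have a common refinement $0=s_0<s_1<\cdots<s_N=t$. On each subinterval $[s_{k-1},s_k]$ the trajectory lies inside \emph{some} $U_i$ from the first family and inside \emph{some} $U_j'$ from the second; by the local flow property established above plus chart independence (Proposition \ref{prop-independent of charts}), the point reached after $[s_{k-1},s_k]$ is the same whether computed in $U_i$ or $U_j'$. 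Composing over $k=1,\dots,N$ and using the semigroup identity on each chart shows the two full composites agree. The base point of the $(k{+}1)$-st step is the endpoint of the $k$-th, so there is no ambiguity in "which lift $\hat p$" to take: Proposition \ref{prop-Gamma preserved} guarantees $\Gamma$ is preserved, hence the fibre $\pi^{-1}(\mathrm{point})$ is a single orbit and the choice of $\hat p$ is irrelevant.

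The main obstacle is the bookkeeping at the junction points where the trajectory passes from one fundamental chart to another, in particular when it crosses the singular locus: one must be sure that the local-flow composition law $\phi_{s_2}\circ\phi_{s_1}=\phi_{s_1+s_2}$ is valid \emph{across} such a junction and not only strictly inside one chart. This is exactly where Lemma \ref{lem-commutative lemma}, Lemma \ref{lem-commutative lemma-singular}, Lemma \ref{lem-regualar point stay regular} and Lemma \ref{lem-singular point stay in singular set} do the work: they ensure that $\pi(\hat\phi_t(\hat p))$ stays on the correct stratum (regular stays regular, singular stays singular) so that the refinement $[s_{k-1},s_k]$ can always be chosen small enough to sit inside a single $U_i$ and a single $U_j'$ simultaneously, and that $\hat\phi_t$ is $\Gamma$-equivariant on the overlap so the two lifts glue. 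Once that is in place, the argument is a routine common-refinement induction, and I would present it as such rather than spelling out every junction.
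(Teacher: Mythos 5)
Your proposal is correct and follows essentially the same route as the paper: the paper also passes to a common refinement $\{D_k\}$ of the two partitions of $[0,t]$, uses the semigroup property of the lifted flows $\hat{\Phi}^i$ inside each chart to split each step into the refined subintervals, and then runs an induction on $k$ in which Proposition \ref{prop-independent of charts} gives agreement of the two chart computations on each subinterval. The only difference is presentational: you isolate the junction/stratum issues via Lemmas \ref{lem-regualar point stay regular}--\ref{lem-commutative lemma-singular} explicitly, which the paper leaves implicit in its bookkeeping.
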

\begin{proof}
It suffices to deal with the case that $t\cdot t_i>0$ for all $1\le i\le m$ in Definition \ref{def-phi}.  Without loss of generality, we may assume $t>0$. Supose the following hold:

(1) $t=t_1+t_2+\cdots+t_m$ and $t\cdot t_{i}> 0$ for all $1\le i\le m$;

  (2) there exists a chart $(U_i,\hat{U}_i,\hat{\pi}_i,\Gamma_i)$ such that $p_i,p_{i+1}\in U_i$ for every $1\le i\le m$, where $p_1=p$ and $p_{i+1}=\hat{\pi}_{i}(\hat{\Phi}^i_{t_{i}}( \hat{p}_i))$ with $\hat{\pi}_{i}( \hat{p}_i)=p_i$ for every $1\le i\le m$. Each $\hat{\Phi}^i_{s}( \hat{p}_i)$ is generated by $\hat{\nabla} (f\circ \hat{\pi}_i)$ here.
  
  (3) $t=s_1+s_2+\cdots+s_l$ and $t\cdot s_{j}> 0$ for all $1\le j\le l$;

  (4) there exists a chart $(V_j,\tilde{V}_j,\tilde{\pi}_j,\Gamma_j)$ such that $q_j,q_{j+1}\in V_j$ for every $1\le j\le l$, where $q_1=p$ and $q_{j+1}=\tilde{\pi}_{j}(\tilde{\Psi}^j_{t_{j}}( \tilde{q}_j))$ with $\tilde{\pi}_{j}( \tilde{q}_j)=p_j$ for every $1\le j\le l$. Each $\tilde{\Psi}^j_{s}( \tilde{q}_j)$ is generated by $\tilde{\nabla} (f\circ \tilde{\pi}_j)$ here. 

  Now, we need to show that $p_{m+1}=q_{l+1}$.

  Let $T_i=t_1+t_2+\cdots+t_i$ for $1\le i\le m$ and $S_j=s_1+s_2+\cdots+s_j$ for $1\le j\le l$. Set $T_0=S_0=0$. Then, $\{[T_i,T_{i+1}]\}_{0\le i\le m-1}$  and $\{[S_j,S_{j+1}]\}_{0\le j\le l-1 }$ are both decompositions of  $[0,t]$. Let $\{[D_k,D_{k+1}]\}_{0\le k\le N}$ be a refinement decomposition of $[0,t]$ such that $D_0=0$, $D_N=t$, $D_k<D_{k+1}$ for $0\le k\le N-1$ and $\{D_0,D_1,\cdots,D_N\}=\{S_0,S_1,\cdots,S_l\}\cup\{T_0,T_1,\cdots,T_m\}$. Let $d_k=D_k-D_{k-1}$ for $1\le k\le N$. 
  
  By the choice of $D_k$, there exist constants $k_1$, $k_2$, $\cdots$, $k_m$ such that $D_{k_i}=T_i$ for $1\le i\le m$. Then, $k_m=N$. Now, we let $k_0=0$. For $1\le i\le m$ and ${k_{i-1}}+1\le k\le k_{i}$, we define 
\begin{equation}\begin{cases}
      x_{k_{i-1}+1}=p_{i},&~\mbox{when} ~k=k_{i-1}+1,\\
      x_{k}=\Phi^i_{d_{k-1}}(x_{k-1}),&~\mbox{when} ~k_{i-1}+2\le k\le k_{i},
  \end{cases}
  \end{equation}
  where  $\hat{\Phi}^i_s(\hat{p}_i)$ is the integral curve of $\hat{\nabla} (f\circ \hat{\pi}_i)$ in $\hat{U}_i$ and $\Phi^i_s(x)=\hat{\pi}_i(\hat{\Phi}^i_s(\hat{x}))$ if $x=\hat{\pi}_i(\hat{x})$. Now, we fix $i$. Note that 
 \begin{align}
d_{k_{i-1}+1}+d_{k_{i-1}+2}+\cdots+d_{k_i}=D_{k_i}-D_{k_{i-1}}=T_i-T_{i-1}=t_i.
 \end{align}
  Since $\hat{\Phi}^i_s(\hat{p}_i)$ is the integral curve of $\hat{\nabla} (f\circ \hat{\pi}_i)$ in $\hat{U}_i$ when  $s\in[0,t_i]$, we have 
 \begin{align}
\hat{\Phi}^i_{t_i}(\hat{p}_i)=\hat{\Phi}^i_{d_{k_{i-1}+1}+d_{k_{i-1}+2}+\cdots+d_{k_i}}(\hat{p}_i)=\hat{\Phi}^i_{d_{k_i}}(\hat{\Phi}^i_{d_{k_i-1}}(\cdots\hat{\Phi}^i_{d_{k_{i-1}+1}}(\hat{p}_i)\cdots))
 \end{align}
 It follows that 
 \begin{align}
    x_{k_i+1}=p_{i+1}={\Phi}^i_{t_i}({p}_i)={\Phi}^i_{d_{k_i}}({\Phi}^i_{d_{k_i-1}}(\cdots{\Phi}^i_{d_{k_{i-1}+1}}({p}_i)\cdots))
 \end{align}
 Then, we have 
 \begin{align}
 x_{k_i+1}={\Phi}^i_{d_{k_i}}(x_{k_i}),~\forall~1\le i\le m.
 \end{align}
Note that $k_m=N$. Then, we get
\begin{align}
x_{k+1}=\Phi_{d_{k}}(x_k),~\forall~1\le k\le N.
  \end{align}
 where $\Phi_{d_k}(\cdot)=\Phi^i_{d_k}(\cdot)$ if $k_{i-1}+1\le k\le k_{i}$.

On the other hand, by the choice of $D_k$, there exist constants $\bar{k}_1$, $\bar{k}_2$, $\cdots$, $\bar{k}_l$ such that $D_{\bar{k}_j}=S_j$ for $1\le j\le l$. Then, $\bar{k}_l=N$. Let $\bar{k}_0=0$.  For $1\le j\le l$ and ${\bar{k}_{j-1}}+1\le k\le \bar{k}_{j}$, we define 
\begin{equation}\begin{cases}
      y_{\bar{k}_{j-1}+1}=p_{j},&~\mbox{when} ~k=\bar{k}_{j-1}+1,\\
      y_{k}=\Psi^i_{d_{k-1}}(y_{k-1}),&~\mbox{when} ~k_{j-1}+2\le k\le k_{j},
  \end{cases}
  \end{equation}
  where  $\tilde{\Psi}^i_s(\tilde{p}_i)$ is the integral curve of $\tilde{\nabla} (f\circ \tilde{\pi}_i)$ in $\tilde{V}_j$ and $\Psi^i_s(x)=\tilde{\pi}_i(\tilde{\Psi}^i_s(\tilde{x}))$ if $x=\tilde{\pi}_i(\tilde{x})$. Similarly, we  can show that
\begin{align}
y_{k+1}
=\Psi_{d_{k}}(y_k),~\forall~1\le k\le N.
  \end{align}
 where $\Psi_{d_k}(\cdot)=\Psi^j_{d_k}(\cdot)$ if $\bar{k}_{j-1}+1\le k\le \bar{k}_{j}$.

 Now, it suffices to show that 
 \begin{align}
     x_{k_m+1}=y_{\bar{k}_l+1}.
 \end{align}
 
We claim that $x_k=y_k$ for all $1\le k\le N+1$. We prove the claim by induction on $k$. When $k=1$, we have $x_1=p=y_1$ by assumption. Suppose $x_k=y_k$. Then, there exist an integer $i$ and $j$ such that 
\begin{align}
    k_{i-1}+1\le k+1\le k_i.
\end{align}
Then, we have 
\begin{equation}
      x_{k+1}=\hat{\pi}_i(\hat{\Phi}^i_{d_k}(\hat{x}_k)),
  \end{equation}
  where  $\hat{\Phi}^i_{s}(\hat{x}_k)$ is the integral curve of $\hat{\nabla} (f\circ \hat{\pi}_i)$ with  $\hat{\Phi}^i(\hat{x}_k)=x_k$  and $\hat{\Phi}^i_{s}(\hat{x}_k)\in\hat{U}_i$ for $s\in[0, d_k]$.

  Similarly, there exist an integer $j$ such that 
  \begin{align}
    \bar{k}_{j-1}+1\le k+1\le \bar{k}_j.
\end{align}
Then, we have 
\begin{equation}    y_{k+1}=\tilde{\pi}_i(\tilde{\Psi}^i_{d_k}(\tilde{y}_k)),
  \end{equation}
  where  $\tilde{\Psi}^i_{s}(\tilde{y}_k)$ is the integral curve of $\tilde{\nabla} (f\circ \tilde{\pi}_i)$ with $\tilde{\Psi}^i(\tilde{y}_k)=y_k$  and $\tilde{\Psi}^i_{s}(\tilde{y}_k)\in\hat{V}_j$ for $s\in[0, d_k]$.

    By Proposition \ref{prop-independent of charts}, we have $x_{k+1}=y_{k+1}$. 
We complete the proof.
    
\end{proof}

  \begin{lem}\label{lem-admissable}
       $\phi_t(p)$, $t\in [a,b]$ is an admissible curve.
  \end{lem}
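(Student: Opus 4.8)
The plan is to show that on any small time interval the curve $\phi_t(p)$ stays inside a single fundamental chart and is admissible there, and then patch these pieces together using the decomposition built into Definition \ref{def-phi}. Fix $p\in M$ and the interval $[a,b]$. By Definition \ref{def-phi} we may write $[a,b]$ (after translating the time parameter) as a finite concatenation of subintervals $[T_{i-1},T_i]$, $1\le i\le m$, on each of which $\phi_t(p_i)$ lies in a fundamental chart $(U_i,\hat U_i,\pi_i,\Gamma_i)$ and equals $\pi_i(\hat\phi^i_t(\hat p_i))$, where $\hat\phi^i_t$ is the flow of $\hat\nabla(f\circ\pi_i)$. Since admissibility is defined chart-by-chart and is clearly preserved under finite concatenation of admissible curves, it suffices to prove the statement on one such subinterval, i.e. to show: if $\hat\gamma(t)=\hat\phi^i_t(\hat p_i)$ is the integral curve of $\hat\nabla(f\circ\pi_i)$ in $\hat U_i$ for $t$ in a closed interval, then $\pi_i\circ\hat\gamma$ is admissible in the chart $U_i$.

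The key point for the single-chart claim is that the trajectory $\hat\gamma$ interacts with the stratification of $\hat U_i$ in a very controlled way. By Lemma \ref{lem-regualar point stay regular}, Lemma \ref{lem-singular point stay in singular set} and Proposition \ref{prop-Gamma preserved}, the isotropy group $\Gamma_{\pi_i(\hat\gamma(t))}$ is \emph{constant} in $t$ — either the orbit point is regular for all $t$, or singular for all $t$, and in the latter case its stabilizer subgroup $H\subset\Gamma_i$ does not change. Thus $\hat\gamma$ is entirely contained in a single stratum $U_H=\{x\in\hat U_i:\Gamma_x=H\}$ of $\hat U_i$ (the argument: the stabilizer of $\hat\gamma(t)$ is conjugate to $\Gamma_{\pi_i(\hat\gamma(t))}$ and, because $\hat\gamma$ is a continuous lift of a curve with locally constant isotropy type passing through $\hat p_i$ whose stabilizer is exactly $H$, Lemma \ref{lem-commutative lemma} and Lemma \ref{lem-commutative lemma-singular} force $\hat\gamma(t)$ to have stabilizer exactly $H$ for all $t$). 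By Proposition 32 and Remark 33 of \cite{Borz}, $U_H$ is a (connected totally geodesic) submanifold of $\hat U_i$; hence the trivial decomposition $\{[a,b]\}$ already exhibits $\pi_i\circ\hat\gamma$ as admissible in $U_i$, since the whole curve lies in the single stratum $U_H$.

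Putting the pieces together: on $[a,b]$ we have a finite partition $a=T_0<T_1<\cdots<T_m=b$ such that on each $[T_{i-1},T_i]$ the curve $\phi_t(p)$, expressed in the chart $U_i$, lies in a single stratum and is therefore admissible there; for any other chart $U$ meeting $\phi_{[a,b]}(p)$, one covers $\phi_{[a,b]}(p)\cap U$ by finitely many of these chart-pieces and refines the partition, using Proposition \ref{prop-independent of charts} to see that the lifted pieces agree on overlaps and each lies in a single stratum of $U$. This gives a countable (indeed finite) decomposition of $[a,b]$ with each piece in a single stratum in every relevant chart, which is exactly the definition of an admissible curve.

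The main obstacle is the single-chart step, specifically making precise that the integral curve $\hat\gamma$ really is trapped in one stratum $U_H$ rather than merely having locally constant isotropy type along the projection: this requires combining the ``stays regular'' and ``stays singular'' lemmas with the equivariance Lemmas \ref{lem-commutative lemma} and \ref{lem-commutative lemma-singular} to rule out the stabilizer jumping to a different subgroup of the same isomorphism type, and then invoking the submanifold structure of strata from \cite{Borz}. Everything else — translation of the time parameter, finite concatenation, refinement of partitions across overlapping charts — is bookkeeping that the framework of Definition \ref{def-phi} and Proposition \ref{prop-independent of charts} has already set up.
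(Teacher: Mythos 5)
Your proposal is correct and follows essentially the same route as the paper: decompose $[a,b]$ into the finitely many subintervals supplied by Definition \ref{def-phi}, each lying in a fundamental chart, and then use Proposition \ref{prop-Gamma preserved} (constancy of the isotropy group $\Gamma_{\phi_s(p_i)}=\Gamma_{p_i}$ along each piece) to conclude each piece sits in a single stratum, hence the curve is admissible. The extra care you take about agreement in overlapping charts via Proposition \ref{prop-independent of charts} is additional bookkeeping beyond what the paper records, but the core argument is identical.
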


  \begin{proof}
  By Definition \ref{def-phi},    the following hold:
  
  (1) $b-a=t_1+t_2+\cdots+t_m$ and $t\cdot t_{i}\ge 0$ for all $1\le i\le m$;

  (2) there exists a sequence of fundamental charts $(U_i,\hat{U}_i,\pi_i,\Gamma_i)$ such that $p_i,p_{i+1}\in U_i$ for every $1\le i\le m$, where $p_1=\phi_a(p)$, $p_{i+1}=\phi_{t_{i}}(p_i)$ for $1\le i\le m$.

  By Proposition \ref{prop-Gamma preserved}, 
  \begin{align}
      \Gamma_{\phi_s(p_i)}=\Gamma_{p_i},~\forall~s\in~[0,t_i].
  \end{align}
  Then, $\phi_t|_{(t_i,t_{i+1})}$ is contained in a single stratum associated with $\Gamma_{p_i}\subset \Gamma_i$.
  Hence, $\phi_t(p)$, $t\in [a,b]$ is an admissible curve.

  \end{proof}

  To introduce the Ricci flow on $(M,g,f)$, it remains to show that $\phi_t(p)$ exists for all $p\in M$ and $t\in(-\infty,+\infty)$. We first prove the long time existence  when $R\ge -C$ for some positive constant $C$.

\begin{lem}\label{lem-completeness when R has lower bound}
   Suppose 
 $(\mathcal{M},g,f)$ is a complete Ricci soliton and $R\ge -C$ for some positive constant $C$. Let $\phi_t$ be generated by $\nabla f$. If $p\in M$, then $\phi_t(p)$ exists for all $t\in (-\infty,+\infty)$. 
\end{lem}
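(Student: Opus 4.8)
The plan is to adapt the classical argument for long-time existence of the gradient flow of $f$ on a complete Ricci soliton. Recall that on any gradient Ricci soliton one has the identity $R + |\nabla f|^2 - \lambda f = \text{const}$ (after normalizing $f$). Combined with the hypothesis $R \ge -C$, this yields $|\nabla f|^2 \le \lambda f + C'$ for some constant, so $|\nabla f|$ grows at most linearly in $f$, which in turn (via $\frac{d}{dt} f(\phi_t(p)) = |\nabla f|^2$) controls the growth of $f$ along the flow lines and hence the speed of the flow. The point is that on the regular locus everything reduces to the smooth manifold statement, and near singular points one works in a fixed orbifold chart where $\hat\phi_t$ is the ordinary gradient flow of $\hat f$ on the manifold-with-group-action $(\hat U, \hat g)$.

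First I would fix $p \in M$ and, by Definition \ref{def-phi}, reduce to showing that the locally-defined curve $\phi_t(p)$ cannot escape to ``infinity'' in finite time, i.e. that the maximal interval of existence is all of $\mathbb{R}$. Suppose for contradiction that $\phi_t(p)$ is defined only on a maximal interval with finite right endpoint $T$. Using the soliton identity in any chart, $|\hat\nabla \hat f|^2 = \hat R + \lambda \hat f + a$ for a constant $a$ (depending only on the chart, but one can use a single global normalization since $f$ is a global smooth function on $\mathcal{M}$); together with $R \ge -C$ this gives $|\nabla f|^2 \le \lambda f + C_1$ on all of $M$. Then along $\phi_t(p)$ we have $\frac{d}{dt} f(\phi_t(p)) = |\nabla f|^2(\phi_t(p)) \le \lambda f(\phi_t(p)) + C_1$, so by Gr\"onwall $f(\phi_t(p))$ stays bounded on $[0,T)$, whence $|\nabla f|(\phi_t(p))$ stays bounded, say by $L$. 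Therefore the curve has speed at most $L$, and by the admissibility of $\phi_t(p)$ (Lemma \ref{lem-admissable}) its length on $[0, T-\delta]$ is at most $L(T-\delta)$; since $(O,d)$ is a complete length space, $\phi_t(p)$ has a limit point $q \in M$ as $t \to T^-$. Around $q$ pick a fundamental chart $(U_q, \hat U_q, \pi_q, \Gamma_q)$; the lift $\hat\phi_t$ of the flow near $q$ is the ordinary ODE flow of $\hat\nabla \hat f$ on the open set $\hat U_q$, and since the approaching curve stays in a compact subset of $U_q$ (hence its lift in a compact subset of $\hat U_q$) with bounded speed, standard ODE theory extends $\hat\phi_t$ past $T$. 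Pushing down by $\pi_q$ and invoking Proposition \ref{prop-independent of charts} and Definition \ref{def-phi} to concatenate, this contradicts the maximality of $T$. The same argument handles $t \to -\infty$, completing the proof.

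The main obstacle I anticipate is the transition through the singular set: one must make sure that the limit point $q$ is actually attained in $M$ (this is where completeness of the length space $(O,d)$ is essential, plus the bound on the length of the admissible curve), and that once $q$ is reached one may genuinely re-start the flow in a fundamental chart around $q$ and glue the pieces consistently. This is precisely what the machinery built in Definition \ref{def-phi-local} through Lemma \ref{lem-admissable} (independence of charts, the $\Gamma$-equivariance lemmas, and well-definedness of the concatenated flow) is designed to supply, so once those are in hand the gluing step is routine; the only genuinely analytic input is the Gr\"onwall estimate, which is identical to the smooth-manifold case treated by Zhang.

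\begin{proof}
See the plan above; the details are as follows. Normalize the potential so that, in every orbifold chart, $|\hat\nabla \hat f|^2 = \hat R + \lambda \hat f + a$ for a fixed constant $a$ (this is the standard soliton identity, valid on $\hat U_\alpha$ and compatible across charts since $\hat f_\alpha = f\circ\pi_\alpha$). Combined with $R\ge -C$ this gives a global bound $|\nabla f|^2 \le \lambda f + C_1$ on $M$. Fix $p\in M$ and let $[0,T)$ be the maximal interval on which $\phi_t(p)$ is defined, with $T<\infty$ for contradiction. Along the curve, $\frac{d}{dt} f(\phi_t(p)) = |\nabla f|^2(\phi_t(p)) \le \lambda f(\phi_t(p)) + C_1$, so Gr\"onwall's inequality bounds $f(\phi_t(p))$ on $[0,T)$, and hence $|\nabla f|(\phi_t(p)) \le L$ for some $L>0$. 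By Lemma \ref{lem-admissable} the curve is admissible with speed at most $L$, so it has finite length on $[0,T)$; since $(O,d)$ is complete, $\phi_t(p)\to q$ for some $q\in M$ as $t\to T^-$. In a fundamental chart $(U_q,\hat U_q,\pi_q,\Gamma_q)$ about $q$, Proposition \ref{prop-Gamma preserved} identifies the lift of $\phi_t(p)$ with $\hat\phi_t$ of $\hat\nabla\hat f$ on $\hat U_q$; this lift stays in a compact subset of $\hat U_q$ with bounded velocity, so by the standard ODE existence theorem $\hat\phi_t$ extends to an interval $[T-\delta, T+\delta)$. Projecting by $\pi_q$ and appending this piece to the concatenation in Definition \ref{def-phi} (well-definedness being guaranteed by Proposition \ref{prop-independent of charts}) extends $\phi_t(p)$ past $T$, contradicting maximality. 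Hence $T=+\infty$; the argument for $t\to -\infty$ is identical. Thus $\phi_t(p)$ exists for all $t\in(-\infty,+\infty)$.
\end{proof}
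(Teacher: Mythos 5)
Your proof is correct and follows essentially the same route as the paper: the soliton identity $R+|\nabla f|^2\pm\lambda f=\mathrm{const}$ (extended to all of $M$ by density of $M_{\mbox{reg}}$) plus $R\ge -C$ gives a Gr\"onwall bound on $f$, hence on $|\nabla f|$, along flow lines, and completeness together with the chart machinery (admissibility, chart-independence) rules out finite-time escape, exactly as in the paper's adaptation of Chow's Lemma 2.28. One small slip to fix: in the final paragraph you wrote $|\hat\nabla \hat f|^2=\hat R+\lambda\hat f+a$, whereas the identity has $R$ on the same side as $|\nabla f|^2$ (as in your plan), which is precisely what turns the lower bound $R\ge -C$ into the upper bound $|\nabla f|^2\le \lambda f+C_1$.
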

\begin{proof}
This lemma is due to Bennett Chow(See Theorem 2.27 in \cite{Chow}).If $|\nabla f(p)|=0$, then $\phi_t(p)=p$ for all $t\in (-\infty,+\infty)$. It remains to deal with the case that  $|\nabla f(p)|\neq0$.  By  Lemma \ref{lem-admissable}, $\phi_t(p)$ is an admissible curve. So, one can compute the length of $\phi_t(p)$ for $t\in [a,b]$ in the local chart even if $p\in M_{sing}$. Note that $f(\phi_t(p))$ and $|\nabla f|^2(\phi_t(p))$ are both smooth functions on $t$ for any fixed $p\in M$. So, the argument of of Lemma 2.28 in \cite{Chow} remains to be valid on orbifolds.

It is well known that 
\begin{align}
R(x)+|\nabla f|^2(x)+\lambda f(x)=C_1,~\forall~x\in M_{\mbox{reg}}\notag
\end{align}
where $C_1$ is a constant. Since $M_{\mbox{reg}}$ is dense in $M$, we have 
\begin{align}\label{Hamilton's identity}
R(x)+|\nabla f|^2(x)+\lambda f(x)=C_1,~\forall~x\in M.
\end{align}

By (\ref{Hamilton's identity}), following the argument of Lemma 2.28 in \cite{Chow}, one can show that 
\begin{align}
d(\phi_t(p),p)\le C|t|e^{C|t|}.\notag
\end{align}
Hence, $\phi_t(p)$ exists for all $t\in\mathbb{R}$

\end{proof}

Now, we  remove the lower bound assumption in Lemma \ref{lem-completeness when R has lower bound}.

To prove Theorem\ref{theo-R has lower bound}. We introduce the following lemma which is a modification of Proposition 2.2 in \cite{zhang}.

\begin{lem}\label{lem-bound of laplacian d}
Let $(M,g,f)$ be a gradient Ricci soliton on a connected Riemannian manifold. Fix $p\in M$, let $d(x)=\inf \{\mbox{Length of~} \gamma:\gamma\mbox{~is a smooth}$ 
$ \mbox{curve in~}M\mbox{~that connecting~}p\mbox{~and~}x\}$.Suppose $\mbox{Ric}\le (n-1)K$ on $B(p,r_0)$ for some constant $r_0$. Suppose $x_1\notin B(p,r_0)$ and there is a minimal geodesic in $M$ connecting $x_1$ and $p$. If $x_1$ is not in the cut locus of $p$, then  we have 
\begin{align}\label{estimate of delta d}
\Delta d+\langle\nabla f,\nabla d\rangle\le \frac{\lambda}{2}+(n-1)(\frac{2}{3}Kr_0+r_0^{-1})+|\nabla f|(p),~\forall~x\in B(x_1,\varepsilon),
\end{align}
where $\varepsilon$ is a positive constant.
\end{lem}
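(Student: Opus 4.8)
The plan is to follow the classical Laplacian comparison argument adapted to the Ricci soliton setting, as in Zhang's Proposition 2.2, keeping careful track of the contribution of $\langle\nabla f,\nabla d\rangle$. Since $x_1$ is not in the cut locus of $p$, the distance function $d$ is smooth in a neighborhood $B(x_1,\varepsilon)$, and there is a unique minimal geodesic $\gamma$ from $p$ to each $x$ in this neighborhood. First I would recall the soliton equation $\mathrm{Ric}+\frac{\lambda}{2}g=\mathrm{Hess}\,f$, so that taking the trace along $\nabla d$ gives $\mathrm{Ric}(\nabla d,\nabla d)+\frac{\lambda}{2}=\mathrm{Hess}\,f(\nabla d,\nabla d)=\frac{d}{dr}(\langle\nabla f,\nabla d\rangle)$ along the geodesic (using $|\nabla d|=1$ and $\nabla_{\nabla d}\nabla d=0$). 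This is the key identity that converts the unknown term $\langle\nabla f,\nabla d\rangle$ into something controllable by integrating along $\gamma$.

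Next I would set up the second variation / Bochner-type computation for $\Delta d$ along $\gamma$, parametrized by arclength $r\in[0,d(x_1)]$. Writing $\varphi(r)=\Delta d(\gamma(r))$, the Riccati inequality gives $\varphi'+\frac{\varphi^2}{n-1}\le -\mathrm{Ric}(\nabla d,\nabla d)$. Combining with the traced soliton identity yields $\varphi'+\frac{\varphi^2}{n-1}\le \frac{\lambda}{2}-\frac{d}{dr}\langle\nabla f,\nabla d\rangle$. So if I set $\psi(r)=\varphi(r)+\langle\nabla f,\nabla d\rangle(\gamma(r))$, which is exactly $\Delta d+\langle\nabla f,\nabla d\rangle$ evaluated along $\gamma$, I get a differential inequality for $\psi$ controlled by $\frac{\lambda}{2}$ plus a quadratic term. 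The strategy is then to split the integration: on the part $[0,r_0]$ near $p$ use the hypothesis $\mathrm{Ric}\le(n-1)K$ together with the standard comparison to control $\varphi(r_0)$ by roughly $(n-1)(\frac{2}{3}Kr_0+r_0^{-1})$ — this is the familiar estimate that mixing the lower (trivial, from $\varphi\to +\infty$ behavior near $p$ being the wrong sign, so one uses the upper Ricci bound) gives $\varphi(r_0)\le (n-1)(\frac23 Kr_0+r_0^{-1})$ after optimizing the model comparison — and on $[r_0,d(x_1)]$ drop the good quadratic term $\frac{\varphi^2}{n-1}\ge 0$ to get $\varphi(d(x_1))-\varphi(r_0)\le \frac{\lambda}{2}(d(x_1)-r_0)-[\langle\nabla f,\nabla d\rangle]_{r_0}^{d(x_1)}$; wait, more precisely I integrate the inequality for $\psi'$ so that the $\frac{d}{dr}\langle\nabla f,\nabla d\rangle$ telescopes.

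Carrying this out: integrating $\varphi'+\frac{d}{dr}\langle\nabla f,\nabla d\rangle\le\frac{\lambda}{2}$ from $r_0$ to $d(x_1)=:L$ (dropping $\frac{\varphi^2}{n-1}\ge0$), I obtain $\psi(L)\le\psi(r_0)+\frac{\lambda}{2}(L-r_0)$. Hmm, but the desired bound has no factor of $(L-r_0)$ multiplying $\lambda/2$, so I must instead \emph{not} drop the quadratic term but rather use it: the inequality $\varphi'\le\frac{\lambda}{2}-\frac{\varphi^2}{n-1}-\frac{d}{dr}\langle\nabla f,\nabla d\rangle$ should be compared against the constant solution, or one uses that once $\psi$ (or $\varphi$) exceeds the claimed bound the right side of its ODE becomes negative, giving an \emph{a priori} ceiling — this barrier/maximum-principle argument is what produces the clean bound $\frac{\lambda}{2}+(n-1)(\frac23 Kr_0+r_0^{-1})+|\nabla f|(p)$ independent of $L$, with the $|\nabla f|(p)$ term coming from $\langle\nabla f,\nabla d\rangle(p)\ge -|\nabla f|(p)$ at the basepoint after telescoping. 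Finally, by continuity the pointwise estimate at $x_1$ extends to all $x\in B(x_1,\varepsilon)$ for $\varepsilon$ small, completing the proof.

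The main obstacle I anticipate is the bookkeeping in the barrier argument on $[r_0,L]$: one must argue carefully that the quantity $\varphi(r)+\langle\nabla f,\nabla d\rangle(\gamma(r))$ cannot exceed the stated threshold, handling the sign of the quadratic term and the telescoping of the $\mathrm{Hess}\,f$ term correctly, and making sure the near-$p$ comparison on $[0,r_0]$ really yields the coefficient $\frac23 Kr_0$ rather than something like $Kr_0$ (this comes from integrating the model Riccati comparison with an upper curvature bound and optimizing, exactly as in \cite{zhang}). A secondary, purely expository point is that this lemma is stated on a \emph{manifold} $M$, so no orbifold subtleties enter here — the orbifold version is obtained later by applying this in local charts — so I would simply mirror the proof of Proposition 2.2 in \cite{zhang} with the extra care about the $|\nabla f|(p)$ boundary term.
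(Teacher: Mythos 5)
Your overall instinct (mirror Zhang's Proposition 2.2, use the soliton equation to trade $\mathrm{Ric}(\gamma',\gamma')$ for $\frac{d}{ds}\langle\nabla f,\gamma'\rangle$ and telescope it along the geodesic) is the right one, and your remark that no orbifold subtleties enter here matches the paper, whose only genuinely new point is that $d$ is still a smooth distance function near $x_1$ although $(M,g)$ may be incomplete; after that the paper simply runs Zhang's second--variation/index--form computation. But your execution has concrete gaps. First, on $[0,r_0]$ you claim the hypothesis $\mathrm{Ric}\le (n-1)K$ plus ``the standard comparison'' bounds $\varphi(r_0)=\Delta d(\gamma(r_0))$ by $(n-1)(\tfrac23Kr_0+r_0^{-1})$. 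This is the wrong direction: a pointwise upper bound on $\Delta d$ via the Riccati/Laplacian comparison requires a \emph{lower} Ricci bound. The coefficient $\tfrac23 Kr_0$ does not come from any pointwise comparison at $r_0$; it comes from the index-form argument with test fields $\phi(s)e_i(s)$, $\phi(s)=s/r_0$ on $[0,r_0]$ and $\phi\equiv1$ afterwards, where the Ricci upper bound on $B(p,r_0)$ is used only to estimate the error $\int_0^{r_0}(1-\phi^2)\mathrm{Ric}\,ds\le \tfrac23(n-1)Kr_0$, and $\int_0^{r_0}(n-1)(\phi')^2\,ds=(n-1)r_0^{-1}$. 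This is exactly what the paper means by ``index comparison theorem''. Second, because your telescoping of $\frac{d}{ds}\langle\nabla f,\gamma'\rangle$ only runs over $[r_0,L]$, the boundary term you produce is $\langle\nabla f,\nabla d\rangle(\gamma(r_0))$, which is not controlled by $|\nabla f|(p)$; to get the term at $p$ you must integrate the Hessian term from $s=0$, which again is what the cutoff index-form computation does automatically.

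Third, and most seriously, the barrier/maximum-principle step you invoke to remove the factor of $d(x)$ in front of $\lambda/2$ is not justified and cannot be repaired: your differential inequality reads $\psi'\le\frac{\lambda}{2}-\frac{\varphi^2}{n-1}$ with $\varphi=\psi-\langle\nabla f,\nabla d\rangle$, and since $\langle\nabla f,\nabla d\rangle$ is not a priori bounded along $\gamma$, largeness of $\psi$ does not make the right-hand side negative. Indeed no such $d$-independent bound can hold in general: for the Gaussian soliton on flat $\mathbb{R}^n$ with $f=\frac{\lambda}{4}|x|^2$ and $p$ the origin one has $\Delta d+\langle\nabla f,\nabla d\rangle=\frac{n-1}{d}+\frac{\lambda}{2}\,d$, which is unbounded when $\lambda>0$. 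What the computation (Zhang's, and the paper's) actually yields is the inequality with $\frac{\lambda}{2}\,d(x)$ in place of $\frac{\lambda}{2}$ (equivalently one keeps $\frac{\lambda}{2}\int_0^{d(x)}\phi^2\,ds$), and that is the form one should prove and use; your own first, ``telescoped'' computation on $[r_0,L]$ was already headed there before you discarded it. So: keep the integrated argument, run it from $s=0$ with the cutoff test fields rather than a Riccati comparison on $[0,r_0]$, and accept the $\frac{\lambda}{2}d(x)$ term; that recovers the paper's proof.
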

\begin{proof}
Compared with Proposition 2.2 in \cite{zhang}, $(M,g)$ may be incomplete in our case.  Note there is a minimal geodesic in $M$ connecting $x_1$ and $p$ and $x_1$ is not in the cut locus of $p$. So for $x\in B(x_1,\varepsilon)$, there exists a minimal geodesic connecting $x$ and $p$ for some $\varepsilon>0$. Then $d(x)$ can be regarded as the usual distance function on  $B(x_1,\varepsilon)$. Let $\gamma:[0,d(x)]\to M$ be a minimal geodesic from $p$ to $x$. Let 
$\{e_1,e_2,\cdots,e_{n-1},\gamma^{\prime}(0)\}$ be an orthonormal basis $\{e_1(s),e_2(s),\cdots,e_{n-1}(s),\gamma^{\prime}(s)\}$ along $\gamma$. Since $x$ is not in the cut locus of $p$, we can find Jacobian fields along $\gamma$ with $X_i(0)=0$ and $X_{i}(d(x))=e_i(d(x))$ for $i=1,2,\cdots,n-1$. Then, one can  follow the argument and the computation in Proposition 2.2 of \cite{zhang} to show (\ref{estimate of delta d}) by using index comparison theorem and the soliton equation.

\end{proof}

Now, we are ready to prove Theorem\ref{theo-R has lower bound}. The proof is similar to Theorem 1.3 in \cite{zhang}. In the proof of Theorem 1.3 in \cite{zhang}, the author constructed a function $u$ on $M$ and applying the maximum principle to $u$. In our case, the main difference occurs when $u$ attains its minimal at a singular point $x_1\in M_{\mbox{sing}}$.

\begin{proof}[Proof of Theorem\ref{theo-R has lower bound}.]

We first prove the case $\lambda\le 0$. Fix a point $p\in M_{\mbox{reg}}$. Note that there is a positive constant $r_0$ such that $\mbox{Ric}\le (n-1)r_0^{-2}$ on $B(p,r_0)$ and $|\nabla f|(p)\le (n-1)r_0^{-1}$. For any fixed constant $A>2$, we consider the function $u(x)=\psi(\frac{d(x,p)}{Ar_{0}})R(x)$, where $\psi(s)$ is a smooth nonnegative decreasing function such that $\psi(s)=1$ on $(-\infty,\frac{1}{2}]$ and $\psi(s)=0$ on $[1,\infty)$. We can also assume that there is a constant $C_1>0$ such that \begin{align}\label{bounds of psi}
    |\psi^{\prime}(s)|\le C_1,~[\psi^{\prime}(s)]^2\le C_1  \psi(s),~|\psi^{\prime\prime}(s)|\le C_1,~\forall~s\in (-\infty,+\infty).
\end{align}

If $\min_{x\in M}u\ge 0$, then $R(x)\ge 0$ on $B(p,\frac{Ar_0}{2})$.

If $\min_{x\in M}u< 0$, then there is a point $x_1\in B(p,Ar_0)$ such that $u(x_1)=\min_{x\in M}u< 0$. Since $\psi(s)\ge 0$, we get 
\begin{align}\label{theorem-lower bound-eq-5}
    R(x_1)<0.
\end{align}
If $x_1$ is a smooth point of $u$, then we also have
\begin{align}
    |\nabla u|(x_1)=0,~\Delta u(x_1)\ge0.\notag
\end{align}

\textbf{Case 1:} $x_1\in B(p,r_0)$. 

In this case, $\frac{d(x_1,p)}{Ar_0}<\frac{1}{2}$. There exist a small constant $\varepsilon>0$ such that $\frac{d(x,p)}{Ar_0}<\frac{1}{2},$ for all $x\in B(x_1,\varepsilon)$. Hence, $u(x)=R(x)$ on $B(x_1,\varepsilon)$. $R(x)$ attains its minimum at $x_1$ on $B(x_1,\varepsilon)$.

Suppose $x_1\in M_{\mbox{sing}}$. Then,  we can find    a local model $(\hat{U},\Gamma)$ around $x_1\in U$ along with $\hat{x_1}\in \hat{U}$ project to $x_1$.  Then, there is a smooth function $\hat{f}$ defined on $\hat{U}$, such that 
\begin{align}\label{local ricci soliton equation}
\hat{\rm Ric}+\frac{\lambda}{2}\hat{g}={\rm Hess}_{\hat{g}} \hat{f}~\mbox{on}~ \hat{U}.
\end{align}
Let $\pi:\hat{U}\to U$ be the projection from $\hat{U}$ to $U$ with $\pi(\hat{x_1})=x_1$. We may assume that $U\subset B(x_1,\varepsilon)$. Then, 
    \begin{align}
\hat{R}(x)=R(\pi(x)),~\forall~x\in U\subset B(x_1,\varepsilon).\notag
    \end{align}
    Hence, $\hat{R}(x)$ attains its minimum at $\hat{x}_1$ on $\hat{U}$ with 
    \begin{align}\label{theorem-lower bound-eq-4}
\hat{R}(\hat{x}_1)=R(x_1)=u(x_1)<0.
    \end{align}
    Then, 
\begin{align}\label{theorem-lower bound-eq-1}
    |\hat{\nabla} \hat{R}|(x_1)=0,~\hat{\Delta}\hat{R}(x_1)\ge0.
\end{align}
By (\ref{local ricci soliton equation}), we get 
\begin{align}\label{theorem-lower bound-eq-2}
\hat{\Delta}\hat{R}+\langle \hat{\nabla} \hat{f},\hat{\nabla}\hat{R}\rangle+\lambda \hat{R}+|\hat{\mbox{Ric}}|^2=0~on~\hat{U}.
\end{align}
Note that 
\begin{align}\label{theorem-lower bound-eq-3}
|\hat{\mbox{Ric}}|^2(x)\ge \frac{2}{n}\hat{R}^2(x),~\forall~x\in~\hat{U}.
\end{align}
Combining  (\ref{theorem-lower bound-eq-4}), (\ref{theorem-lower bound-eq-1}), (\ref{theorem-lower bound-eq-2}) and (\ref{theorem-lower bound-eq-3}), we get
\begin{align}
\hat{R}(\hat{x_1})\ge -\frac{n\lambda}{2}.\notag
\end{align}
If $R(x)\le 0$ and $x\in B(p,\frac{Ar_0}{2})$, then
\begin{align}
R(x)\ge \frac{u(x)}{\psi(\frac{1}{2})}\ge \frac{u(x_1)}{\psi(\frac{1}{2})}=\frac{\hat{R}(\hat{x}_1)}{\psi(\frac{1}{2})}\ge -\frac{n\lambda}{2\psi(\frac{1}{2})}.\notag
\end{align}
Hence,
\begin{align}\label{lower bound estimate of R-1-1}
R(x)\ge \min\{0,-\frac{n\lambda}{2\psi(\frac{1}{2})}\},~\forall~x\in B(p,\frac{Ar_0}{2}).
\end{align}

Now, we suppose $x_1\in M_{\mbox{reg}}$. Since $R(x)$ satisfies the soliton equation  on $B(x_1,\varepsilon)$, we get
\begin{align}
\Delta R+\langle \nabla f,\nabla R\rangle+\lambda R+|\mbox{Ric}|^2=0~on~M_{\mbox{reg}}.\notag
\end{align}
By a similar argument, we can also show that 
\begin{align}
R(x_1)\ge -\frac{n\lambda}{2}.\notag
\end{align}
Therefore,
\begin{align}\label{lower bound estimate of R-1-2}
R(x)\ge \min\{0,-\frac{n\lambda}{2\psi(\frac{1}{2})}\},~\forall~x\in B(p,\frac{Ar_0}{2}).
\end{align}

\textbf{Case 2:} $x_1\notin B(p,r_0)$.

(1) Suppose $x_1\in M_{sing}$.    We can find    a local model $(\hat{U},\Gamma)$ around $x_1\in U$ along with $\hat{x_1}\in \hat{U}$ project to $x_1$.  Then, there is a smooth function $\hat{f}$ defined on $\hat{U}$, such that 
\begin{align}\label{local ricci soliton equation-2}
\hat{\rm Ric}+\frac{\lambda}{2}\hat{g}={\rm Hess}_{\hat{g}} \hat{f}~\mbox{on}~ \hat{U}.
\end{align}
Let $\pi:\hat{U}\to U$ be the projection from $\hat{U}$ to $U$ with $\pi(\hat{x_1})=x_1$. Then, 
    \begin{align}
\hat{R}(x)=R(\pi(x)),~\forall~x\in U.\notag
    \end{align}

    Now, we construct a new Ricci soliton $(N,\bar{g},\bar{f})$ on smooth manifold which may be incomplete and introduce a new function $\bar{u}$ on $N$. By Theorem \ref{theo-segment}, $M_{\mbox{reg}}$ is geodesically convex. Then, we can find a minimal geodesic $\gamma(s):[0, d(x_1,p)]\to M$ from $p$ to $x_1$ and $\gamma(s)$ is a smooth point for all $s\in[0,d(x_1,p))$. Choose $s_1\in (0,d(x_1,p))$ such that $\gamma(s_1)\in U$. Let $q=\gamma(s_1)$ and $\delta=d(x_1,q)$. Let $L=\{\gamma(s)\in M:s\in[0, d(x_1,p)]\}$. Let $\varepsilon$ be a positive constant such that 
 $\varepsilon<<\delta$. Let $B(L,\varepsilon)=\{x\in M:d(x,L)\le \varepsilon\}$. Suppose $\hat{q}\in \hat{U}$ and $\pi(\hat{q})=q$. Since $q$ is a smooth point, $\pi|_{\hat{B}(\hat{q},\varepsilon)}:\hat{B}(\hat{q},\varepsilon)\to B(q,\varepsilon)$ is an isomorphism when $\varepsilon$ is small enough. Let $D=B(L,\varepsilon)\cap\partial B(x_1,\delta) $. Note that $q$ is a smooth point. When $\delta$ is small and $\varepsilon<<\delta$, we can get that $D$ is diffeomorphic to an $(n-1)$-dimensional  disk with a smooth boundary. Let $\hat{D}=(\pi|_{\hat{B}(\hat{q},\varepsilon)})^{-1}(D)$. Then, $\pi|_{\hat{D}}:(\hat{D},\hat{g})\to (D,g)$ is also an isomorphism, where $(\hat{D},\hat{g})$ and $(D,g)$ carry the induced metrics from $(\hat{U},\hat{g})$ and $(M,g)$ respectively.  Let $\bar{N}$ be the space by gluing $B(L,\varepsilon)\setminus B(x_1,\delta)$ and $\hat{B}(\hat{x_1},\delta)$ via the isomorphism $\pi|_{\hat{D}}:(\hat{D},\hat{g})\to (D,g)$. That is to say,  $\bar{N}=(B(L,\varepsilon)\setminus B(x_1,\delta))\sqcup_{\pi|_{\hat{D}}}\hat{B}(\hat{x_1},\delta)$. Then, the interior $N$ of $\bar{N}$ is a manifold. See Figure \ref{fig:enter-label} for the construction of $N$. The red region in the picture is $D$ or $\hat{D}$.

 \begin{figure}
     \includegraphics[width=8cm]{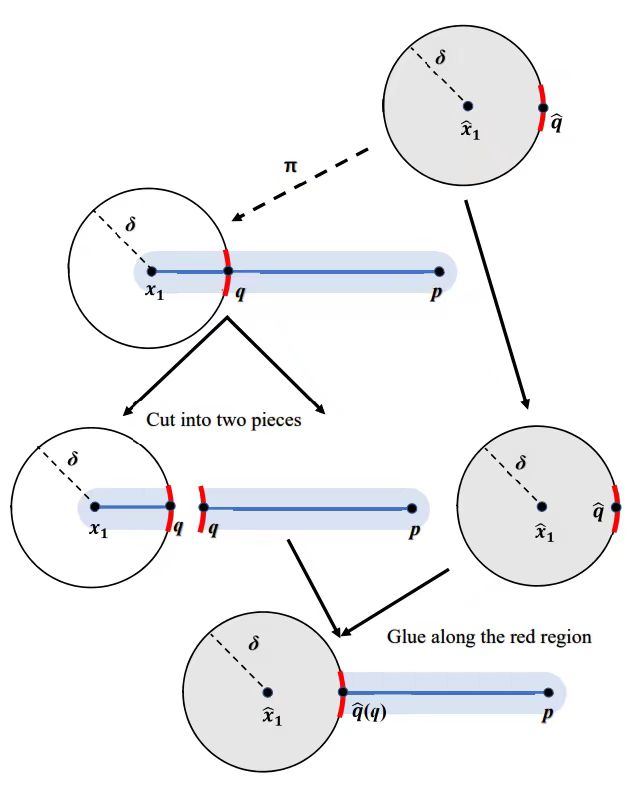}
     \caption{surgury on the soliton}
     \label{fig:enter-label}
 \end{figure}

 Let $\bar{g}(x)=\hat{g}(x)$ for $x\in N\cap \hat{B}(\hat{x_1},\delta)$ and $\bar{g}(x)=g(x)$ for $x\in N\cap(B(L,\varepsilon)\setminus B(x_1,\delta))$. Then, $(N,\bar{g})$ is a Riemannian manifold. Let $\bar{f}(x)=\hat{f}(x)$ for $x\in N\cap \hat{B}(\hat{x_1},\delta)$ and $\bar{f}(x)=f(x)$ for $x\in N\cap(B(L,\varepsilon)\setminus B(x_1,\delta))\subset M_{\mbox{reg}}$. Then, $\bar{f}$ is smooth on $N$. By (\ref{local ricci soliton equation-2}) and soliton equation on regular set,  $\bar{f}$ satisfies
\begin{align}\label{soliton equation on N}
\bar{\rm Ric}+\frac{\lambda}{2}\bar{g}={\rm Hess}_{\bar{g}} \bar{f}~\mbox{on}~ N.
\end{align}
Let $\bar{d}$ be the distance function on $(N,\bar{g})$. Note that $\gamma|_{[s_1,d(x_1,p)]}$ is a minimal geodesic from $q$ to $x_1$ on $M$ and $\gamma(s)\in M_{\mbox{reg}}$ for $s\in [s_1,d(x_1,p))$.  When $\delta $ is small, there is a geodesic $\hat{\gamma}(s):[s_1,d(x_1,p)]\to \hat{B}(\hat{x_1},\delta) $ from $\hat{q}$ to $\hat{x_1}$ such that 
\begin{align}
\pi(\hat{\gamma}(s))=\gamma(s),~\forall~s\in[s_1,d(x_1,p)].
\end{align}
Now, we let $\bar{\gamma}(s)=\gamma(s)$ for $s\in[0,s_1]$ and $\bar{\gamma}(s)=\hat{\gamma}(s)$ $s\in [s_1,d(x_1,p)]$. Then, $\bar{\gamma}(s)$ is a geodesic on $N$. Since $\gamma$ is minimal on $M$, we get $\bar{\gamma}$ is a minimal geodesic on $N$. Then, 
\begin{align}\label{equal distance at x_1}
    \bar{d}(\hat{x_1},p)=d(x_1,p).
\end{align}

\textbf{i)} Suppose $\hat{x}_1$ is not in the cut locus of $p$ on $(N,\bar{g})$. Then, $\bar{d}(x,p)$ is smooth on $\hat{B}(\hat{x_1},\delta^{\prime})$ for some positive constant $\delta^{\prime}$. Note that 
\begin{align}\label{distance comparison}
\bar{d}(x,p)\ge d(\pi(x),p),~\forall~x\in \hat{B}(\hat{x_1},\delta).
\end{align}
Recall that $\hat{R}(x_1)=R(x_1)<0$.  We may assume $\delta=\delta^{\prime}$. Then,
\begin{align}\label{scalar curvature negative}
    \hat{R}(x)<0,~\forall~x\in \hat{B}(\hat{x_1},\delta).
\end{align}
Now, let $\bar{u}(x)=\psi(\frac{\bar{d}(x,p)}{Ar_0})\bar{R}(x)$ for $x\in \hat{B}(\hat{x_1},\delta)$. Since $\psi$ is nonnegative and nonincreasing, by (\ref{distance comparison}) and (\ref{scalar curvature negative}), we have
\begin{align}
    \bar{u}(x)\ge u(\pi(x)),~\forall~x\in \hat{B}(\hat{x_1},\delta).
\end{align}
By (\ref{equal distance at x_1}), we also have 
    \begin{align}
    \bar{u}(x_1)=\psi(\frac{\bar{d}(\hat{x}_1,p)}{Ar_0})\bar{R}(\hat{x_1})=\psi(\frac{d(x_1,p)}{Ar_0})R(x_1)=u(x_1)<0.
\end{align}
Note that $u$ attains its minimum at $x_1$. Hence, $\bar{u}(x)$ attains its minimum at $\hat{x}$. Note that $\bar{u}(x)$ is smooth on $\hat{B}(\hat{x_1},\delta^{\prime})$. Then,   we  have
\begin{align}\label{bar u at minimum}
    |\bar{\nabla} \bar{u}|(\hat{x}_1)=0,~\bar{\Delta} \bar{u}(\hat{x}_1)\ge0.
\end{align}
Therefore,
\begin{align}
    \bar{\nabla}[ \psi(\frac{\bar{d}(x,p)}{Ar_0})\bar{R}(x)]|_{x=\hat{x}_1}=0.\notag
\end{align}
Hence,
\begin{align}\label{computation of laplacian u-1}
\bar{\nabla} \bar{R}(\hat{x}_1)=-\frac{\psi^{\prime}}{\psi^2}\cdot \frac{\bar{\nabla}\bar{d}(\hat{x}_1,p)}{Ar_0}\cdot \bar{u}(\hat{x}_1).
\end{align}
By (\ref{soliton equation on N}), we also have
\begin{align}\label{computation of laplacian u-2}
\bar{\Delta} \bar{R}+\langle \bar{\nabla} \bar{f},\bar{\nabla} \bar{R}\rangle+\lambda \bar{R}+|\bar{\mbox{Ric}}|^2=0~on~N.
\end{align}
By (\ref{computation of laplacian u-1}) and (\ref{computation of laplacian u-2}), we get 
\begin{align}\label{computation of laplacian u-3}
    \bar{\Delta}\bar{u}(\hat{x}_1)=&\big(\frac{\psi^{\prime\prime}}{\psi}\cdot\frac{1}{(Ar_0)^2}-\frac{(\psi^{\prime})^2}{\psi^2}\cdot\frac{2}{(Ar_0)^2}-\lambda\big)\bar{u}(\hat{x}_1)-\psi|\bar{\mbox{Ric}}|^2\\
    &+\frac{\psi^{\prime}}{\psi}\cdot\frac{\bar{u}(\hat{x_1})}{Ar_0}\big(\bar{\Delta}\bar{d}+\langle \bar{\nabla}\bar{f},\bar{\nabla}\bar{d}\rangle\big)\notag\\
    \le&\big(\frac{\psi^{\prime\prime}}{\psi}\cdot\frac{1}{(Ar_0)^2}-\frac{(\psi^{\prime})^2}{\psi^2}\cdot\frac{2}{(Ar_0)^2}-\lambda\big)\bar{u}(\hat{x}_1)-\frac{2}{n}\psi\bar{R}^2\notag\\
    &+\frac{\psi^{\prime}}{\psi}\cdot\frac{\bar{u}(\hat{x_1})}{Ar_0}\big(\bar{\Delta}\bar{d}+\langle \bar{\nabla}\bar{f},\bar{\nabla}\bar{d}\rangle\big)\notag
\end{align}
Recall that 
\begin{align}
\bar{\mbox{Ric} }(x)&\le \frac{n-1}{r_0^2},~\forall~x\in \bar{B}(p,r_0).\label{computation of laplacian u-4}\\
|\bar{\nabla}\bar{f}|(p)&\le \frac{n-1}{r_0}.\label{computation of laplacian u-5}
\end{align}
Note that $\bar{u}(\hat{x_1})<0$ and $\psi^{\prime}\le0$. Combining (\ref{computation of laplacian u-3}), (\ref{computation of laplacian u-4}), (\ref{computation of laplacian u-5}) with Lemma\ref{lem-bound of laplacian d}, we get

\begin{align}\label{inequality of laplacian u }
\bar{\Delta}\bar{u}(\hat{x}_1)\le\frac{|\bar{u}(\hat{x}_1)|}{\psi}\Big( \frac{(\psi^{\prime})^2}{\psi}\frac{2}{(Ar_0)^2}-\frac{\psi^{\prime}}{Ar_0}\big(\frac{\lambda}{2}+\frac{8(n-1)}{3r_0}\big)+\frac{|\psi^{\prime\prime}|}{Ar_0^2}+\lambda-\frac{2}{n}|\bar{u}(\hat{x}_1)|\Big).
\end{align}

 By (\ref{bounds of psi}),(\ref{bar u at minimum}) and (\ref{inequality of laplacian u }), we get 
\begin{align}
    \bar{u}(\hat{x}_1)\ge -\frac{C+\lambda}{Ar^2_0}-\frac{n\lambda}{2}.
\end{align}
If $R(x)\le 0$ and $x\in B(p,\frac{Ar_0}{2})$, then
\begin{align}
R(x)\ge \frac{u(x)}{\psi(\frac{1}{2})}\ge \frac{\bar{u}(\hat{x}_1)}{\psi(\frac{1}{2})}\ge -\frac{C+\lambda}{Ar_0^2\psi(\frac{1}{2})}-\frac{n\lambda}{2\psi(\frac{1}{2})}.\notag
\end{align}
Hence,
\begin{align}\label{lower bound estimate of R-1}
    R(x)\ge \min\{0,-\frac{C+\lambda}{Ar_0^2\psi(\frac{1}{2})}-\frac{n\lambda}{2\psi(\frac{1}{2})}\},~\forall~x\in B(p,\frac{Ar_0}{2}).
\end{align}

\textbf{ii)} Suppose $\hat{x}_1$ is in the cut locus of $p$ on $(N,\bar{g})$. Let $\bar{u}(x)=\psi(\frac{\bar{d}(x,p)}{Ar_0})\bar{R}(x)$ for $x\in \hat{B}(\hat{x_1},\delta)$. By the same argument as in case \textbf{i)}, it remains true that 
  \begin{align}
    \bar{u}(x_1)=\psi(\frac{\bar{d}(\hat{x}_1,p)}{Ar_0})\bar{R}(\hat{x_1})=\psi(\frac{d(x_1,p)}{Ar_0})R(x_1)=u(x_1)<0.
\end{align}
However, $\bar{u}(x)$ may not be smooth around $\hat{x}_1$. We now contruct a new function on $\hat{B}(\hat{x_1},\delta)$ which is smooth around $\hat{x}_1$ and attains its minimum at $\hat{x}_1$.

Since $\hat{x}_1$ is in the cut locus of $p$ on $(N,\bar{g})$ and $\bar{\gamma}(s)$ is minimal geodesic on $N$ from $p$ to $x_1$. For any $\epsilon>0$, $\bar{\gamma}|_{[\epsilon,d(p,x_1)]}$ is a minimal geodesic from $\bar{\gamma}(\epsilon)$ to $\hat{x}_1$ and $\hat{x}_1$  is not in the cut locus of $\bar{\gamma}(\epsilon)$. Now we fix $\epsilon$ such that $\epsilon\le \varepsilon$. Let $p^{\prime}=\bar{\gamma}(\epsilon)$ and $v(x)=\psi(\frac{\bar{d}(x,p^{\prime})+\epsilon}{Ar_0})\bar{R}(x)$ for $x\in \hat{B}(\hat{x_1},\delta)$. Note that 
\begin{align}
    \bar{d}(x,p^{\prime})+\epsilon= \bar{d}(x,p^{\prime})+\bar{d}(p,p^{\prime})\ge\bar{d}(x,p)\\
    \bar{d}(\hat{x}_1,p^{\prime})+\epsilon= \bar{d}(\hat{x}_1,p^{\prime})+\bar{d}(p,p^{\prime})=\bar{d}(\hat{x}_1,p)
\end{align}
When $\delta$ is small, by (\ref{scalar curvature negative}), we can assume that 
\begin{align}
    \bar{R}(x)<0,~\forall~x\in \bar{B}(\hat{x_1},\delta).
\end{align}
Therefore, 
\begin{align}
v(x)=\psi(\frac{\bar{d}(x,p^{\prime})+\epsilon}{Ar_0})\bar{R}(x)\ge \psi(\frac{\bar{d}(x,p)}{Ar_0})\bar{R}(x)=\bar{u}(x),~ \forall ~x\in \hat{B}(\hat{x_1},\delta). \\
  v(\hat{x}_1)=\psi(\frac{\bar{d}(\hat{x}_1,p^{\prime})+\epsilon}{Ar_0})\bar{R}(\hat{x}_1)=\psi(\frac{\bar{d}(\hat{x}_1,p)}{Ar_0})\bar{R}(\hat{x_1})=\bar{u}(\hat{x}_1)<0.
\end{align}
Hence, $v(x)$ attains its minimum at $\hat{x}_1$ and $v(x)$ is smooth around $\hat{x}_1$.
 By (\ref{computation of laplacian u-4}) and (\ref{computation of laplacian u-5}), when $\epsilon$ is small, we have 
\begin{align}
\bar{\mbox{Ric} }(x)&\le \frac{n-1}{r_0^2},~\forall~x\in \bar{B}(p^{\prime},r_0-\epsilon).\label{computation of laplacian v-1}\\
|\bar{\nabla}\bar{f}|(p^{\prime})&\le \frac{2(n-1)}{r_0}.\label{computation of laplacian v-2}
\end{align}
By (\ref{computation of laplacian v-1}) and (\ref{computation of laplacian v-2}), we can follow the argument in case \textbf{i)} to compute $\bar{\Delta} v$ and show that 
\begin{align}\label{lower bound estimate of R-2}
    R(x)\ge\min\{0,-\frac{C^{\prime}+\lambda}{A(r_0-\epsilon)^2\psi(\frac{1}{2})}-\frac{n\lambda}{2\psi(\frac{1}{2})}\},~\forall~x\in B(p,\frac{Ar_0}{2}).
\end{align}

Finally, by (\ref{lower bound estimate of R-1-1}),(\ref{lower bound estimate of R-1-2}), (\ref{lower bound estimate of R-1}) and (\ref{lower bound estimate of R-2}), we have 
\begin{align}
    R(x)\ge\min\{0,-\frac{C+C^{\prime}+2|\lambda|}{A(r_0-\epsilon)^2\psi(\frac{1}{2})}-\frac{n\lambda}{2\psi(\frac{1}{2})},-\frac{n\lambda}{2\psi(\frac{1}{2})}\},~\forall~x\in B(p,\frac{Ar_0}{2}) .\notag
\end{align}
By taking $A\to\infty$, 
\begin{align}
    R(x)\ge\min\{0,-\frac{n\lambda}{2\psi(\frac{1}{2})}\},~\forall~x\in M .\notag
\end{align}
For $\lambda\le0$, $R(x)\ge 0$ on $M$. For $\lambda>0$, 
\begin{align}
    R(x)\ge -\frac{n\lambda}{2\psi(\frac{1}{2})},~\forall~x\in M .\notag
\end{align}
We complete the proof.
\end{proof}

In the end of this section, we prove Theorem \ref{theo-automorphism}.

  \begin{proof}[Proof of Theorem \ref{theo-automorphism}]
  By Theorem \ref{theo-R has lower bound}
 and Lemma \ref{lem-completeness when R has lower bound}, $\phi_t(p)$ exists for all $p\in M$ and $t\in \mathbb{R}$.      We first show that $\phi_t$ is a smooth map from $\mathcal{M}$ to $\mathcal{M}$ for any $t\in\mathbb{R}$. We may suppose $t>0$. By Definition \ref{def-phi}, for any $p\in M$, suppose $\phi_t(p)=\phi_{t_m}(\phi_{t_{m-1}}(\cdots\phi_{t_1}(p)\cdots))$ and the following hold:
  
  (1) $t=t_1+t_2+\cdots+t_m$ and $t\cdot t_{i}\ge 0$ for all $1\le i\le m$;

  (2) For $1\le i\le m$, there exists a chart $(U_i,\hat{U}_i,\pi_i,\Gamma_i)$ such that  $p_1=p$, $p_{i+1}=\phi_{t_{i}}(p_i)$  and $\phi_{s}(p_i)\in U_i$  when $s\in[0,t_i]$, where $\phi_{s}(p_i)\in U_i$ is defined in Definition \ref{def-phi-local} .

  In each local chart $(U_i,\hat{U}_i,\pi_i,\Gamma_i)$, actually, $\phi_t=(\hat{\phi}_t,Id_{\Gamma_i})$, where $\hat{\phi}_t$ is generated by $\hat{\nabla}\hat{f}$ and $\hat{f}=f\circ \pi_{i}$.
By Lemma \ref{lem-commutative lemma} and Lemma \ref{lem-commutative lemma-singular}, we have 
\begin{align}
\hat{\phi}_{t}(\hat{p})\cdot h=\hat{\phi}_{t}(\hat{p}\cdot h),~\forall~t\in (a,b),~h\in\Gamma_i.
\end{align}
Obviously, $\hat{\phi}_t$ is smooth. Hence, $\phi_t$ is smooth in each chart $(U_i,\hat{U}_i,\pi_i,\Gamma_i)$. Therefore, $\phi_t(p)=\phi_{t_m}(\phi_{t_{m-1}}(\cdots\phi_{t_1}(p)\cdots))$ is a smooth orbifold map from $\mathcal{M}$ to $\mathcal{M}$. 

Suppose $\phi_t(p)=x$. Note that $\phi_{-t}(x)=\phi_{-t_1}(\phi_{-t_{2}}(\cdots\phi_{-t_m}(x)\cdots))$ and $\phi_{t_i},\phi_{-t_i}$ are both defined in chart   $(U_i,\hat{U}_i,\pi_i,\Gamma_i)$. It is easy to see that $\phi_t(\phi_{-t})=Id$ and $\phi_{-t}(\phi_{t})=Id$. Hence, $\phi_t$ is an orbifold automorphism of $\mathcal{M}$.
      
  \end{proof}

\section{Rigidity of positively curved steady GRS on orbifolds}\label{section-rigidity of soliton}

The existence of  equilibrium points (i.e. the points where $|\nabla f|=0$ holds) may affect the topology of the soliton. We note that gradient Ricci solitons with compact singularity always admit an equilibrium point.
\begin{lem}\label{lem-existence of stable point}
Suppose $(\mathcal{M},g,f)$  is a gradient Ricci soliton on an $n$-dimens-ional Riemannian orbifold. Let $\Sigma\subset M_{\mbox{sing}}$ be a compact connected component of $M_{\mbox{sing}}$. If $\Sigma\subset M_{\mbox{sing}}$ is non-empty, then there exists one point $p\in \Sigma$ such that $|\nabla f|(p)=0$.
\end{lem}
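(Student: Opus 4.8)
The plan is to take the point $p\in\Sigma$ where $f$ attains its maximum on $\Sigma$ and argue by contradiction that $\nabla f$ cannot be nonzero there, using only that the gradient flow preserves the singular locus (Lemma \ref{lem-singular point stay in singular set}) together with a connectedness argument; no appeal to completeness or to Theorem \ref{theo-automorphism} is needed.

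Since $f$ is continuous on $M$ and $\Sigma$ is compact, $f|_{\Sigma}$ attains its maximum at some $p\in\Sigma$. Suppose $|\nabla f|(p)\neq 0$. Fix a local model $(\hat U,\Gamma)$ around $p$ with $\hat p\in\hat U$ projecting to $p$ and $\hat f=f\circ\pi$, so that the hypothesis reads $|\hat\nabla\hat f|(\hat p)\neq 0$. Let $\hat\phi_t$ be the flow of the smooth vector field $\hat\nabla\hat f$, which exists for $t$ in some interval $(-\varepsilon,\varepsilon)$, and set $\phi_t(p)=\pi(\hat\phi_t(\hat p))$ as in Definition \ref{def-phi-local}; it is continuous in $t$ with $\phi_0(p)=p$.

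Now I claim $\phi_t(p)\in\Sigma$ for all small $t$. Because $p\in M_{\mbox{sing}}$ and $|\hat\nabla\hat f|(\hat p)\neq 0$, Lemma \ref{lem-singular point stay in singular set} gives (after shrinking $\varepsilon$) that $\phi_t(p)\in M_{\mbox{sing}}$ for all $t\in(-\varepsilon,\varepsilon)$. Hence $\{\phi_t(p):t\in(-\varepsilon,\varepsilon)\}$ is a connected subset of $M_{\mbox{sing}}$ meeting $\Sigma$, so it is contained in the connected component $\Sigma$. Next, working in the chart we have $f(\phi_t(p))=\hat f(\hat\phi_t(\hat p))$, which is smooth in $t$ with
\begin{align}
\frac{d}{dt}f(\phi_t(p))=\langle\hat\nabla\hat f,\hat\nabla\hat f\rangle(\hat\phi_t(\hat p))=|\hat\nabla\hat f|^2(\hat\phi_t(\hat p))\ge 0,
\end{align}
and this is strictly positive at $t=0$ by assumption, hence on some $(0,\varepsilon')$. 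So $f(\phi_t(p))>f(p)$ for $t\in(0,\varepsilon')$ while $\phi_t(p)\in\Sigma$, contradicting the maximality of $f$ on $\Sigma$. Therefore $|\nabla f|(p)=0$.

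This is essentially a monotonicity/maximum-principle argument, so there is no real obstacle; the two places needing care are that the integral curve through the singular point $p$ is defined for short time and meaningful, which is handled by passing to the local model and invoking Definition \ref{def-phi-local} and Proposition \ref{prop-independent of charts}, and that it stays inside the \emph{same} component $\Sigma$ rather than merely in $M_{\mbox{sing}}$ --- this is exactly where the hypotheses that $\Sigma$ is a connected component (so it is invariant under the short-time flow) and compact (so $f|_\Sigma$ has a maximum) enter.
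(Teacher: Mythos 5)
Your proposal is correct and takes essentially the same route as the paper: extremize $f$ on the compact set $\Sigma$, use Lemma \ref{lem-singular point stay in singular set} to keep the gradient flow of $f$ inside $\Sigma$, and exploit $\frac{d}{dt}f(\phi_t(p))=|\nabla f|^2(\phi_t(p))\ge 0$ to force $|\nabla f|(p)=0$. The differences are cosmetic — you take the maximum and argue by contradiction using only the short-time flow, whereas the paper takes the minimum and flows backward for all $t\le 0$ — and if anything your version is slightly more careful, since it needs neither long-time existence of $\phi_t$ nor completeness and makes the connectedness step (staying in the component $\Sigma$ rather than merely in $M_{\mbox{sing}}$) explicit.
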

\begin{proof}
Since $f$ is continuous on $M$, it is also continous on the compact subset $\Sigma$. Then, there exists a point $p\in M$ such that $f(p)=\min_{x\in \Sigma}f(x)$.  By Lemma\ref{lem-singular point stay in singular set}, $\phi_t(p)\in \Sigma$, $\forall~t\in(-\infty,+\infty)$. Note that 
\begin{align}\label{differential of f}
    \frac{d f(\phi_t(p))}{d t}=|\nabla f|^2(\phi_t(p))\ge 0.
\end{align}
Hence,
\begin{align}
    f(\phi_t(p))\le f(p)=\min_{x\in \Sigma}f(x),~\forall~t\in (-\infty,0].\notag
\end{align}
It follows that 
\begin{align}
    f(\phi_t(p))=f(p),~\forall~t\in (-\infty,0].\notag
\end{align}
Therefore, $|\nabla f|(p)=0$ by (\ref{differential of f}).

\end{proof}

The following lemma shows that the equilibrium point is unique on steady Ricci solitons with positive Ricci curvature.

\begin{lem}\label{lem-uniqueness of stable point}
Suppose $(\mathcal{M},g,f)$  is a steady gradient Ricci soliton on an $n$-dimensional orbifold with positive Ricci curvature.  Then, $(\mathcal{M},g,f)$ admits at most one point $p$ such that $|\nabla f|(p)=0$.  
\end{lem}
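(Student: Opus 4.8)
The plan is to run the classical argument that on a steady gradient Ricci soliton the potential $f$ is strictly convex along geodesics, so that it has at most one critical point; the orbifold work consists only in making this legitimate along a segment that may touch or lie inside the singular locus. Suppose, toward a contradiction, that $p\neq q$ are two points with $|\nabla f|(p)=|\nabla f|(q)=0$. First I would join $p$ and $q$ by a minimal geodesic $\gamma:[0,L]\to M$, $L=d(p,q)>0$ (this uses completeness of $(\mathcal{M},g)$, the standing assumption of the paper, together with the structure of segments in Theorem \ref{theo-segment}), and set $h(s)=f(\gamma(s))$. The goal is to show that $h$ is smooth on $[0,L]$ with $h''>0$ on $(0,L)$.

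The convexity I would establish chart by chart. Fix $s_0\in[0,L]$, pick an orbifold chart $(U_{\alpha},\hat{U}_{\alpha},\pi_{\alpha},\Gamma_{\alpha})$ around $\gamma(s_0)$, and lift $\gamma$ near $s_0$ to a curve $\hat{\gamma}$ in $\hat{U}_{\alpha}$. By Theorem \ref{theo-segment}, near an interior parameter $\gamma$ either lies in $M_{\mbox{reg}}$, where $\pi_{\alpha}$ is a local isometry and so $\hat{\gamma}$ is automatically a geodesic of $(\hat{U}_{\alpha},\hat{g}_{\alpha})$, or it lies in $M_{\mbox{sing}}$, where $\hat{\gamma}$ is contained in a stratum of $\hat{U}_{\alpha}$; since the strata are totally geodesic submanifolds (Section \ref{section-basics}) and segments restrict to length-minimizing curves in each stratum, $\hat{\gamma}$ is again a geodesic of $(\hat{U}_{\alpha},\hat{g}_{\alpha})$. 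In the chart $h(s)=\hat{f}_{\alpha}(\hat{\gamma}(s))$ with $\hat{f}_{\alpha}=f\circ\pi_{\alpha}$ smooth and $\hat{\rm Ric}_{\hat{g}_{\alpha}}={\rm Hess}_{\hat{g}_{\alpha}}\hat{f}_{\alpha}$ (Definition \ref{def-solitons on orbifolds} with $\lambda=0$), so $h$ is smooth near $s_0$ and
\[
h''(s)={\rm Hess}_{\hat{g}_{\alpha}}\hat{f}_{\alpha}(\hat{\gamma}'(s),\hat{\gamma}'(s))=\hat{\rm Ric}_{\hat{g}_{\alpha}}(\hat{\gamma}'(s),\hat{\gamma}'(s))>0
\]
by the positivity of the Ricci curvature. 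Taking also $s_0=0$ and $s_0=L$ shows $h$ is smooth on the whole closed interval; and since $|\nabla f|(p)=|\hat{\nabla}\hat{f}_{\alpha}|(\hat{p})=0$ forces $\hat{\nabla}\hat{f}_{\alpha}(\hat{p})=0$, we get $h'(0)=\langle\hat{\nabla}\hat{f}_{\alpha}(\hat{p}),\hat{\gamma}'(0)\rangle=0$, and symmetrically $h'(L)=0$.

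With $h$ as above, $h'$ is strictly increasing on $[0,L]$, hence $h'(0)<h'(L)$; but both vanish, a contradiction. Therefore $(\mathcal{M},g,f)$ admits at most one point with $|\nabla f|=0$.

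The step I expect to be the main obstacle is the lifting in the second paragraph: verifying that a minimizing geodesic of the orbifold, wherever it meets the singular locus --- in particular at a singular endpoint $p$ or $q$ --- lifts to a genuine smooth geodesic of the local model, so that both the Hessian computation and the endpoint identity $h'(0)=0$ are valid. This is exactly where the structure theory of Riemannian orbifolds recalled in Section \ref{section-basics} (total geodesy of strata, the description of segments) is needed; granting it, the rest is the elementary observation that a $C^2$ function on an interval with positive second derivative cannot be critical at both endpoints.
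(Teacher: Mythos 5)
Your proof is correct in substance, but it follows a genuinely different route from the paper. The paper exploits the machinery of Section \ref{section-existence of Ricci flow}: since $p$ and $q$ are zeros of $\nabla f$, they are fixed by the flow $\phi_t$, which by Theorem \ref{theo-automorphism} and Corollary \ref{cor-flow} is an isometry from $(\mathcal{M},\phi_{-t}^{\ast}g)$ to $(\mathcal{M},g)$; hence $d_{g(t)}(p,q)=d(p,q)$ for all $t$, while differentiating the length of a fixed minimal geodesic under $\partial_t g=-2\,{\rm Ric}$ with ${\rm Ric}>0$ forces $d_{g(t)}(p,q)<d(p,q)$, a contradiction. You instead run the classical convexity argument: lift the segment chart by chart, use ${\rm Hess}_{\hat g}\hat f=\hat{\rm Ric}>0$ to get $h''>0$ for $h=f\circ\gamma$, and contradict $h'(0)=h'(L)=0$. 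Your approach is more elementary in that it needs none of the flow results (long-time existence via Theorem \ref{theo-R has lower bound}, the automorphism property, the orbifold Ricci flow), only the local soliton equation and the segment structure of Theorem \ref{theo-segment}; the price is exactly the lifting step you flag, which needs a cleaner justification than ``minimizing in each stratum'': when $\gamma\subset M_{\mbox{sing}}$ the interior could a priori meet several strata, so rather than arguing stratum by stratum it is better to note that the projection $\pi_{\alpha}:\hat U_{\alpha}\to U_{\alpha}$ is $1$-Lipschitz for the orbifold distance and that an admissible segment admits a continuous, length-preserving local lift; such a lift is then itself locally minimizing in $(\hat U_{\alpha},\hat g_{\alpha})$, hence a smooth geodesic, which also legitimizes the endpoint computation $h'(0)=\langle\hat\nabla\hat f_{\alpha}(\hat p),\hat\gamma'(0)\rangle=0$ at a singular endpoint. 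With that repair (and the standing completeness hypothesis, which you correctly invoke for the existence of the segment, just as the paper does implicitly), your argument is a valid alternative proof; by contrast, the paper's flow argument only ever needs lengths of admissible curves computed chart-wise, not that lifts are geodesics, but it relies on the heavier global results established earlier.
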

\begin{proof}
    Suppose there exist two points $p,q\in M$ such that $|\nabla f|(p)=|\nabla f|(q)=0$.  There is a minimal geodesic $\gamma(s):[0,d(p,q)]\to M$  from $p$ to $q$ and $\gamma(s)\in M$ for $s\in[0,d(p,q))$. Since $\phi_t$ is an isomorphism from $(\mathcal{M},g(t))$ to $(\mathcal{M},g)$. Note that $d_{g(t)}(p,q)=d(p,q)$ for all $t\in(-\infty,+\infty)$. Then, the length of $\gamma(s)$ w.r.t $g(t)$ satisfy
    \begin{align}
        \frac{d L(\gamma;g(t))}{dt}=\int_{0}^{d(p,q)}|\gamma^{\prime}(s)|^2_{g(t)}ds=-2\int_{0}^{d(p,q)}\mbox{Ric}_{g(t)}(\gamma^{\prime}(s),\gamma^{\prime}(s))ds<0.
    \end{align}
    Hence,
    \begin{align}
        d_{g(t)}(p,q)<L(\gamma;g(0))=d(p,q)
    \end{align}
    It contradicts the fact that $d_{g(t)}(p,q)=d(p,q)$ for all $t\in(-\infty,+\infty)$.

\end{proof}

Now, we  are ready to prove a structure theorem for steady Ricci solitons on orbifolds with positive Ricci curvature.

\begin{theo}\label{thm-structure of obifold solition}
Suppose $(\mathcal{M},g,f)$  is a steady gradient Ricci soliton on an $n$-dimensional Riemannian orbifold with positive Ricci curvature and essentially compact singular set\footnote{An orbifold $\mathcal{M}$ has essentially compact singular set if every connect component of 
 the singular locus $M_{\mbox{sing}}$ is compact. }. If $(\mathcal{M},g,f)$ admits a point $p$ with $|\nabla f|(p)=0$, then $M=\hat{M}/\Gamma$ for some finite group $\Gamma\subset O(n)$, where $(\hat{M},\hat{g},\hat{f})$ is a steady gradient Ricci soliton on a smooth manifold.
\end{theo}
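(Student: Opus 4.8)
The strategy is to show that $\mathcal{M}$ has an isolated singularity and then to ``unfold'' it globally. Since the soliton is steady ($\lambda=0$), the soliton equation in each chart reads $\widehat{\mathrm{Ric}}_{\hat g_\alpha}=\mathrm{Hess}_{\hat g_\alpha}\hat f_\alpha$, so positivity of the Ricci curvature makes each $\hat f_\alpha$ strictly convex, hence $f$ is strictly convex along every admissible geodesic of $\mathcal{M}$. By Lemma \ref{lem-uniqueness of stable point} the point $p$ furnished by hypothesis is the \emph{only} equilibrium point, and it realizes the global minimum of $f$. I first record that $f$ is a proper exhaustion: each sublevel set $\{f\le c\}$ is totally convex (a segment joining two of its points either lies in a totally geodesic singular stratum, or meets $M_{\mathrm{sing}}$ only at its endpoints by Theorem \ref{theo-segment}, and in both cases $f$ is convex along it), it has $p$ in its interior, and if it were unbounded a limiting argument would produce a geodesic ray $\gamma$ from $p$ contained in it; but along $\gamma$ one has $\tfrac{d^2}{ds^2}f(\gamma(s))=\mathrm{Ric}(\gamma',\gamma')>0$ with vanishing derivative at $s=0$, forcing $f(\gamma(s))\to\infty$, a contradiction. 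So every $\{f\le c\}$ is compact.

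Next I claim $M_{\mathrm{sing}}=\{p\}$. Since every component of $M_{\mathrm{sing}}$ is compact, Lemma \ref{lem-existence of stable point} puts an equilibrium point in each, so by uniqueness there is at most one component $\Sigma$, and it contains $p$. Let $q\in\Sigma$ be a point where $f|_\Sigma$ is maximal. Near $q$ the singular set is the image of finitely many closed totally geodesic submanifolds of a local model; if one of those through $\hat q$ had positive dimension, then $\hat f$ restricted to it would be strictly convex with a local maximum at $\hat q$, which is impossible. Hence $M_{\mathrm{sing}}$ is $0$-dimensional near $q$, so $\{q\}$ is open and closed in the connected set $\Sigma$, i.e.\ $\Sigma=\{q\}$; then $q$ is an equilibrium point, so $q=p$. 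Consequently $M\setminus\{p\}=M_{\mathrm{reg}}$ is a smooth manifold carrying an incomplete steady soliton with $\mathrm{Ric}>0$.

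Now I build $\hat M$. On $M_{\mathrm{reg}}$ the function $f$ has no critical points, so $f\colon M_{\mathrm{reg}}\to(f(p),\infty)$ is a proper surjective submersion; by Ehresmann's theorem over the contractible base it is a trivial fibre bundle, $M_{\mathrm{reg}}\cong N\times(f(p),\infty)$ with $N=f^{-1}(c_0)$ a compact manifold. Choose a local model $(\hat U_p,\Gamma_p)$ at $p$ with $\Gamma_p$ the isotropy group and $\hat U_p$ a small convex normal ball around $\hat p$; for $c_0$ close to $f(p)$ the set $\hat V:=\{\hat f<c_0\}\subset\hat U_p$ is a $\Gamma_p$-invariant convex ball around $\hat p$, and $\Gamma_p$ acts \emph{freely} on $\hat V\setminus\{\hat p\}$, since a nontrivial fixed point there would be a singular point of $\mathcal{M}$ other than $p$. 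Thus $\hat V\setminus\{\hat p\}\to\{f<c_0\}\setminus\{p\}$ is a regular $\Gamma_p$-covering of manifolds, and because the product structure makes $\{f<c_0\}\setminus\{p\}\hookrightarrow M_{\mathrm{reg}}$ a homotopy equivalence, its classifying homomorphism extends to a surjection $\pi_1(M_{\mathrm{reg}})\twoheadrightarrow\Gamma_p$. Let $\hat M_{\mathrm{reg}}\to M_{\mathrm{reg}}$ be the associated regular $\Gamma_p$-cover; over $\{f<c_0\}\setminus\{p\}$ it is $\hat V\setminus\{\hat p\}$, so filling in $\hat p$ by means of the chart $\hat V$ yields a smooth manifold $\hat M:=\hat M_{\mathrm{reg}}\sqcup\{\hat p\}$ with a smooth effective $\Gamma_p$-action fixing $\hat p$ and $\hat M/\Gamma_p=M$ as orbifolds. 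Pulling back $g$ and $f$ gives a complete smooth metric $\hat g$ and smooth function $\hat f$ satisfying $\widehat{\mathrm{Ric}}_{\hat g}=\mathrm{Hess}_{\hat g}\hat f$ with $\widehat{\mathrm{Ric}}_{\hat g}>0$ (these hold on $\hat M_{\mathrm{reg}}$, a local isometry onto $M_{\mathrm{reg}}$, and near $\hat p$, the local model). Finally $\Gamma_p$ acts by isometries fixing $\hat p$, so its isotropy representation embeds it into $O(T_{\hat p}\hat M)\cong O(n)$, faithfully since an isometry of connected $\hat M$ with trivial $1$-jet at $\hat p$ is the identity. This exhibits $M=\hat M/\Gamma$ with $\Gamma=\Gamma_p\subset O(n)$; the case $M_{\mathrm{sing}}=\varnothing$ is trivial with $\hat M=M$, $\Gamma=\{e\}$.

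The bulk of the work sits in the last step: one must check that the $\Gamma_p$-cover defined only on a punctured neighbourhood of $p$ genuinely extends to a cover of $M_{\mathrm{reg}}$ — which is exactly where the exhaustion/product structure of the first step is used — and that filling in $\hat p$ produces a bona fide smooth manifold to which $\hat g$ and $\hat f$ extend. The reduction $M_{\mathrm{sing}}=\{p\}$ is the other delicate point, and strict convexity of $f$ along the totally geodesic singular strata is what drives it.
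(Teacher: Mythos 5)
Your argument is correct in outline, and it reaches the theorem by a genuinely different route from the paper's. For the preliminary step $M_{\mathrm{sing}}=\{p\}$, the paper combines Lemma \ref{lem-existence of stable point} and Lemma \ref{lem-uniqueness of stable point} with the flow-invariance of $M_{\mathrm{sing}}$ (Lemma \ref{lem-singular point stay in singular set}): the max and min of $f$ on the unique compact component are both equilibrium points, so $f$ is constant on $M_{\mathrm{sing}}$ and every singular point is an equilibrium; you instead rule out positive-dimensional singular strata by strict convexity of $\hat f$ along the totally geodesic strata at a maximum of $f|_{\Sigma}$ --- both arguments work. For the main construction the paper is hands-on: it quotes \cite{DZ7} to identify the level sets $\{\hat f=a\}$ with $\mathbb{S}^{n-1}$ and \cite{CaCh} for linear growth of $f$, trivializes $\{f>a\}$ and $\{a<\hat f<b\}$ by the flow of $\nabla f/|\nabla f|^{2}$, verifies the compatibility diagram $\iota\circ\pi^{\prime}=\pi\circ\Psi$, and builds $\tilde M$ by gluing $\{\hat f<b\}$ to $\{\hat f=a\}\times(a,+\infty)$ with the pulled-back metric. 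You argue more softly: properness of $f$ directly from convexity (so no appeal to \cite{CaCh}), Ehresmann's theorem to get $M_{\mathrm{reg}}\cong N\times(f(p),\infty)$, and covering-space theory to extend the regular $\Gamma_p$-cover $\hat V\setminus\{\hat p\}\to\{f<c_0\}\setminus\{p\}$ over all of $M_{\mathrm{reg}}$ (using that the inclusion of the punctured sublevel set is a homotopy equivalence), then cap off with $\hat V$. What your route buys is a cleaner treatment of equivariance, of why $\hat M/\Gamma_p=M$ as orbifolds, and of the claim $\Gamma\subset O(n)$ via the isotropy representation at $\hat p$, which the paper leaves implicit; what the paper's route buys is an elementary, explicitly metric construction in which the soliton structure and the identification of the cross-section with $\mathbb{S}^{n-1}$ are visible by inspection, without invoking $\pi_1$, classifying homomorphisms or Ehresmann. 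Do tighten the points you only sketch: you need $n\ge 2$ so that $\hat V\setminus\{\hat p\}$ is connected and the classifying homomorphism really is onto $\Gamma_p$; completeness of $(\mathcal{M},g)$ is used implicitly (existence of segments, of the limiting ray, and completeness of $\hat g$), exactly as in the paper's own lemmas; and Hausdorffness and smoothness of the capped-off space $\hat M_{\mathrm{reg}}\sqcup_{\hat V\setminus\{\hat p\}}\hat V$, together with the fact that the covering isomorphism over the punctured sublevel set is automatically an isometry intertwining the two candidate definitions of $\hat g$ and $\hat f$, each deserve a line.
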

\begin{proof}
    By Lemma\ref{lem-existence of stable point} and Lemma\ref{lem-uniqueness of stable point}, it is easy to see  that $M_{\mbox{sing}}$ has only one connected component. Let $p_1$ and $p_2$ be the points such that $f(p_1)=\max_{x\in M_{\mbox{sing}}}f(x)$ and $f(p_2)=\min_{x\in M_{\mbox{sing}}}f(x)$. By the proof of Lemma\ref{lem-existence of stable point}, $|\nabla f|(p_1)=|\nabla f|(p_2)=0$. By Lemma\ref{lem-uniqueness of stable point}, $p_1=p_2$. So $f(x)$ is a constant on $M_{\mbox{sing}}$. For any $x\in M_{\mbox{sing}}$, $\phi_t(x)\in M_{\mbox{sing}}$ by Lemma\ref{lem-singular point stay in singular set}. So, $f(\phi_t(x))=f(x)$ for $t\in(-\infty,+\infty)$. Now, we get 
    \begin{align}
         |\nabla f|^2(x)= \frac{d f(\phi_t(x))}{d t}|_{t=0}= 0.
    \end{align}
   By Lemma\ref{lem-uniqueness of stable point}, $x=p$. Hence, we get $M_{\mbox{sing}}=\{p\}$.

  Let $(U,\hat{U},\pi,\Gamma)$ be  a local model  around $p\in U$ along with $\hat{p}\in \hat{U}$ project to $p$ such that $\Gamma=\Gamma_p$. Let $\pi:\hat{U}\to U$ be the projection and $\hat{f}=f\circ\pi$. Note that $\hat{\nabla} \hat{f}(\hat{p})=0$ and $\hat{p}$ is the only equilibrium point in $\hat{U}$. Since the Ricci curvature is positive, we get that $\{\hat{f}=a\}$ is diffeomorphic to $\mathbb{S}^{n-1}$ for $a> \hat{f}(\hat{p})$ by Lemma 2.3 in \cite{DZ7}. Note that $\{\hat{f}=a\}/\Gamma$ is diffeomorphic to $\{f=a\}$, for any $a> \hat{f}(\hat{p})$. Since $|\nabla f|(p)=0$ and Ricci curvature is positive, following Proposition 2.3 in \cite{CaCh} we get that there exist constants $C_1$ and $C_2$ such that 
  \begin{align}
      C_1r(x)\le  f(x)\le C_2 r(x),~\forall~r(x)\ge r_0.
  \end{align}
  Therefore, $f(M)=[f(p),+\infty)$.
  
  Fix an $a> f(p)$. Note that $|\nabla f|(x)\neq0$ when $f(x)\ge f(p)$.  Since $p$ is the only singular point, $\{f(x)>a\}$ is a smooth manifold. On $M\setminus\{p\}$, we can define $\Phi_t$ which is generated by $\frac{\nabla f}{|\nabla  f|^2}$. Now, we define $\iota:\{f=a\}\times (a,+\infty)\rightarrow\{f>a\}$ such that $\iota(x,s)=\Phi_{s-a}(x)$, for all $(x,s)\in \{f=a\}\times (a,+\infty)$. It's easy to check that $\iota^{-1}(y)=\Phi_{a-f(y)}(y)$. Hence, $\iota$ is a diffeomorphism from $\{f=a\}\times (a,+\infty)$ to $\{f>a\}$. Later on, we will stick $\{\hat{f}<b\}$ and the universal covering of $\{f=a\}\times (a,+\infty)$  to make the universal covering of $\mathcal{M}$.
   
  Now we choose $b>a$ such that $\{ \hat{f}\le b\}\subset \hat{U}$. Note that $|\hat{\nabla} \hat{f}|(x)\neq0$ when $\hat{f}(x)\ge \hat{f}(\hat{p})$. Since $\Gamma=\Gamma_p$, $\hat{p}$ is the only point in $\hat{U}$ such that $|\hat{\nabla} \hat{f}|(\hat{p})=0$. Let $\hat{\Phi}_t$ be generated by $\frac{\hat{\nabla} \hat{f}}{|\hat{\nabla}  \hat{f}|^2}$ on $\hat{U}\setminus\{\hat{p}\}$. Similarly, we can also define $\Psi:\{\hat{f}=a\}\times (a,b)\rightarrow\{a<\hat{f}<b\}$ such that $\Psi(\hat{x},s)=\hat{\Phi}_{s-a}(\hat{x})$, for all $(\hat{x},s)\in \{\hat{f}=a\}\times (a,b)$. Since $\Psi^{-1}(\hat{y})=\hat{\Phi}_{a-\hat{f}(\hat{y})}(\hat{y})$, $\Psi$ is a diffeomorphism from $\{\hat{f}=a\}\times (a,b)$ to $\{a<\hat{f}<b\}$. 
       
       Define $\pi^{\prime}(\hat{x},s)=(\pi(\hat{x}),s)$. We claim that $\iota\circ\pi^{\prime}=\pi\circ\Psi$, i.e. the following commutative diagram holds.

        \begin{displaymath}
  \xymatrix{
\{\hat{f}=a\}\times(a,b)\ar[r]^{\quad\Psi}\ar[d]_{\pi^{\prime}}&
  \{a<\hat{f}<b\} \ar[d]^{\pi}\\
 \{f=a\}\times(a,b)\ar[r]_{\quad\iota}&\{a<f<b\}
  }
  \end{displaymath}
  Let $\hat{\phi}_t$ be generated by $\hat{\nabla}\hat{f}$  on $\hat{U}$ and $\phi_t$ be generated by $\nabla f$ on $M\setminus\{p\}$\footnote{$M\setminus\{p\}$ can be regard as a manifold.}. Note that the integral curves of $\hat{\nabla}\hat{f}$ and  $\frac{\hat{\nabla} \hat{f}}{|\hat{\nabla}  \hat{f}|^2}$ have the same image. Therefore, for any $(\hat{x},s)\in \{\hat{f}=a\}\times(a,b)$, there exists a unique constant $t_s$ such that 
  \begin{align}\label{eq:same image}
      \hat{\Phi}_{s-a}(\hat{x})=\hat{\phi}_{t_s}(\hat{x}).
  \end{align}
  By Lemma \ref{lem-regualar point stay regular} and (\ref{eq:same image}),  
  \begin{align}\label{eq:a}
      \pi\circ\Psi(\hat{x},s)=\pi\circ\hat{\Phi}_{s-a}(\hat{x})=\pi\circ\hat{\phi}_{t_s}(\hat{x})=\phi_{t_s}(x),
  \end{align}
  where $x=\pi(\hat{x})$.

On the other hand, the integral curves of $\nabla f$ and  $\frac{\nabla f}{|\nabla  f|^2}$ have the same image. So, there exists a unique constant $\tau_s$ such that 
\begin{align}\label{eq:d}
    \Phi_{s-a}(x)=\phi_{\tau_s}(x).
\end{align}
It follows that
\begin{align}\label{eq:b}
    \iota\circ\pi^{\prime}(\hat{x},s)=\iota(x,s)=\Phi_{s-a}(x)=\phi_{\tau_s}(x).
\end{align}
By integrating $\hat{f}(\hat{\Phi}_{t}(\hat{x}))$ and  $f(\Phi_{t}(x))$ along the integral curves of $\frac{\hat{\nabla} \hat{f}}{|\hat{\nabla}  \hat{f}|^2}$ and $\frac{\nabla f}{|\nabla  f|^2}$, we have 
\begin{align}\label{eq:c}
    \hat{f}(\hat{\Phi}_{s-a}(\hat{x}))=s=f(\Phi_{s-a}(x)).
\end{align}
By (\ref{eq:a}), (\ref{eq:d}) and (\ref{eq:c}),
\begin{align}
    f(\phi_{t_s}(x))=\hat{f}(\hat{\Phi}_{s-a}(\hat{x}))=f(\Phi_{s-a}(x))=f(\phi_{\tau_s}(x)).\notag
\end{align}
Note that $f(\phi_t(p))$ is a monotone function with respect to $t$. Hence, 
\begin{align}\label{eq:e}
t_s=\tau_s.
\end{align}
By (\ref{eq:a}), (\ref{eq:b}) and (\ref{eq:e}), we get $\iota\circ\pi^{\prime}=\pi\circ\Psi$. The claim is complete.
  
   By giving these mainfolds with the induced metrics, we get the following commutative diagram.
  \begin{displaymath}
  \xymatrix{
(\{\hat{f}=a\}\times(a,b),(\iota\circ \pi^{\prime})^{\ast}g)\ar[r]^{\quad\quad\Psi}\ar[d]_{\pi^{\prime}}&
  (\{a<\hat{f}<b\},\hat{g}) \ar[d]^{\pi}\\
 (\{f=a\}\times(a,b),\iota^{\ast}g)\ar[r]_{\quad\iota}&(\{a<f<b\},g)
  }
  \end{displaymath}
Note that $\Psi$ is an isomorphism. The isomorphism $\iota$ and the projection $\pi^{\prime}$ actually have a larger domain of definition  as the following.

\begin{displaymath}
  \xymatrix{
  (\{\hat{f}=a\}\times(a,+\infty),(\iota\circ \pi^{\prime})^{\ast}g)\ar[dr]^{\iota\circ \pi^{\prime}}\ar[d]_{\pi^{\prime}}\\
 (\{f=a\}\times(a,+\infty),\iota^{\ast}g)\ar[r]_{\qquad\iota}&(\{f>a\},g)
  }
  \end{displaymath}

 Let $\hat{V}=\{\hat{f}<b\}$. Now,  we stick $\hat{V}$ and $\{\hat{f}=a\}\times (a,+\infty)$ via the isomorphism $\Psi$  to make the universal covering of $\mathcal{M}$. we can define $\tilde{M}=\hat{V}\sqcup_{\Psi}(\{\hat{f}=a\}\times(a,+\infty))$. For $x\in \hat{V}$, let $\tilde{g}=\hat{g}$. For $x\in \{\hat{f}=a\}\times(a,+\infty)$, let $\tilde{g}=(\iota\circ \pi^{\prime})^{\ast}g$. We  define $\tilde{f}$ as following.
 $$\tilde{f}=\begin{cases}
 \hat{f}, &\text{on}\quad \hat{V},\\
 f\circ\iota\circ\pi^{\prime},&\text{on} \quad\{\hat{f}=a\}\times(a,+\infty).
 \end{cases}
 $$
 Then, it is easy to check that $(\tilde{M},\tilde{g},\tilde{f})$ is a steady gradient Ricci soltion and $M=\tilde{M}/\Gamma$.

\end{proof}

In the end, we prove Theorem\ref{main-1} and Theorem\ref{main-2}. We first recall a lemma on the potiential $f$.

\begin{lem}\label{lem-growth of f}
Suppose $(\mathcal{M},g,f)$  is a non-Ricci-flat steady gradient Ricci soliton on an $n$-dimensional Riemannian orbifold with compact singular set. Suppose the scalar curvaure of $(M,g)$ has uniform decay and the scalar curvature satisfies
\begin{align}\label{eq: dRdt geq 0 on M minus K}
\left.\frac{\rm d}{{\rm d} t}\right|_{t=0}R(\phi_t(p))\ge 0 ~\text{ for all }p\in M\setminus K,
\end{align}
where $K$ is a compact subset of $M$. Then, $f$ satifies
\begin{align}\label{linear of f}
C_1r(x)\le f(x)\le C_2 r(x),~\forall~r(x)\ge r_0,
\end{align}
where $C_1$, $C_2$ and $r_0$ are constants. $r(x)$ is the distance function from a fixed point $x_0$.
\end{lem}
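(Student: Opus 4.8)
The plan is to normalise via Hamilton's identity, read off the upper bound from a Lipschitz estimate, and get the lower bound by flowing the reparametrised gradient field of $f$ inward from a far point to a fixed compact core; the orbifold points will enter only through that core.

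For a steady soliton the identity (\ref{Hamilton's identity}) reads $R+|\nabla f|^2=C$ on all of $M$ (it holds on $M_{\mbox{reg}}$ and extends by density, as in Lemma \ref{lem-completeness when R has lower bound}). Theorem \ref{theo-R has lower bound} gives $R\ge 0$, so if $C=0$ then $R\equiv 0$ and $\nabla f\equiv 0$, whence $\mbox{Hess}\,\hat f=\hat{\mbox{Ric}}\equiv 0$ in every chart and $\mathcal M$ is Ricci-flat, contrary to hypothesis; hence $C>0$, and rescaling $g$ we may take $C=1$, so $0\le R\le 1$ and $|\nabla f|\le 1$ on $M$. Since $(M_{\mbox{reg}},g,f)$ is a smooth soliton, $R$ satisfies $\Delta R+\langle\nabla f,\nabla R\rangle=-|\mbox{Ric}|^2\le 0$ there, so by the strong maximum principle and the connectedness of $M_{\mbox{reg}}$ (Theorem \ref{theo-segment}) non-Ricci-flatness forces $R>0$ on $M_{\mbox{reg}}$. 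The upper bound is then immediate: $|\nabla f|\le 1$ makes $f$ $1$-Lipschitz for the orbifold distance (estimate $\int|\hat\nabla\hat f|$ along admissible curves chart by chart), so $f(x)\le f(x_0)+r(x)\le 2\,r(x)$ once $r(x)\ge r_0:=\max\{1,|f(x_0)|\}$.

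For the lower bound fix $\rho_0$ so large that $M_{\mbox{sing}}\cup K\subset B(x_0,\rho_0)$ (both compact) and, by the uniform decay of $R$, that $R<\tfrac12$ on $\Omega:=M\setminus\overline{B(x_0,\rho_0)}$. On $\Omega$ the orbifold is an honest Riemannian manifold, $|\nabla f|>\tfrac1{\sqrt2}$, $f$ has no critical point, and (\ref{eq: dRdt geq 0 on M minus K}) holds; via $R+|\nabla f|^2=1$ this says exactly that $R$ is monotone along the integral curves of $\nabla f$ in $\Omega$, and --- since $R$ takes its maximum $1$ at the equilibrium of $f$, which lies in the core by Lemma \ref{lem-existence of stable point} when $M_{\mbox{sing}}\neq\emptyset$ --- the operative form is that $R$ is non-decreasing along these curves in the direction of decreasing $f$. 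Given $x$ with $r(x)>\rho_0$, let $s\mapsto\Phi_s(x)$ be the integral curve of $-\nabla f/|\nabla f|^2$ issuing from $x$; then $f(\Phi_s(x))=f(x)-s$ and the speed $|\dot\Phi_s|=|\nabla f|^{-1}$ is $<\sqrt2$ while the curve stays in $\Omega$. Let $S^\ast$ be the first $s$ at which $\Phi_s(x)$ meets $\overline{B(x_0,\rho_0)}$. The crux is that $S^\ast<\infty$: otherwise the curve stays in $\Omega$ with $f(\Phi_s(x))=f(x)-s\to-\infty$, the speed bound forbids it from remaining in a compact subset of $M$ (on which $f$ is bounded), so $r(\Phi_s(x))\to\infty$ and hence $R(\Phi_s(x))\to 0$ by the decay hypothesis, contradicting $R(\Phi_s(x))\ge R(x)>0$, which follows from the monotonicity above. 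Given $S^\ast<\infty$, $f(x)-S^\ast=f(\Phi_{S^\ast}(x))\ge\min_{\overline{B(x_0,\rho_0)}}f=:m$ and $r(x)\le\sqrt2\,S^\ast+\rho_0$, so $f(x)\ge m+\tfrac1{\sqrt2}(r(x)-\rho_0)$, which --- after enlarging $r_0$ --- gives $f(x)\ge C_1\,r(x)$ for a suitable $C_1>0$ and all $r(x)\ge r_0$.

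The real obstacle, I expect, is precisely this claim that $S^\ast<\infty$, i.e.\ that the inward-reparametrised gradient trajectory cannot escape to infinity; that is the one place where (\ref{eq: dRdt geq 0 on M minus K}) is used essentially, and where one must settle the direction in which $R$ is monotone along $\nabla f$-trajectories. The orbifold structure, by contrast, is harmless here: it is confined to the compact core $B(x_0,\rho_0)$, where only continuity and compactness of $f$ are used, and all of the analysis lives on the smooth part $\Omega$ (Theorem \ref{theo-segment} serving only to give connectedness of $M_{\mbox{reg}}$). The remaining steps are the routine Lipschitz-and-length bookkeeping familiar from the smooth-manifold case.
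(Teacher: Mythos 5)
Your proposal is correct and is essentially the argument the paper relies on: the paper reduces to the smooth manifold outside a compact set containing $M_{\mbox{sing}}\cup K$ and cites the proof of Theorem 2.1 in \cite{CDM22}, which is precisely your scheme of normalizing $R+|\nabla f|^2=1$ (positive since the soliton is not Ricci-flat, with $R>0$ on $M_{\mbox{reg}}$ by the strong maximum principle), getting the upper bound from $|\nabla f|\le 1$, and getting the lower bound by showing the reparametrised $-\nabla f$ trajectory from a far point must hit a fixed compact core in finite time, because otherwise $f\to-\infty$ forces escape to infinity where uniform decay contradicts the monotonicity of $R$ along the trajectory. The only wobble is your heuristic for fixing the direction of monotonicity: it is settled by the paper's convention (as in Lemma \ref{lem-decay}, $\phi_t$ in (\ref{eq: dRdt geq 0 on M minus K}) is generated by $-\nabla f$, so the hypothesis says $\langle\nabla R,\nabla f\rangle\le 0$ outside $K$), which yields exactly the direction you use, so the appeal to an equilibrium point (which need not exist when $M_{\mbox{sing}}=\emptyset$) is unnecessary.
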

\begin{proof}
 Outside a large compact set, we may treat the orbifold as a manifold under our assumption.  Applying the argument in the proof of Theorem 2.1 in \cite{CDM22}, we may conclude (\ref{linear of f}).
\end{proof}

Now we prove Theorem\ref{main-1}.

\begin{proof}[Proof of Theorem\ref{main-1}]
By Lemma \ref{lem-existence of stable point}, there exists a point $p_0$ such that $|\nabla f|(p_0)=0$.
By Theorem\ref{thm-structure of obifold solition}, we may assume that $M=\hat{M}/\Gamma$ for some finite group $\Gamma\subset O(n)$ and $(\hat{M},\hat{g},\hat{f})$ is a smooth steady gradient Ricci soliton. Therefore, $(\hat{M},\hat{g},\hat{f})$ is also $\kappa-$noncollapsed with positive curvature operator. Note the scalar curvature satisfies
\begin{align}
R(x)\le \frac{C_1}{r(x)},~\forall~r(x)\ge r_0.
\end{align}
where $r(x)$is the distance function from a fixed point $x_0$ and $C_1$, $r_0$ are constants. Note that the positivity of Ricci curvature implies condition (\ref{eq: dRdt geq 0 on M minus K}).  By Lemma\ref{lem-growth of f}, we have
\begin{align}
R(x)\le \frac{C_2}{f(x)},~\forall~f(x)\ge r_1,
\end{align}
for some constants $r_1$ and $C_2$. Suppose $\pi:\hat{M}\to M$ is the covering map. Note that 
\begin{align}
\hat{R}(\hat{x})=R(x),~\hat{f}(\hat{x})=f(x),
\end{align}
if $\pi(\hat{x})=x$. It follows that
\begin{align}\label{linear decay-1}
\hat{R}(x)=R(\pi(x))\le \frac{C_2}{f(\pi(x))}=\frac{C_2}{\hat{f}(x)},~\forall~x\in \hat{M}.
\end{align}
Suppose $\pi(\hat{p}_{0}) =p_0$.  Then, $|\hat{\nabla}\hat{f}|(\hat{p}_{0})=|\nabla f|(p_{0})=0$.
Then, $(\hat{M},\hat{g},\hat{f})$ has positive Ricci curvature and an equilbrium point. Hence, $\hat{f}$ has linear growth by \cite{CaCh}. By \ref{linear decay-1}, we get
\begin{align}
\hat{R}(x)\le \frac{C_3}{\hat{r}(x)},~\forall~\hat{r}(x)\ge r_2,
\end{align}
where $\hat{r}(x)$is the distance function from a fixed point $\hat{x}_0$ and $C_3$, $r_2$ are positive constants. By Theorem 1.2 in \cite{DZ5}, we get that $(\hat{M},\hat{g},\hat{f})$ is isometric to the Bryant soliton. 

\end{proof}

We need the following lemma on the curvature decay of scalar curvature to prove Theorem\ref{main-2}.

\begin{lem}\label{lem-decay}
Let $(\mathcal{M},g,f)$ be a non-Ricci-flat steady gradient Ricci soliton on an $n$-dimensional Riemannian orbifolds with compact singular set. Suppose that the scalar curvature decays uniformly and that
\begin{align}\label{positive lower bound of derivative}
\frac{1}{R^2(p)}\cdot\left.\frac{\rm d}{{\rm d} t}\right|_{t=0 }R(\phi_t(p))\ge \epsilon>0\,\text{ for all }\,p\in M\setminus K,
\end{align}
where $K$ is a compact subset of $M$,
$\phi_t$ is generated by $-\nabla f$,
and $\epsilon$ is independent of $p,t$. Then there exist constants $r_0$ and $c$ such that
\begin{align}\label{linear decay}
 R(x)\le \frac{c}{r(x)},~\forall~r(x)\ge r_0.
\end{align}
 where $r(x)$ is the distance function from a fixed point $x_0\in M$.
\end{lem}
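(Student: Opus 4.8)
\emph{Proof plan.} The point is that \eqref{positive lower bound of derivative} says exactly that $1/R$ grows at rate at least $\epsilon$ along the integral curves of $\nabla f$, while the linear growth of $f$ — and hence of $r$ — along those same curves lets one translate this into linear decay of $R$ in terms of $r$.

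First I would record the preliminaries. By Theorem~\ref{theo-R has lower bound} we have $R\ge 0$, and $R>0$ on $M\setminus K$ since the left-hand side of \eqref{positive lower bound of derivative} is only meaningful where $R\neq 0$. As in the proof of Lemma~\ref{lem-completeness when R has lower bound}, $R+|\nabla f|^{2}$ is a constant $a_{0}$; hence $|\nabla f|^{2}\le a_{0}$, and $a_{0}>0$ because otherwise $R\equiv 0\equiv|\nabla f|$, forcing the soliton to be Ricci-flat. From \eqref{positive lower bound of derivative} and $R^{2}>0$ one gets $\left.\frac{d}{dt}\right|_{t=0}R(\phi_{t}(p))\ge 0$ for $p\in M\setminus K$, so Lemma~\ref{lem-growth of f} applies and yields constants $C_{1},C_{2},r_{1}>0$ with $C_{1}r(x)\le f(x)\le C_{2}r(x)$ for $r(x)\ge r_{1}$. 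Consequently every sublevel set $\{f\le L\}$ is closed and bounded, hence compact; since $K$ and $M_{\mbox{sing}}$ are compact, I would fix $L>0$ so large that $\{f\le L\}\supset K\cup M_{\mbox{sing}}$, that $L>\max_{K}f$, and (using the uniform decay of $R$) that $\mu:=a_{0}-\sup_{\{f\ge L\}}R>0$. Then $\{f>L\}$ lies in the smooth manifold $M_{\mbox{reg}}$ and in $M\setminus K$, where \eqref{positive lower bound of derivative} forces $\nabla f\neq 0$, and on $\{f\ge L\}$ one has $|\nabla f|^{2}=a_{0}-R\ge\mu$.

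Next I would fix $x$ with $f(x)>L$ and flow it forward by $\phi_{t}$, the flow of $-\nabla f$ constructed in Section~\ref{section-existence of Ricci flow}. Since $\frac{d}{dt}f(\phi_{t}(x))=-|\nabla f|^{2}(\phi_{t}(x))\le -\mu$ while $\phi_{t}(x)$ stays in $\{f\ge L\}$, there is a finite $s_{0}>0$ with $f(\phi_{s_{0}}(x))=L$ and $\phi_{t}(x)\in\{L\le f\le f(x)\}\subset M\setminus K$ for $t\in[0,s_{0}]$. Put $\gamma(s)=\phi_{s_{0}-s}(x)$, so $\gamma$ is the integral curve of $+\nabla f$ with $\gamma(0)\in\{f=L\}$, $\gamma(s_{0})=x$, and $\gamma(s)\in M\setminus K$ for all $s\in[0,s_{0}]$. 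Since $\phi_{t}$ is generated by $-\nabla f$,
\begin{align}
\frac{d}{ds}\,\frac{1}{R(\gamma(s))}=-\frac{\langle\nabla R,\nabla f\rangle(\gamma(s))}{R^{2}(\gamma(s))}=\frac{1}{R^{2}(\gamma(s))}\left.\frac{d}{dt}\right|_{t=0}R(\phi_{t}(\gamma(s)))\ge\epsilon ,\notag
\end{align}
by \eqref{positive lower bound of derivative}, for all $s\in[0,s_{0}]$. Integrating over $[0,s_{0}]$ gives $\frac{1}{R(x)}\ge\frac{1}{R(\gamma(0))}+\epsilon s_{0}\ge\epsilon s_{0}$. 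Moreover $\frac{d}{ds}f(\gamma(s))=|\nabla f|^{2}(\gamma(s))\le a_{0}$ with $f(\gamma(0))=L$, so $f(x)-L\le a_{0}s_{0}$, i.e. $s_{0}\ge (f(x)-L)/a_{0}$. Combining with $f(x)\ge C_{1}r(x)$ (valid for $r(x)\ge r_{1}$), $\frac{1}{R(x)}\ge\frac{\epsilon}{a_{0}}(C_{1}r(x)-L)$. Finally, for $r(x)\ge r_{0}:=\max\{r_{1},2L/C_{1}\}$ we have $f(x)\ge C_{1}r(x)\ge 2L>L$ (so all of the above applies) and $C_{1}r(x)-L\ge\tfrac12 C_{1}r(x)$, hence $R(x)\le \frac{2a_{0}}{\epsilon C_{1}}\cdot\frac{1}{r(x)}$, which is \eqref{linear decay} with $c=2a_{0}/(\epsilon C_{1})$.

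The step I expect to require the most care is the orbifold bookkeeping in the second paragraph: one must arrange that the compact singular set together with $K$ is enclosed in a single sublevel set $\{f\le L\}$, so that the flow lines $\gamma$, the scalar-curvature evolution equation, the identity $R+|\nabla f|^{2}=a_{0}$, and the estimates imported from \cite{CDM22} all take place on the honest manifold $\{f>L\}\subset M_{\mbox{reg}}$, and one must use that the flow building $\gamma$ is precisely the globally defined automorphism flow $\phi_{t}$ of Theorem~\ref{theo-automorphism}, so that it exists for all time and genuinely reaches $\{f=L\}$. With that reduction in hand, the remaining computation is the manifold argument of \cite{CDM22}.
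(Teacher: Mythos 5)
Your proposal is correct and follows essentially the same route as the paper: the paper's proof simply works outside a large compact set (containing $K$ and the compact singular locus) so that everything happens on a smooth manifold and then invokes the argument of Theorem 3.1 in \cite{CDM22}, which is exactly the computation you reconstruct — integrating $\frac{d}{ds}\frac{1}{R}\ge\epsilon$ along integral curves of $\nabla f$, bounding the flow parameter via $|\nabla f|^2\le a_0$ from Hamilton's identity, and converting to distance via the linear growth of $f$ from Lemma \ref{lem-growth of f}. Your orbifold bookkeeping (choosing $L$ with $K\cup M_{\mbox{sing}}\subset\{f\le L\}$ and using the global flow of Section \ref{section-existence of Ricci flow}) matches the paper's intended reduction, so no gap.
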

\begin{proof}
 Outside a large compact set, we may treat the orbifold as a manifold under our assumption.  Applying the argument in the proof of Theorem 3.1 in \cite{CDM22}, we may conclude (\ref{linear of f}).
\end{proof}

Now we prove Theorem\ref{main-2}.

\begin{proof}[Proof of Theorem\ref{main-2}]

 We first show that $(\mathcal{M},g,f)$ has linear curvature decay and $f$ has linear growth. Note that 
\begin{align}\label{condition for linear decay}
\frac{1}{R^2(p)}\cdot\left.\frac{\rm d}{{\rm d} t}\right|_{t=0 }R(\phi_t(p))=\frac{\Delta R(p)+2|{\Ric}|^2(p)}{R^2(p)}.
\end{align}
 
By Lemma \ref{lem-decay}, Lemma \ref{lem-growth of f} and (\ref{condition for linear decay}). It suffices to show  
\begin{align}\label{inequality-a}
\frac{\Delta R(p)+2|{\Ric}|^2(p)}{R^2(p)}\ge \epsilon>0\,\text{ for all }\,p\in M\setminus K,
\end{align}
for some compact set $K$.

We prove (\ref{inequality-a}) by contradiction. 
By our assumption, for any $p_i\to\infty$, $(M,g_i(t),p_i)$ converge to $(\mathbb{S}^{n-1}/\Gamma\times\mathbb{R},g_{\infty}(t),p_{\infty})$, where $g_{i}(t)=R(p_i)g(R^{-1}(p_i)t)$ and $g_{\infty}(t)=g_{\mathbb{S}^{n-1}/\Gamma}(t)+ds^2$. By the convergence, we have
\begin{align}\label{limit-2}
\frac{\Delta_{\infty} R_{\infty}(p_{\infty})+2|{\Ric}_{\infty}|^2(p_{\infty})}{R_{\infty}^2(p_{\infty})}=\lim_{i\to\infty}\frac{\Delta R(p_i)+2|{\Ric}|^2(p_i)}{R^2(p_i)}.
\end{align}
Suppose (\ref{inequality-a}) doesn't hold. Then, there exists a sequence of points $p_i$ tending to infinity such that
\begin{align}
\lim_{i\to\infty}\frac{\Delta R(p_i)+2|{\Ric}|^2(p_i)}{R^2(p_i)}\le0.\notag
\end{align}
Therefore,
\begin{align}\label{limit-1}
\frac{\Delta_{\infty} R_{\infty}(p_{\infty})+2|{\Ric}_{\infty}|^2(p_{\infty})}{R_{\infty}^2(p_{\infty})}\le0.
\end{align}
On the other hand, we can compute directly to show that 
\begin{align}
\frac{\Delta_{\infty} R_{\infty}(p_{\infty})+2|{\Ric}_{\infty}|^2(p_{\infty})}{R_{\infty}^2(p_{\infty})}=\frac{2}{n-1}.\notag
\end{align}
It contradicts (\ref{limit-1}).

Hence, we have proved (\ref{inequality-a}). By lemma \ref{lem-decay}, we have 
\begin{align}\label{linear decay inequality}
 R(x)\le \frac{c}{r(x)},~\forall~r(x)\ge r_0.
\end{align}

By Lemma \ref{lem-existence of stable point}, there exists a point $p_0$ such that $|\nabla f|(p_0)=0$.
By Theorem\ref{thm-structure of obifold solition}, we may assume that $M=\hat{M}/\Gamma$ for some finite group $\Gamma\subset O(n)$ and $(\hat{M},\hat{g},\hat{f})$ is a smooth steady gradient Ricci soliton with nonnegative sectional curvature and positive Ricci curvature. Note that we already get the linear decay (\ref{linear decay inequality}) now. We can apply the argument in the proof of Theorem \ref{main-1} to show
\begin{align}
\hat{R}(x)\le \frac{c_1}{\hat{r}(x)},~\forall~\hat{r}(x)\ge r_1,
\end{align}
where $\hat{r}(x)$ is the distance function from a fixed point $\hat{x}_0$ and $c_1$, $r_1$ are positive constants. 

We also claim $(\hat{M},\hat{g},\hat{f})$ is $\kappa$-noncollapsed. It suffices to show that $(M,g,f)$ is $\kappa$-noncollapsed. We prove by contradiction. If $(M,g,f)$ is not $\kappa$-noncollapsed for any $\kappa>0$, then there exist $x_i\in M$ and $r_i>0$ such that 
\begin{align}
R(x_i)\le \frac{1}{r^2_i},~\forall~i\in\mathbb{N},
\end{align}
and
\begin{align}\label{limit=0}
\lim_{i\to\infty}\frac{V(x_i,r_i;g)}{r_i^n}=0,
\end{align}
where $V(x_i,r_i;g)$ is the volume of geodesic ball $B(x_i,r_i;g)$ with respect to the meric $g$.

If there exists a compact set $K_0$ sucht that $x_i\in K_0$ for all $i$, then there exists a positive constant $c>0$ such that $R(x_i)>c$ for all $i$. Therefore, $r_i<\frac{1}{\sqrt{c}}$ for all $i\in \mathbb{N}$. Let $\pi:\hat{M}\to M$ and $\hat{K}_0=\pi^{-1}(K_0)$. Since $(\hat{M},\hat{g})$ is a smooth manifold with positive Ricci curvature, by volume comparison theorem, there exists a positive constant $c_1$ such that
\begin{align}
\frac{V(x,r;\hat{g})}{r^n}\ge c_1,~\forall~r\le \frac{1}{\sqrt{c}},~\forall x\in \hat{K}_0.\notag
\end{align}
Note that $\pi$ is a finite covering map. Then, we have 
\begin{align}
V(\hat{x}_i,r_i;\hat{g})\ge V(x_i.r_i;g)\ge \frac{1}{|\Gamma|}V(\hat{x}_i,r_i;\hat{g}),\notag
\end{align}
where $\hat{x}_i$ satisfies $\pi(\hat{x}_i)=x_i$.
It follows that 
\begin{align}
\frac{V(x_i,r_i;g)}{r_i^n}\ge \frac{1}{|\Gamma|}\cdot\frac{V(\hat{x}_i,r_i;\hat{g})}{r_i^n}\ge \frac{c_1}{|\Gamma|}\notag
\end{align}
It contradicts (\ref{limit=0}). Then, $\{x_i\}_{i\in\mathbb{N}}$ is unbounded.

We may suppose $x_i\to \infty$. By our assumption, $(M,R(x_i)g,x_i)$ converge to $(\mathbb{S}^{n-1}/\Gamma\times\mathbb{R},g_{\infty},x_{\infty})$, where $g_{\infty}=g_{\mathbb{S}^{n-1}/\Gamma}+ds^2$ and $g_{\mathbb{S}^{n-1}/\Gamma}$ has constant sectional curvature. By the convergence, we have 
\begin{align}
\lim_{i\to\infty}V(x_i,1;R(x_i)g)=V(x_{\infty},1;g_{\infty})\triangleq c_2>0.\notag
\end{align}
Then, there exists $i_0\in\mathbb{N}$ such that 
\begin{align}
V(x_i,1;R(x_i)g)>\frac{c_2}{2},~\forall~i\ge i_0.\notag
\end{align}
 Since the limit of $(M,R(x_i)g,x_i)$ is a smooth manifold, $B(x_i,1;R(x_i)g)$ also lies in the regular part of $M$ when $i$ large enough. Note the Ricci curvature is positive on $M$. By volume comparison theorem, we have
\begin{align}
\frac{V(x_i,r;R(x_i)g)}{r^n}>\frac{c_2}{2},~\forall~i\ge i_0,~\forall~0<r\le 1.\notag
\end{align}
Recall $R(x_i)r_i^2\le 1$. Now, by taking $r=r_i\sqrt{R(x_i)}$, we have 
\begin{align}
\frac{V(x_i,r_i;g)}{r_i^n}=\frac{V(x_i,r;R(p_i)g)}{r^n}>\frac{c_2}{2}.\notag
\end{align}
It contradicts (\ref{limit=0}). So, $(\hat{M},\hat{g},\hat{f})$ is $\kappa$-noncollapsed for some $\kappa>0$.

Now, we see that $(\hat{M},\hat{g},\hat{f})$ is a $\kappa$-noncollpased steady gradient Ricci soliton with nonnegative sectional curvature, positive Ricci curvature and linear curvature decay. By Theorem 5.4 in \cite{DZ5}, for any sequence of points $p_i$ tending to infinity, we have $(\hat{M},\hat{g}_i(t)),p_i)$ converge to $(\Sigma\times\mathbb{R},\hat{g}_{\infty}(t),p_{\infty})$, where $\hat{g}_{i}(t)=\hat{R}(p_i)\hat{g}(\hat{R}^{-1}(p_i)t)$ and $\hat{g}_{\infty}(t)=\hat{g}_{\Sigma}(t)+ds^2$.  Moreover, $\Sigma$ is diffeomorphic to the level set of $\hat{f}$ and the  scalar curvature $\hat{R}_{\Sigma}(x,t)$ of $(\Sigma,\hat{g}_{\Sigma}(t))$ satisfies
 \begin{align}\label{decay of t}
 \hat{R}_{\Sigma}(x,t)\le\frac{C}{|t|}, ~\forall~x\in \Sigma,
 \end{align}
 where $C$ is a uniform constant. 

We claim that $\hat{g}_{\Sigma}(t)$ is a group of metrics with constant sectional curvature and $\Sigma$ is diffeomorpic to $\mathbb{S}^{n-1}$. By Lemma 2.3 in \cite{DZ7}, we see that $\Sigma$ is diffeomorphic to $\mathbb{S}^{n-1}$. Following the argument in Theorem 3.24 of \cite{BCDMZ}, we can see that $(M,g,f)$ has positive curvature operator outside a compact set $K_1$. Therefore, $(\hat{M},\hat{g},\hat{f})$ also has positive curvature operator outside a compact set $K_2$ as long as $\pi^{-1}(K_1)\subset K_2$. So, $(\Sigma,\hat{g}_{\Sigma}(t))$ has nonnegative\footnote{By a modification of the  computation in Theorem 3.24 of \cite{BCDMZ}, one can also show that $(\Sigma,\hat{g}_{\Sigma}(t))$  has positive curvature operator.} curvature operator and $\Sigma$ is diffeomorpic to $\mathbb{S}^{n-1}$. Note the  scalar curvature $\hat{R}_{\Sigma}(x,t)$ satisfies estimate (\ref{decay of t}). By a work of Ni \cite{Ni}, we see that $\hat{g}_{\Sigma}(t)$ is a group of metrics with constant sectional curvature on $\mathbb{S}^{n-1}$.

Finally, we conclude that $(\hat{M},\hat{g}_i(t)),p_i)$ converge to $(\mathbb{S}^{n-1}\times\mathbb{R},\hat{g}_{\infty}(t),p_{\infty})$, where $\hat{g}_{i}(t)=\hat{R}(p_i)\hat{g}(\hat{R}^{-1}(p_i)t)$ and $\hat{g}_{\infty}(t)=g_{\mathbb{S}^{n-1}}(t)+ds^2$. So, $(\hat{M},\hat{g},\hat{f})$ is asymptotically cylindrical and has positive sectional curvature. By Brendle's work \cite{Br2}, $(\hat{M},\hat{g},\hat{f})$ is isomeric to the Bryant soliton.

\end{proof}

\section*{References}

\small

\begin{enumerate}

\renewcommand{\labelenumi}{[\arabic{enumi}]}

\bibitem{Appleton} Appleton, Alexander, \textit{A family of non-collapsed steady gradient Ricci solitons in even dimensions greater or equal to four}, arXiv:1708.00161.

\bibitem{Appleton2} Appleton, Alexander, \textit{Eguchi-Hanson singularities in U(2)-invariant Ricci flow}, Peking Math. J. \textbf{6} (2023), no. 1, 1-141.

\bibitem{Bam1} Bamler, Richard, \textit{Entropy and heat kernel bounds on a Ricci flow background}, arXiv:2008.07093.

\bibitem{Bam2} Bamler, Richard, \textit{Compactness theory of the space of super Ricci flows},  Invent. Math. \textbf{233} (2023), no. 3, 1121-1277.

\bibitem{Bam3} Bamler, Richard, \textit{Structure theory of non-collapsed limits of Ricci flows}, arXiv:2009.03243.

\bibitem{BCDMZ} Bamler, Richard; Chow, Bennett; Deng, Yuxing; Ma, Zilu; Zhang, Yongjia, \textit{Four-dimensional steady gradient Ricci solitons with $3$-cylindrical tangent flows at infinity},  Adv. Math. \textbf{401} (2022), Paper No. 108285, 21 pp. 

\bibitem{BK} Bamler, Richard H.; Kleiner, Bruce, \textit{On the rotational symmetry of 3-dimensional $\kappa$-solutions}, J. Reine Angew. Math. \textbf{779} (2021), 37-55.

\bibitem{Borz}Borzellino, J. \textit{Riemannian Geometry of Orbifolds}, PhD thesis, UCLA,
http://www.calpoly.edu/$\sim$jborzell/Publications/Publication$\%$20PDFs/dis.pdf (1992)

\bibitem{Br1} Brendle, Simon, \textit{Rotational symmetry of self-similar solutions to the Ricci flow}, Invent. Math. \textbf{194} No.3 (2013), 731-764.

\bibitem{Br2} Brendle, Simon, \textit{Rotational symmetry of Ricci solitons in higher dimensions}, J. Diff. Geom. \textbf{97} (2014), no. 2, 191-214.

\bibitem{Br3} Brendle, Simon, \textit{Ancient solutions to the Ricci flow in dimension 3}, Acta Math. \textbf{225} (2020) no.1, 1-102.

\bibitem{Br5} Brendle. Simon, \textit{Rotational symmetry of ancient solutions to the Ricci flow in dimension 3 -- The compact case}, arXiv:1904.07835.

\bibitem{Br4} Brendle, Simon, \textit{Singularity models in the three-dimensional Ricci flow}, Recent progress in mathematics, 87–118, KIAS Springer Ser. Math., 1, Springer, Singapore, (2022), $\copyright$2022.

\bibitem{BDS} Brendle, Simon; Daskalopulos, Panagiota; Sesum Natasa,
\textit{Uniqueness of compact ancient solutions to three-dimensional Ricci flow}, Invent. Math. \textbf{226} (2021), no. 2, 579–651. 

\bibitem{BDNS} Brendle, Simon; Daskalopoulos, Panagiota; Naff, Keaton; Sesum, Natasa \textit{Uniqueness of compact ancient solutions to the higher-dimensional Ricci flow}, J. Reine Angew. Math. \textbf{795} (2023), 85–138.

\bibitem{BN} Brendle, Simon; Naff, Keaton, \textit{Rotational symmetry of ancient solutions to the Ricci flow in higher dimensions},  Geom. Topol. \textbf{27} (2023), no. 1, 153–226.



\bibitem{CaCh} Cao, Huai-Dong; Chen, Qiang, \textit{On locally conformally flat gradient steady Ricci solitons},
Trans. Amer. Math. Soc., \textbf{364} (2012), 2377-2391.







\bibitem{Chow} Chow, Bennett \textit{Ricci solitons in low dimensions} Graduate Studies in Mathematics, 235. American Mathematical Society, Providence, RI, [2023], ©2023. xvi+339 pp. ISBN: [9781470474287].

\bibitem{CDM22} Chow, Bennett; Deng, Yuxing; Ma zilu. \textit{On four-dimensional steady gradient Ricci solitons that dimension reduce}, Adv. Math. 403 (2022), Paper No. 108367, 61pp.

\bibitem{DZ1} Deng, Yuxing and Zhu, Xiaohua, \textit{Complete non-compact gradient Ricci solitons with nonnegative Ricci curvature},  Math. Z., \textbf{279} (2015), no. 1-2, 211-226.




\bibitem{DZ6} Deng, Yuxing; Zhu, Xiaohua, \textit{Classification of  gradient steady Ricci solitons with linear curvature decay},  Sci. China Math. \textbf{63} (2020) no.1 135-154.

\bibitem{DZ5} Deng, Yuxing; Zhu, Xiaohua,  \textit{Higher dimensional steady gradient Ricci solitons with linear curvature decay}, J. Eur. Math. Soc.. \textbf{22} (2020) 4097-4120.

\bibitem{DZ7} Deng, Yuxing; Zhu, Xiaohua,  \textit{Steady Ricci solitons with horizontally $\epsilon$-pinched Ricci curvature}, Sci. China Math. \textbf{64} (2021) no.7 1411-1428.






\bibitem{H2} Hamilton, Richard, \textit{Eternal solutions to the Ricci flow}, J. Diff. Geom. \textbf{38} (1993), no. 1, 1-11.

\bibitem{Ham95} Hamilton, Richard, \emph{The formation of singularities in
the Ricci flow}, Surveys in differential geometry, Vol.\ II (Cambridge, MA,
1993), 7--136, Internat. Press, Cambridge, MA, 1995.

\bibitem{KL} Kleiner, Bruce; Lott, John \textit{Geometrization of three-dimensional orbifolds via Ricci flow}, Ast\'erisque No. 365 (2014), 101–177. ISBN: 978-2-85629-795-7.










\bibitem{Ni} Ni, Lei, \textit{Closed type-I Ancient solutions to Ricci flow}, Recent Advances in Geometric Analysis, ALM, vol. 11 (2009), 147-150.

\bibitem{Pe1} Perelman, Grisha, \textit{The entropy formula for the Ricci flow and its geometric applications}. arXiv:math/0211159, 2002.


\bibitem{zhang} Zhang, Zhuhong, \textit{On the completeness of gradient Ricci solitons,},
Proc. Amer. Math. Soc. \textbf{137} (2009), 2755-2759.

\end{enumerate}

\end{document}